\newcommand{\E}{\mathbb{E}}
\newcommand{\Prob}{\mathbb{P}}
\newcommand{\C}{\mathbb{C}}
\newcommand{\R}{\mathbb{R}}
\newcommand{\T}{^{\mbox{\begin{tiny}T\end{tiny}}}}
\newcommand{\ud}{\mathrm{d}}
\newcommand{\var}{\mathrm{Var}}
\newcommand{\tr}{\mathrm{Tr}}
\renewcommand\Re{\operatorname{Re}}
\renewcommand\Im{\operatorname{Im}}
\theoremstyle{plain}
  \newtheorem{theorem}{Theorem}
  \newtheorem{lemma}[theorem]{Lemma}
\theoremstyle{definition}
  \newtheorem{definition}[theorem]{Definition}
  \newtheorem{remark}[theorem]{Remark}
\newcommand{\entry}[5][G]{\ensuremath{#1_{#2,#3;#4,#5}}}
\begin{document}
\title{Products of Independent Non-Hermitian Random Matrices}
\author[S. O'Rourke]{Sean O'Rourke} \thanks{S. O'Rourke has been supported in part by NSF grants VIGRE DMS-0636297 and DMS-1007558}
\address{Department of Mathematics, University of California, Davis, One Shields Avenue, Davis, CA 95616-8633  }
\email{sdorourk@math.ucdavis.edu}

\author[A. Soshnikov]{Alexander Soshnikov}
\address{Department of Mathematics, University of California, Davis, One Shields Avenue, Davis, CA 95616-8633  }
\thanks{A.Soshnikov has been supported in part by NSF grant DMS-1007558}
\email{soshniko@math.ucdavis.edu}

\begin{abstract}
For fixed $m>1$, we consider $m$ independent $n \times n$ non-Hermitian random matrices $X_1, \ldots, X_m$ with i.i.d. centered entries with a finite \\
$(2+\eta)$-th moment, $\ \eta>0.\ $As $n$ tends to infinity, 
we show that the empirical spectral distribution of $n^{-m/2}\* X_1 X_2 \cdots X_m$ converges, with probability $1$, to a non-random, 
rotationally invariant distribution with compact support in the complex plane.  The limiting distribution is the $m$-th power of the circular law.   
\end{abstract}

\maketitle

\section{Introduction and Formulation of Results}

Many important results in random matrix theory pertain to Hermitian random matrices.  Two powerful tools used in this area are the moment method and the 
Stieltjes transform.  Unfortunately, these two techniques are not suitable for dealing with non-Hermitian random matrices, \cite{bai-book}.  

\subsection{The Circular Law}

One of the fundamental results in the study of non-Hermitian random matrices is the circular law.  We begin by defining the empirical spectral 
distribution (ESD).  

\begin{definition}
	Let $X$ be a matrix of order $n$ and let $\lambda_1, \ldots, \lambda_n$ be the eigenvalues of $X$.  Then the \emph{empirical spectral 
distribution} (ESD) $\mu_{X}$ of $X$ is defined as
\begin{equation*}
	\mu_{X}(z,\bar{z}) = \frac{1}{n} \# \left\{k\leq n : \Re\left(\lambda_k\right) \leq \Re(z); \Im\left(\lambda_k\right) \leq \Im(z) \right\}.
\end{equation*}
\end{definition}

Let $\xi$ be a complex random variable with finite non-zero variance $\sigma^2$ and let $N_n$ be a random matrix of order $n$ with entires being i.i.d. 
copies of $\xi$.  We say that the circular law holds for $\xi$ if, with probability $1$, the 
ESD $\mu_{\frac{1}{\sigma \sqrt{n}} N_n}$ of $\frac{1}{\sigma \sqrt{n}} N_n$ converges (uniformly) to the uniform distribution over the unit disk as $n$ 
tends to infinity.  

The circular law was conjectured in the 1950's as a non-Hermitian counterpart to Wigner's semi-circle law.  The circular law was first shown by 
Mehta in 1967 \cite{meh} when $\xi$ is complex Gaussian.  Mehta relied upon the joint density of the eigenvalues which was discovered by Ginibre 
\cite{gin} two years earlier.  

Building on the work of Girko \cite{girko}, Bai proved the circular law under the conditions that $\xi$ has finite sixth moment and that the joint 
distribution of the real and imaginary parts of $\xi$ has bounded density, \cite{bai}.  In \cite{bai-book}, 
the sixth moment assumption was weakened to $\E|\xi|^{2+\eta}$ for any specified $\eta>0$, but the bounded density assumption still remained.  
G\"{o}tze and Tikhomirov (\cite{gotze1}) proved the circular law in the case of i.i.d. sub-Gaussian matrix entries.
Pan and Zhou proved the circular law for any distribution $\xi$ with finite fourth moment \cite{pan} by building on \cite{gotze1} and
utilizing the work of Rudelson and Vershynin in \cite{ver}.  
In an important development, G\"{o}tze and Tikhomirov  showed in \cite{gotze} that the expected spectral distribution 
$\E\mu_{N_n}$ converges to the uniform distribution over the unit disk as 
$n$ tends to infinity assuming that
$ \sup_{jk} \E|(N_n)_{jk}|^2 \phi((N_n)_{jk}) < \infty, $ where $\phi(x)= (\ln(1+|x|))^{19+\eta}, \ \eta>0. $
In \cite{tao}, Tao and Vu proved the circular law assuming a bounded 
$(2+\eta)^{\text{th}}$ moment, for any fixed $\eta>0$.  Finally, Tao and Vu have been able to remove the extra $\eta $ in the moment condition.  Namely,
they proved the circular law in \cite{taovu} assuming only that the second moment is bounded.

\subsection{Main Results}
In this paper, we study the ESD of the product $$ \ X^{(n)} = X^{(n)}_1 X^{(n)}_2 \cdots X^{(n)}_m $$ 
of $m$ independent $n \times n$ non-Hermitian random matrices as $n$ tends to infinity.  
Burda, Janik, and Waclaw \cite{burda} studied the mathematical expectation of the limiting ESD, $\lim_{n \to \infty} \E \mu_X^{(n)}$ ,  
in the case that the entries of the matrices are Gaussian.  
Here we extend their results by proving the almost sure convergence of the ESD, $\mu_X^{(n)},$ for a class of non-Gaussian random matrices.  
Namely, we require that the entries of $X^{(n)}_i, \ i=1,\ldots, m, \ $ are i.i.d. random variables with a finite moment of order $2+\eta, \ \eta>0. $

\begin{theorem} \label{thm:main}
Fix $m>1$ and let $\xi$ be a complex random variable with variance $1$ such that $\Re(\xi)$ and $\Im(\xi)$ are independent each with mean zero and 
$\E|\xi|^{2+\eta} < \infty$ for some $\eta>0$.  Let $X^{(n)}_1, \ldots, X^{(n)}_m$ be independent random matrices of order $n$ where the entries of 
$X^{(n)}_j$ are i.i.d. 
copies of $\sigma_j \frac{\xi}{\sqrt{n}}$ for some collection of positive constants $\sigma_1,\ldots,\sigma_m$.  Then the ESD $\mu_X^{(n)}$ of 
$X^{(n)} = X^{(n)}_1 X^{(n)}_2 \cdots X^{(n)}_m$ converges, with probability $1$, as $n\rightarrow \infty$ to the distribution whose density is given by 
\begin{equation}
\label{plotnost}
	\rho(z,\bar{z}) = \left\{
     			\begin{array}{lr}
       				\frac{1}{m \pi} \sigma^{-\frac{2}{m}} |z|^{\frac{2}{m}-2} & \text{ for } |z|\leq \sigma, \\
       				0 & \text{ for } |z| > \sigma,
     			\end{array} \right.
\end{equation}
where $\sigma = \sigma_1 \cdots \sigma_m$.
\end{theorem}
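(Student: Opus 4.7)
The plan is to apply Girko's Hermitization method. For each $z\in\C$, let $\nu_{n,z}$ denote the empirical distribution of the singular values of $X^{(n)}-zI_n$ and set $U_n(z)=\tfrac12\int_0^\infty\log x\,d\nu_{n,z}(x)$, which is the logarithmic potential of $\mu_{X^{(n)}}$. A standard lemma reduces almost-sure weak convergence of $\mu_{X^{(n)}}$ to two ingredients: (i) for Lebesgue-a.e.\ $z\in\C$, $\nu_{n,z}$ converges weakly almost surely to a deterministic measure $\nu_z$, and (ii) the sequence $U_n$ is almost surely equi-integrable on compacts, which follows from a polynomial-in-$n$ lower bound on the smallest singular value $s_{\min}(X^{(n)}-zI_n)$ together with an operator norm bound on $X^{(n)}$. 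A routine $(2+\eta)$-moment truncation can be carried out at the outset.

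For (i), the plan is to linearize the product via the $mn\times mn$ block-cyclic matrix
\[
Y^{(n)}=\begin{pmatrix}0 & X^{(n)}_1 & & & \\ & 0 & X^{(n)}_2 & & \\ & & \ddots & \ddots & \\ & & & 0 & X^{(n)}_{m-1}\\ X^{(n)}_m & & & & 0\end{pmatrix}.
\]
The $m$-th power $(Y^{(n)})^m$ is block diagonal with blocks equal to the $m$ cyclic products of the $X^{(n)}_j$, all of which share the characteristic polynomial of $X^{(n)}$; hence each eigenvalue $\lambda$ of $X^{(n)}$ contributes the $m$ distinct values $\zeta\lambda^{1/m}$ ($\zeta^m=1$) to the spectrum of $Y^{(n)}$, and $\mu_{X^{(n)}}$ is the pushforward of $\mu_{Y^{(n)}}$ under $z\mapsto z^m$. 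A short polar-coordinate calculation shows that $\rho$ in \eqref{plotnost} is precisely the pushforward of the uniform probability measure on the disk $\{|z|\leq\sigma^{1/m}\}$ under the same map, so the theorem reduces to proving that $\mu_{Y^{(n)}}$ converges almost surely to the circular law of radius $\sigma^{1/m}$. This in turn is handled by Girko's method applied to $Y^{(n)}$ itself: the Stieltjes transform of the Hermitized matrix $(Y^{(n)}-wI_{mn})(Y^{(n)}-wI_{mn})^*$ satisfies, in the limit, a fixed-point system whose solution, combined with the symmetry of $Y^{(n)}$ under conjugation by $\mathrm{diag}(I_n,\omega I_n,\ldots,\omega^{m-1}I_n)$ with $\omega=e^{2\pi i/m}$, pins down the limiting singular value distribution and hence the limit eigenvalue distribution.

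The main technical obstacle will be (ii). The matrix $Y^{(n)}-wI_{mn}$ does \emph{not} have i.i.d.\ entries---most of its blocks are zero---so the Tao-Vu and Rudelson-Vershynin smallest singular value bounds cannot be applied directly. Two approaches seem viable: adapt the Rudelson-Vershynin small-ball machinery to the block-cyclic setting by analyzing inner products of individual rows against appropriate random candidate null vectors; or derive the bound via a Schur complement identity that expresses $s_{\min}(Y^{(n)}-wI_{mn})$ in terms of the smallest singular value of the shifted product $X^{(n)}_1\cdots X^{(n)}_m - w^m I_n$ together with the smallest singular values of the individual factors $X^{(n)}_j$, to which existing i.i.d.\ theory applies. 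In either case the target is a bound $s_{\min}(Y^{(n)}-wI_{mn})\geq n^{-B}$ with probability $1-o(1)$, uniformly over $w$ in compact sets; combined with the standard operator norm estimate $\|X^{(n)}_j\|=O(1)$ this yields the uniform integrability required to promote weak convergence of $\nu_{n,w}^Y$ to convergence of the log integrals. Once (i) and (ii) are established, $U_n\to U$ in $L^1_{\mathrm{loc}}(\C)$ almost surely and the identity $\mu=\tfrac{1}{2\pi}\Delta U=\rho$ follows from the pushforward computation, completing the proof.
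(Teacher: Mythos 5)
Your proposal follows essentially the same route as the paper: linearize the product via the block-cyclic matrix $Y^{(n)}$, prove the circular law for $Y^{(n)}$ by Girko's Hermitization (Stieltjes-transform analysis of $(Y^{(n)}-zI)^\ast(Y^{(n)}-zI)$), and recover \eqref{plotnost} as the pushforward of the circular law under $z\mapsto z^m$. Your second suggested route for the least singular value --- expressing blocks of $(Y^{(n)}-zI)^{-1}$ through shifted products of the factors and invoking the Tao--Vu bound with a deterministic (conditioned) shift --- is precisely how the paper handles that step, so the plan is sound and matches the published argument.
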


\begin{remark}
The almost sure convergence of $\mu_X^{(n)}$ implies the convergence of $\E \mu_X^{(n)}$ as well.
\end{remark}

\begin{remark}
We refer the reader to \cite{bai10} for bounds on powers of a square random matrix with i.i.d. entries.  See also \cite{alexeev1},
\cite{alexeev2}, \cite{burda1}, \cite{baimiao}, \cite{capitaine}, and \cite{speicher} for some other results on the spectral properties of 
products of random matrices.
\end{remark}

\section{Notation and Setup}

The proof of Theorem \ref{thm:main} is divided into two parts and presented in Sections \ref{section:proof} and \ref{section:circular}.  

We note that without loss of generality, we may assume $\sigma_1 = \sigma_2 = \cdots = \sigma_m = 1$.  Indeed, the spectrum for arbitrary $\sigma_1,\ldots,\sigma_m$ can be obtained by a trivial rescaling.  Following Burda, Janik, and Waclaw in \cite{burda}, we 
let $Y^{(n)}$ be a $(mn) \times (mn)$ matrix defined as
\begin{equation} \label{def-Y}
	Y^{(n)} = \left( \begin{array}{lllllc}
                         0 &  X^{(n)}_1 &     &            & 0       \\
                         0 & 0    & X^{(n)}_2 &            & 0        \\      
                           &      & \ddots & \ddots     &         \\
                         0 &      &     &          0 & X^{(n)}_{m-1} \\
                       X^{(n)}_m &      &     &            &  0      
	\end{array}\right).
\end{equation}

Section \ref{section:circular} will be devoted to proving that the ESD of $Y^{(n)}$ obeys the circular law as $n$ tends to infinity.  This statement is presented in the following Lemma.  

\begin{lemma} [$Y^{(n)}$ obeys the circular law] \label{lemma:Y-circular}
The ESD $\mu_{Y^{(n)}}$ of $Y^{(n)}$ converges, with probability $1$, to the uniform distribution over the unit disk as $n \rightarrow \infty$.
\end{lemma}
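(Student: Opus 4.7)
My plan is to follow the Girko Hermitization framework from Tao and Vu's proof of the circular law~\cite{tao}, adapting the main steps to the block structure of $Y^{(n)}$. Via Girko's logarithmic potential formula, convergence of $\mu_{Y^{(n)}}$ to the uniform measure on the unit disk reduces to showing, for Lebesgue-almost every $z \in \C$:
\begin{enumerate}[(i)]
\item the empirical singular value distribution $\nu_{n,z}$ of $Y^{(n)} - zI$ converges almost surely to a deterministic measure $\nu_z$ with $\int \log(s)\, d\nu_z(s)$ equal to the logarithmic potential of the uniform disk measure at $z$; and
\item $s_{\min}(Y^{(n)} - zI) \geq n^{-A}$ with probability $1 - o(1)$ for some constant $A$, which together with the Hilbert--Schmidt bound $\|Y^{(n)} - zI\|_{HS}^2 = O(n)$ ensures uniform integrability so that $\int \log(s)\, d\nu_{n,z}(s) \to \int \log(s)\, d\nu_z(s)$.
\end{enumerate}

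For (i), I would derive a self-consistent equation for the Stieltjes transform of $\nu_z$. Writing $Y^{(n)} = \sum_{j=1}^{m} E_{j, j+1} \otimes X_j^{(n)}$ with cyclic indexing modulo $m$, the Hermitization of $Y^{(n)} - zI$ is a $2mn \times 2mn$ Hermitian matrix with a natural $(2m)\times(2m)$ block structure. Inverting its resolvent block by block via Schur complements yields a closed system of $m$ coupled equations for the normalized block traces; the row-wise independence within each i.i.d.\ block $X_j^{(n)}$ provides the concentration needed for the empirical Stieltjes transform to converge to the deterministic solution. The block-structured moments $\E \tr((Y^{(n)} - zI)^*(Y^{(n)} - zI))^k$ can be computed combinatorially and match those of an $(mn) \times (mn)$ i.i.d.\ Gaussian matrix of variance $1/(mn)$ minus $zI$; consequently $\nu_z$ coincides with the Ginibre singular value limit at spectral parameter $z$, which is known to have the required logarithmic potential~\cite{tao}.

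The main obstacle is step (ii), the polynomial lower bound on $s_{\min}(Y^{(n)} - zI)$. The rows of $Y^{(n)}$ are independent, so the Rudelson--Vershynin and Tao--Vu machinery of \cite{ver, tao} applies in principle, but care is needed because most entries of each row are deterministic zeros. I would decompose the unit sphere in $\C^{mn}$ into compressible and incompressible parts. For a vector $v = e_j \otimes u$ localized in a single block, $(Y^{(n)} - zI)v$ decomposes into two orthogonal block components, giving the deterministic identity $\|(Y^{(n)} - zI) v\|^2 = |z|^2 \|u\|^2 + \|X_k^{(n)} u\|^2$ for $k = j-1 \pmod m$; this yields the required lower bound when $z \ne 0$, and at $z = 0$ reduces to the classical polynomial lower bound on $s_{\min}(X_k^{(n)})$ for an i.i.d.\ matrix. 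For general incompressible vectors, block-wise small-ball probability estimates using the $(2+\eta)$-moment hypothesis on $\xi$, combined with a standard $\epsilon$-net argument over the sphere, produce the desired polynomial lower bound and complete the proof.
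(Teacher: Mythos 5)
Your high-level skeleton (Girko Hermitization: convergence of the singular value laws of $Y^{(n)}-zI$ plus a polynomial lower bound on the least singular value) is the same as the paper's, but the two technical steps are carried out differently and step (ii) has a genuine gap as written. For (i), the paper does not use moment matching; it derives, via the resolvent and a decoupling (cumulant) formula, the cubic self-consistent equation $\Delta^3+2\Delta^2+\frac{\alpha+1-|z|^2}{\alpha}\Delta+\frac1\alpha=0$ for the Stieltjes transform of $\nu_n(\cdot,z)$, together with McDiarmid-type concentration and an explicit rate of convergence uniform in $z$ (needed to control the logarithmic singularities). Your conclusion that the limit coincides with Bai's circular-law singular value distribution is correct, but the claim that $\E\tr\bigl((Y^{(n)}-zI)^*(Y^{(n)}-zI)\bigr)^k$ ``can be computed combinatorially and match'' the i.i.d.\ case is asserted rather than proved, and with only a $(2+\eta)$-th moment a moment computation requires truncation anyway; this part of your plan is a sketch, not an argument.

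The real gap is (ii). Your deterministic identity $\|(Y^{(n)}-zI)v\|^2=|z|^2\|u\|^2+\|X_k^{(n)}u\|^2$ only covers vectors supported in a single block, which is the easy case. For a general unit vector $v=(v_1,\dots,v_m)$, the rows in block-row $a$ involve only the component $v_{a+1}$ (through $X_a^{(n)}$) plus the deterministic term $-z$ times a single coordinate of $v_a$; the row-wise small-ball probabilities therefore degenerate whenever $\|v_{a+1}\|$ is small while the mass of $v$ sits in other blocks, and one must track this cyclic coupling between blocks while running the compressible/incompressible decomposition and the $\epsilon$-net tensorization with only a $(2+\eta)$-th moment. ``Block-wise small-ball estimates plus a standard $\epsilon$-net'' does not resolve this; it is exactly the hard part. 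The paper sidesteps it entirely: each block of $(Y-zI)^{-1}$ is written explicitly as $z^\kappa X_{j_1}\cdots X_{j_l}\left(X_{i_1}\cdots X_{i_q}-z^r\right)^{-1}$, one factors $\left(X_{i_1}\cdots X_{i_q}-z^r\right)^{-1}=X_{i_q}^{-1}\cdots X_{i_2}^{-1}\left(X_{i_1}-z^rX_{i_q}^{-1}\cdots X_{i_2}^{-1}\right)^{-1}$, and Theorem \ref{thm:tao-vu} is applied to the single i.i.d.\ matrix $X_{i_1}$ conditionally on the other, independent factors (whose inverse norms are themselves controlled by the same theorem), so no Littlewood--Offord analysis for the structured matrix $Y-zI$ is ever needed. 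Finally, note that a bound holding ``with probability $1-o(1)$'' is too weak for the almost-sure statement: you need probabilities $O(n^{-A})$ with $A$ large enough to sum, as in Theorem \ref{thm:singular-value}, so that Borel--Cantelli applies.
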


\section{Proof of Theorem \ref{thm:main}} \label{section:proof}

With Lemma \ref{lemma:Y-circular} above, we are ready to prove Theorem \ref{thm:main}.  

\begin{proof}[Proof of Theorems \ref{thm:main}]
Using the definition of $Y^{(n)}$ in \eqref{def-Y}, we can compute
\begin{equation*}
	\left(Y^{(n)}\right)^m = \left( \begin{array}{cccc}
                          Y_1 &      &  &         0 \\
                              &  Y_2 &  &          \\      
                              &      &  \ddots  &         \\
                           0  &      &          & Y_m 
                  
	\end{array}\right),
\end{equation*}
where $Y_k = X^{(n)}_k X^{(n)}_{k+1} \cdots X^{(n)}_m X^{(n)}_1 \cdots X^{(n)}_{k-1}$ for $1 \leq k \leq m$.  Notice that each $Y_k$ has the same eigenvalues as $X^{(n)}$.  Let $\lambda_1,\ldots,\lambda_n$ denote the eigenvalues of $X^{(n)}$ and let $\eta_1,\ldots,\eta_{mn}$ denote the eigenvalues of $Y^{(n)}$.  Then it follows that each $\lambda_k$ is an eigenvalue of $\left(Y^{(n)}\right)^m$ with multiplicity $m$.  

Let $f:\C \rightarrow \C$ be a continuous, bounded function.  Then we have
\begin{align*}
	\int_{\C} f(z) \ud \mu_{X^{(n)}}(z,\bar{z}) =& \frac{1}{n} \sum_{k=1}^n f(\lambda_k) = \frac{1}{mn} \sum_{k=1}^{mn} f(\eta^m_k) = \int_{\C}f(z^m) \ud \mu_{Y^{(n)}}(z, \bar{z}).
\end{align*}
By Lemma \ref{lemma:Y-circular},
\begin{equation*}
	\int_{\C}f(z^m) \ud \mu_{Y^{(n)}}(z, \bar{z}) \longrightarrow \frac{1}{\pi}\int_{\mathbb{D}}f(z^m) \ud z \ud \bar{z} \qquad \text{a.s.}
\end{equation*}
as $n \rightarrow \infty$ where $\mathbb{D}$ denotes the unit disk in the complex plane.  Thus, by the change of variables $z \mapsto z^{m}$ and $\bar{z} \mapsto \bar{z}^{m}$ we can write
\begin{align*}
		\frac{1}{\pi}\int_{\mathbb{D}}f(z^m) \ud z \ud \bar{z} = \frac{m}{\pi}\int_{\mathbb{D}}f(z) \frac{1}{m^2} \left|z\right|^{\frac{2}{m}-2} \ud z \ud \bar{z} = \frac{1}{m \pi} \int_{\mathbb{D}} f(z) \left|z\right|^{\frac{2}{m}-2} \ud z \ud \bar{z}.
\end{align*}
where the factor of $m$ out front of the integral corresponds to the fact that the transformation maps the complex plane $m$ times onto itself.  

Therefore, we have shown that for all continuous, bounded functions $f$, 
\begin{equation*}
	\int_{\C} f(z) \ud \mu_{X^{(n)}}(z,\bar{z}) \longrightarrow \frac{1}{m \pi} \int_{\mathbb{D}} f(z) \left|z\right|^{\frac{2}{m}-2} \ud z \ud \bar{z} \qquad \text{a.s.}
\end{equation*}
as $n \rightarrow \infty$ and the proof is complete. 
\end{proof}

\section{Proof of Lemma \ref{lemma:Y-circular}} \label{section:circular}

In order to prove that the ESD of $Y^{(n)}$ obeys the circular law, we follow the work of Bai in \cite{bai}, Bai and Silverstein in \cite{bai-book}, and 
use the results developed by Tao and Vu in \cite{tao}.  To do so, we introduce the following notation.  Let $\mu_n$ denoted the ESD of $Y^{(n)}$.  That is,
\begin{equation*}
	\mu_n(x,y) = \frac{1}{mn} \# \left\{k \leq mn : \Re(\lambda_k) \leq x; \Im(\lambda_k) \leq y\right\}
\end{equation*}
where $\lambda_1, \ldots, \lambda_{mn}$ are the eigenvalues of $Y^{(n)}$.  

An important idea in the proof is to analyze the Stieltjes transformation $s_n: \C \rightarrow \C$ of $\mu_n$ defined by 
\begin{equation*}
	s_n(z) = \frac{1}{mn} \sum_{k=1}^{mn} \frac{1}{\lambda_k -z} = \int_{\C} \frac{1}{x+iy-z} \ud \mu_n(x,y).
\end{equation*}
Since $s_n(z)$ is analytic everywhere except the poles, the real 
part determines the eigenvalues.  Let $z=s+it$.  Then we can write
\begin{align*}
	\Re(s_n(z)) =& \frac{1}{mn}\sum_{k=1}^{mn} \frac{\Re(\lambda_k)-s}{|\lambda_k - z|^2} \\
		=& -\frac{1}{2mn} \sum_{k=1}^{mn} \frac{\partial}{\partial s} \ln |\lambda_k - z|^2 \\
		=& -\frac{1}{2} \frac{\partial}{\partial s} \int_0^\infty \ln x \nu_n(\ud x, z)
\end{align*}
where $\nu_n(\cdot, z)$ is the ESD of the Hermitian matrix $H_n = (Y^{(n)}-zI)^\ast(Y^{(n)}-zI)$.  This reduces the task to controlling the 
distributions $\nu_n$.  

The main difficulties arise from the two poles of the log function, at $\infty$ and $0$.  We will need to use the bounds developed in \cite{bai} 
and \cite{tao} to control the largest singular value and the least singular value of $Y^{(n)}-zI$.  

A version of the following lemma was first presented by Girko, \cite{girko}.  We present a slightly refined version by Bai and Silverstein, 
\cite{bai-book}.  

\begin{lemma} \label{lemma:girko}
For any $uv \neq 0$, we have
\begin{align} 
	c_n(u,v) =& \int \int e^{iux + ivy} \mu_n(\ud x, \ud y) \nonumber \\
		\label{girko-identity}
		=& \frac{u^2 + v^2}{4iu\pi} \int \int \frac{\partial}{\partial s} \left[ \int_0^\infty \ln x \nu_n(\ud x, z) \right] e^{ius+ivt} \ud t \ud s,
\end{align}
where z=s+it.
\end{lemma}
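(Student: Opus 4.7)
The plan is to derive the identity by using the fact that $\frac{1}{4\pi}\ln|\cdot|^2$ is the fundamental solution of the Laplacian on $\R^2$, i.e.\
$$\Delta_{s,t}\ln|(s+it) - w|^2 = 4\pi\,\delta_w(s,t)$$
in the sense of distributions, and then applying this identity to the test function $e^{ius+ivt}$, which satisfies $\Delta e^{ius+ivt} = -(u^2+v^2)e^{ius+ivt}$. First, for each eigenvalue $\lambda_k$ of $Y^{(n)}$, formal integration by parts twice (transferring the Laplacian from $\ln|\lambda_k - (s+it)|^2$ onto the exponential) yields
$$e^{iu\Re\lambda_k + iv\Im\lambda_k} = -\frac{u^2+v^2}{4\pi}\int\int e^{ius+ivt}\ln|\lambda_k - (s+it)|^2\, \ud s\, \ud t.$$
A further single integration by parts in $s$ alone, using $e^{ius} = (iu)^{-1}\partial_s e^{ius}$, converts the prefactor to $(u^2+v^2)/(4iu\pi)$ and moves $\partial/\partial s$ onto the logarithm.

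Next, I would average the resulting identity over $k=1,\ldots,mn$. The left-hand side becomes precisely $c_n(u,v)$, while the bracketed expression on the right-hand side can be rewritten via
$$\frac{1}{mn}\sum_{k=1}^{mn} \ln|\lambda_k - z|^2 = \frac{1}{mn}\ln|\det(Y^{(n)} - zI)|^2 = \frac{1}{mn}\ln\det H_n = \int_0^\infty \ln x\, \nu_n(\ud x, z),$$
using the definition of $\nu_n$ as the ESD of $H_n = (Y^{(n)}-zI)^*(Y^{(n)}-zI)$. Combining these steps gives exactly \eqref{girko-identity}.

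The main obstacle is that none of the integrals above is absolutely convergent: the test function $e^{ius+ivt}$ does not decay at infinity, and $\ln|z - \lambda_k|^2$ is singular at $z = \lambda_k$ and grows logarithmically at infinity, so every integration-by-parts step and the appeal to the distributional identity must be carefully justified. I would handle this by introducing a smooth cutoff $\chi_R$ supported in the large ball $|s+it|\leq R$ together with a mollification $\ln(|z-\lambda_k|^2 + \varepsilon^2)$ of the log, carrying out the calculation in that regularized setting where all boundary terms are explicit, and then passing to the limits $\varepsilon \to 0$ and $R \to \infty$. The rapid oscillation of $e^{ius+ivt}$ causes the surface contributions to vanish in the sense of tempered distributions precisely when $uv \neq 0$, which is exactly the hypothesis of the lemma. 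This is the argument given in Girko's original work and refined by Bai and Silverstein in \cite{bai-book}, and it carries over verbatim to our block matrix $Y^{(n)}$.
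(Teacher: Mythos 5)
The paper offers no proof of this lemma at all; it is quoted as a known identity of Girko, in the refined form of Bai and Silverstein \cite{bai-book}. Your derivation is the standard argument behind that citation --- the distributional identity $\Delta_{s,t}\ln|z-\lambda_k|^2=4\pi\delta_{\lambda_k}$ applied to $e^{ius+ivt}$, one further integration by parts in $s$, and the identification $\frac{1}{mn}\sum_k\ln|\lambda_k-z|^2=\int_0^\infty\ln x\,\nu_n(\ud x,z)$ via $|\det(Y-zI)|^2=\det H_n$ --- with the constants, the role of the hypothesis $uv\neq 0$, and the deferral of the conditional-convergence details to the same sources all in order, so it is consistent with the paper's treatment.
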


We note that the singular values of $Y^{(n)}$ are the union of the singular values of $X^{(n)}_k$ for $1 \leq k\leq n$.  Thus, under the assumptions of Theorem \ref{thm:main}, the ESD of $Y^{(n)^\ast} Y^{(n)}$ converges to the Marchenko-Pastur Law (see \cite{mp} and \cite[Theorem 3.7]{bai-book}).  Thus by Lemma \ref{lemma:eigenvalue-singular-bound} it follows that, with probability $1$, the family of distributions $\mu_n$ is tight.  To prove the circular law we will show that the right-hand side of \eqref{girko-identity} converges to $c(u,v)$, its counterpart generated by the circular law, for all $uv \neq 0$.  Several steps of the proof will follow closely the work of Bai in \cite{bai} and Bai and Silverstein in \cite{bai-book}.  We present an outline of the proof as follows.  
\begin{enumerate}
\item We reduce the range of integration to a finite rectangle in Section \ref{section:integral}.  We will show that the proof reduces to showing that, for every large $A>0$ and small $\epsilon>0$,
\begin{align*}
	&\int \int_T \left[ \frac{\partial}{\partial s} \int_0^\infty \ln x \nu_n(\ud x, z) \right] e^{ius+ivt} \ud s \ud t \\
	\rightarrow & \int \int_T \left[ \frac{\partial}{\partial s} \int_0^\infty \ln x \nu(\ud x, z) \right] e^{ius+ivt} \ud s \ud t \\
\end{align*}
where $T=\{(s,t): |s| \leq A, |t| \leq A^3, |\sqrt{s^2+t^2}-1| \geq \epsilon\}$ and $\nu(x,z)$ is the limiting spectral distribution of the sequence of matrices $H_n = (Y^{(n)}-zI)^\ast(Y^{(n)}-zI)$.  
\item We characterize the limiting spectrum $\nu(\cdot, z)$ of $\nu_n(\cdot,z)$.
\item We establish a convergence rate of $\nu_n(\cdot,z)$ to $\nu(\cdot, z)$ uniformly in every bounded region of $z$.
\item Finally, we show that for a suitably defined sequence $\epsilon_n$, with probability $1$, 
\begin{align*}
	\limsup_{n \rightarrow \infty} \int \int_T \left| \int_{\epsilon_n}^\infty \ln x(\nu_n(\ud x, z) - \nu(\ud x, z)) \right| &= 0
\end{align*}
and
\begin{align*}
	\lim_{n \rightarrow \infty} \int_0^{\epsilon_n} \ln x \nu_n(\ud x, z) &= 0.
\end{align*}
\end{enumerate}

\subsection{Notation}

In this section, we introduce some notation that we will use throughout the paper.  

First, we will drop the superscript $(n)$ from the matrices $Y^{(n)}$, $X^{(n)}$, $X_1^{(n)}, \ldots,  X_m^{(n)}$ and simply write $Y$, $X$, $X_1,\ldots,X_m$.  

We write $R=Y-zI$ where $I$ is the identity matrix and $z=s+it \in \C$.  We will continue to let $H_n = (Y-zI)^\ast(Y-zI) = R^\ast R$ and have $\nu_n(x,z)$ denote the empirical spectral distribution of $H_n$ for each fixed $z$.  

For a $(mn) \times (mn)$ matrix $A$, there are $m^2$ blocks each consisting of a $n \times n$ matrix.  We let $A_{ab}$ denote the $n \times n$ matrix in position $a,b$ where $1 \leq a,b \leq m$.  \entry[A]{a}{b}{i}{j} then refers to the element $(A_{ab})_{ij}$ where $1 \leq i,j \leq n$.

Finally, $C$ will be used as some positive constant that may change from line to line.  

\subsection{Integral Range Reduction} \label{section:integral}

To establish Lemma \ref{lemma:Y-circular}, we need to find the limiting counterpart to
\begin{equation*}
	g_n(s,t) = \frac{\partial}{\partial s} \int_0^\infty \ln x \nu_n(\ud x, z).
\end{equation*}

We begin by presenting the following lemmas.

\begin{lemma}[Bai-Silverstein \cite{bai-book}]
For all $uv \neq 0$, we have
\begin{equation*}
	c(u,v) = \frac{1}{\pi} \int \int_{x^2 + y^2 \leq 1} e^{iux + ivy} \ud x \ud y = \frac{u^2 + v^2}{4iu\pi} \int \left[ \int g(s,t) e^{ius+ivt} \ud t \right] \ud s,
\end{equation*}
where 
\begin{equation*}
	g(s,t) =  \left\{
		\begin{array}{rl}
 		\frac{2s}{s^2+t^2}, & \text{if } s^2 + t^2 > 1\\
  		2s, & \text{otherwise} \\
		\end{array} \right.
\end{equation*}
\end{lemma}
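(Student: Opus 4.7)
The first equality merely restates the definition of $c(u,v)$ as the characteristic function of the uniform probability measure $\mu$ on the unit disk $\mathbb{D}$; the content of the lemma is the second equality, which is morally Girko's identity (Lemma \ref{lemma:girko}) applied to this limiting measure rather than to the empirical measure $\mu_n$, and which I would prove directly.

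The plan is to recognize $g(s,t)$ as $\partial_s[2U(s,t)]$, where
$$U(s,t) \;=\; \frac{1}{\pi}\int_{\mathbb{D}} \ln\bigl|(s+it) - w\bigr|\, dA(w)$$
is the logarithmic potential of $\mu$. Outside the closed unit disk, the mean value property (applied to the function $w \mapsto \ln|(s+it)-w|$, which is harmonic there) gives $U(s,t) = \tfrac{1}{2}\ln(s^2+t^2)$. Inside, Poisson's equation $\Delta U = 2$ together with continuity across $|z|=1$ forces $U(s,t) = \tfrac{1}{2}(s^2+t^2-1)$. Differentiating in $s$ then recovers exactly the piecewise formula for $g$ in the statement; moreover the two pieces of $U$ match in a $C^{1}$ fashion across $|z|=1$, so no boundary jump appears.

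Once this identification is in hand, the remaining task is a Fourier-analytic computation. Integration by parts in $s$ yields
$$\int\!\!\int g(s,t)\, e^{ius+ivt}\, ds\, dt \;=\; -2iu \int\!\!\int U(s,t)\, e^{ius+ivt}\, ds\, dt,$$
and Poisson's equation $\Delta U = 2 \cdot \mathbf{1}_{\mathbb{D}}$ (via two further integrations by parts) converts the right-hand side to
$$-(u^2+v^2) \int\!\!\int U(s,t)\, e^{ius+ivt}\, ds\, dt \;=\; 2 \int_{\mathbb{D}} e^{ius+ivt}\, ds\, dt \;=\; 2\pi\, c(u,v).$$
Combining these two identities produces the claimed formula $c(u,v) = \frac{u^{2}+v^{2}}{4iu\pi} \int\!\int g(s,t)\, e^{ius+ivt}\, ds\, dt$.

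The main obstacle will be the rigorous justification of these Fourier manipulations, since $U$ grows like $\ln|z|$ at infinity and is therefore not in $L^{1}(\R^{2})$. The standard remedy is to introduce a smooth compactly supported cutoff $\chi_{R}$ with $\chi_{R}\equiv 1$ on $\{|z|\le R\}$, perform all integrations by parts for the truncated function $\chi_{R} U$, and then send $R \to \infty$; the boundary contributions can be shown to vanish because the oscillatory factor $e^{ius+ivt}$ (here one uses $uv\ne 0$) controls the mild logarithmic growth of $U$. Alternatively, one can phrase the entire argument in the language of tempered distributions, in which case all manipulations are automatic.
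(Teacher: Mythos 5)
Your proposal is essentially correct, and it is worth noting that the paper itself gives no proof of this statement: it is quoted verbatim from Bai--Silverstein \cite{bai-book}, where it plays the role of the deterministic counterpart of Girko's identity (Lemma \ref{lemma:girko}) for the limiting circular law. Your reconstruction captures exactly the structure that makes the lemma true and useful: $g(s,t)=\partial_s\bigl[2U(s,t)\bigr]$ with $U$ the logarithmic potential of the uniform measure on $\mathbb{D}$ (equivalently, $g=\frac{\partial}{\partial s}\int_0^\infty \ln x\,\nu(\ud x,z)$ in the paper's notation, cf.\ Lemma \ref{lemma:nu-circular}), the mean value property outside the disk, the Dirichlet problem $\Delta U=2$, $U=0$ on $|z|=1$ inside, the $C^1$ matching across the circle, and then the Fourier identities $\widehat{\partial_s(2U)}=-2iu\,\widehat U$ and $-(u^2+v^2)\widehat U=\widehat{\Delta U}=2\widehat{\mathbf 1_{\mathbb D}}=2\pi c(u,v)$, which combine to the stated formula with the correct constant $\frac{u^2+v^2}{4iu\pi}$. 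The one point that genuinely needs care, and which you correctly flag, is that the right-hand side of the lemma is an \emph{iterated} integral that converges only conditionally (the inner $t$-integral produces a factor decaying like $e^{-|v||s|}$, which is where $v\neq 0$ enters, while $u\neq 0$ is needed for the $1/u$ factor and the integration by parts in $s$); so one must either carry out the cutoff argument honestly, with the boundary terms killed by the oscillation and the exponential decay in $|s|$, or do the $t$-integration explicitly first and only then integrate by parts in $s$. Bai and Silverstein's own derivation is closer to the latter, more computational route, whereas your potential-theoretic/distributional phrasing is cleaner and makes the connection with Lemma \ref{lemma:nu-circular} transparent; either way the argument is sound.
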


\begin{lemma}[Horn-Johnson \cite{horn}] \label{lemma:eigenvalue-singular-bound}
Let $\lambda_j$ and $\eta_j$ denote the eigenvalues and singular values of an $n \times n$ matrix $A$, respectively.  Then for any $k \leq n$,
\begin{equation*}
	\sum_{j=1}^k |\lambda_j|^2 \leq \sum_{j=1}^k \eta^2
\end{equation*}
if $\eta_j$ is arranged in descending order.
\end{lemma}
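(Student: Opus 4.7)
The plan is to derive the stronger Weyl log-majorization inequality $\prod_{j=1}^k |\lambda_j| \leq \prod_{j=1}^k \eta_j$ (with both sequences in decreasing order) for every $k \leq n$, and then obtain the stated additive inequality by a standard majorization argument.

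For the log-majorization step, I would work with the $k$-th compound (exterior power) matrix $C_k(A) = \wedge^k A$, which is a linear operator on the $\binom{n}{k}$-dimensional space $\wedge^k \C^n$. Its eigenvalues are exactly the products $\lambda_{i_1} \cdots \lambda_{i_k}$ over $k$-subsets of indices, and its singular values are the corresponding products $\eta_{i_1} \cdots \eta_{i_k}$ of singular values of $A$. Since the spectral radius of any matrix is bounded above by its operator norm, applying this fact to $C_k(A)$ gives $|\lambda_1 \cdots \lambda_k| \leq \eta_1 \cdots \eta_k$, which is Weyl's multiplicative inequality. As a sanity check, the $k = n$ trace version also follows directly from Schur's triangularization $A = UTU^{\ast}$: one has $\sum_j \eta_j^2 = \|A\|_F^2 = \|T\|_F^2 = \sum_j |\lambda_j|^2 + \sum_{i<j} |t_{ij}|^2 \geq \sum_j |\lambda_j|^2$, giving the inequality at $k=n$ without any compound-matrix machinery.

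To upgrade log-majorization to the additive form, I would invoke the standard lemma that if positive decreasing sequences $(a_j)$ and $(b_j)$ satisfy $\prod_{j=1}^k a_j \leq \prod_{j=1}^k b_j$ for every $k$, then $\sum_{j=1}^k \phi(a_j) \leq \sum_{j=1}^k \phi(b_j)$ for every function $\phi$ such that $t \mapsto \phi(e^t)$ is increasing and convex on $\R$. Choosing $\phi(x) = x^2$, so that $\phi(e^t) = e^{2t}$ is manifestly increasing and convex, yields precisely $\sum_{j=1}^k |\lambda_j|^2 \leq \sum_{j=1}^k \eta_j^2$, as desired.

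The main obstacle is verifying the eigenvalue and singular-value structure of the compound matrix $C_k(A)$; this is a classical piece of multilinear algebra but requires a short computation using the multiplicativity $C_k(BC) = C_k(B) C_k(C)$ together with a diagonalization (or Schur triangularization) of $A$ and of $A^{\ast} A$. A minor subtlety is that when $A$ has vanishing eigenvalues the multiplicative inequality becomes degenerate on the logarithmic scale, but this is painlessly handled by perturbing $A$ to an invertible matrix and passing to the limit, using that both sides of the additive inequality are continuous in $A$.
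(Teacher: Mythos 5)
Your proposal is correct. The paper gives no proof of this lemma at all---it is quoted verbatim from Horn--Johnson---and your argument is exactly the classical one found there: Weyl's multiplicative inequality $\prod_{j=1}^k|\lambda_j|\leq\prod_{j=1}^k\eta_j$ via the spectral radius versus operator norm of the $k$-th compound matrix, followed by the standard transfer from weak log-majorization to weak majorization under a function $\phi$ with $\phi(e^t)$ increasing and convex, applied with $\phi(x)=x^2$ (the zero-eigenvalue degeneracy being handled by perturbation and continuity, as you note). The only points worth making explicit are that the eigenvalues must also be arranged in decreasing modulus for the compound-matrix step to identify $|\lambda_1\cdots\lambda_k|$ with the spectral radius of $C_k(A)$ (the unordered statement then follows a fortiori), and that the paper's display contains a typo: $\sum_{j=1}^k\eta^2$ should read $\sum_{j=1}^k\eta_j^2$.
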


\begin{lemma}[Bai-Silverstein \cite{bai-book}] \label{lemma:integral-bounds}
For any $uv \neq 0$ and $A > 2$, we have
\begin{equation*}
	\left| \int_{|s| \geq A} \int_{-\infty}^{\infty} g_n(s,t) e^{ius+ivt} \ud t \ud s \right| \leq \frac{4\pi}{|v|} e^{-\frac{1}{2}|v|A} + \frac{2\pi}{n |v|} \sum_{k=1}^{mn} I\left( |\lambda_k | \geq \frac{A}{2} \right)
\end{equation*}
and
\begin{equation*}
	\left| \int_{|s| \leq A} \int_{t \geq A^3} g_n(s,t) e^{ius+ivt} \ud t \ud s \right| \leq \frac{8A}{A^2-1} + \frac{4 \pi A}{n} \sum_{k=1}^{mn} I(|\lambda_k| > A)
\end{equation*}
where $\lambda_1, \ldots, \lambda_{mn}$ are the eigenvalues of $Y$.  Furthermore, if the function $g_n(s,t)$ is replaced by $g(s,t)$, the two inequalities above hold without the second terms.
\end{lemma}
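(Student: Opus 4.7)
The plan is to work from the explicit eigenvalue expansion
\[ g_n(s,t) = \frac{2}{mn}\sum_{k=1}^{mn} \frac{s-\Re\lambda_k}{|\lambda_k - z|^2}, \]
which follows from the derivation of $\Re(s_n)$ given earlier. This reduces each integral estimate to a sum of two-dimensional integrals, one per eigenvalue of $Y$. The key analytic input is the Poisson identity
\[ \int_{-\infty}^\infty \frac{a}{a^2+u^2}\,e^{ivu}\,\ud u \;=\; \pi\,\operatorname{sign}(a)\,e^{-|v|\,|a|}, \]
valid for $a\in\R\setminus\{0\}$ and $v\ne 0$, which converts the inner $t$-integral into an exponentially decaying, sign-carrying expression in $|s-\Re\lambda_k|$.

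For the first inequality I would split the sum over $k$ according to whether $|\lambda_k|<A/2$ or $|\lambda_k|\ge A/2$. For the ``small'' eigenvalues the Poisson identity evaluates the inner integral exactly, and the constraint $|s|\ge A$ forces $|s-\Re\lambda_k|\ge A/2$; the subsequent $s$-integration then contributes a factor bounded by $\tfrac{2}{|v|}e^{-|v|A/2}$ per eigenvalue. Collecting the $\tfrac{2}{mn}$ prefactor and summing over at most $mn$ eigenvalues yields the leading term $\tfrac{4\pi}{|v|}e^{-|v|A/2}$. For the ``large'' eigenvalues I would drop all sign information and use the trivial envelope estimate that the $t$-Fourier integral has modulus at most $\pi$, together with $\int_{|s|\ge A}e^{-|v|\,|s-\Re\lambda_k|}\ud s\le 2/|v|$; combined with $\tfrac{2}{mn}$ and the hypothesis $m\ge 2$, this produces the $\tfrac{2\pi}{n|v|}\sum_k I(|\lambda_k|\ge A/2)$ term.

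For the second inequality I would again split by $|\lambda_k|\le A$ versus $|\lambda_k|>A$. For ``small'' eigenvalues, on the region $|s|\le A,\ t\ge A^3$ one has $|t-\Im\lambda_k|\ge A^3-A$, so I would integrate in $t$ first (using $\tfrac{\pi}{2}-\arctan(x)\le 1/x$ for $x>0$) and then in $s$, producing a contribution of order $A/(A^2-1)$ per eigenvalue and hence summing to the $\tfrac{8A}{A^2-1}$ leading term. For ``large'' eigenvalues I exploit that both the full $t$-Fourier integral and its restriction to $t<A^3$ are bounded by $\pi$ in modulus (the latter because the kernel $|a|/(a^2+u^2)$ has total mass $\pi$ on $\R$), so the tail $\int_{A^3}^\infty$ is bounded by $2\pi$; integrating in $s$ over $[-A,A]$ and combining with $\tfrac{2}{mn}$ and $m\ge 2$ gives the advertised $\tfrac{4\pi A}{n}\sum_k I(|\lambda_k|>A)$ term.

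The addendum replacing $g_n$ by $g$ follows from the same Fourier computations, with the simplification that the measure underlying $g$ is supported in the closed unit disk. Since $A>2$, both of the sets $\{|\lambda|\ge A/2\}$ and $\{|\lambda|>A\}$ have zero mass under the limiting circular distribution, so the ``bad eigenvalue'' terms vanish identically. The principal technical obstacle I anticipate is the small-eigenvalue accounting in the second inequality: one must carefully choose the order of integration and balance the factors $|s-\Re\lambda_k|\le 2A$, $|t-\Im\lambda_k|^{-2}$, and the length of the $s$-interval to land on the clean constant $\tfrac{8A}{A^2-1}$ rather than something comparable but worse.
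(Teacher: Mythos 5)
Your proposal is correct and is essentially the argument that the paper delegates to the cited source: the lemma is stated as Bai--Silverstein's with no proof given here, and the standard proof there proceeds exactly as you describe --- expand $g_n$ over the eigenvalues of $Y$, evaluate or bound the inner $t$-integral via the Fourier transform of the Poisson kernel $\int_{\R}\frac{a}{a^2+u^2}e^{ivu}\,\ud u=\pi\,\operatorname{sign}(a)e^{-|v||a|}$, and split the eigenvalues at $A/2$ (resp.\ $A$), with the $1/(mn)$ normalization and $m\ge 2$ accounting for the stated constants, and with the $g$-version following because the circular-law measure sits inside the unit disk, so only the ``small eigenvalue'' contributions occur. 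One cosmetic remark: in the second inequality the tail integral over $t\ge A^3$ is already bounded in modulus by $\pi$ (the total mass of the kernel $|a|/(a^2+u^2)$), so your detour through ``full minus restriction'' giving $2\pi$ is unnecessary, though it still lands within the stated bound.
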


Now we note that under the assumptions of Theorem \ref{thm:main} and by Lemma \ref{lemma:eigenvalue-singular-bound} and the law of large numbers we have 
\begin{equation*}
	\frac{1}{n} \sum_{k=1}^{mn} I(|\lambda_k| > A) \leq \frac{1}{nA^2} \tr(Y^\ast Y) \longrightarrow \frac{m}{A^2} \qquad \text{ a.s.}
\end{equation*}
Therefore, the right-hand sides of the inequalities in Lemma \ref{lemma:integral-bounds} can be made arbitrarily small by making $A$ large enough.  The same is true when $g_n(s,t)$ is replaced by $g(s,t)$.  Our task is then reduced to showing
\begin{equation*}
	\int_{|s| \leq A} \int_{|t| \leq A^3} [g_n(s,t) - g(s,t)] e^{ius+ivt} \ud s \ud t \longrightarrow 0.
\end{equation*}

We define the sets
\begin{equation*}
	T = \left\{ (s,t) : |s| \leq A, |t| \leq A^3 \text{ and } ||z|-1| \geq \epsilon \right\}
\end{equation*}
and
\begin{equation*}
	T_1 = \left\{(s,t) : ||z-1| < \epsilon \right\},
\end{equation*}
where $z=s+it$.  

\begin{lemma}[Bai-Silverstein \cite{bai-book}] \label{lemma:integral-esp-bound}
For all fixed $A$ and $0 < \epsilon < 1$,
\begin{equation} \label{integral-eps-bound}
	\int \int_{T_1} |g_n(s,t)| \ud s \ud t \leq 32 \sqrt{\epsilon}.
\end{equation}
Furthermore, if the function $g_n(s,t)$ is replaced by $g(s,t)$, the inequality above holds.  
\end{lemma}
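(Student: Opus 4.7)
The plan is to reduce $|g_n|$ to a sum of potential-type kernels evaluated against the eigenvalues of $Y^{(n)}$ and then control each potential integral over $T_1$ by the symmetric decreasing rearrangement inequality. Writing $R=Y^{(n)}-zI$ and using $\det(R^\ast R)=|\det R|^2=\prod_j|\lambda_j-z|^2$, where $\lambda_1,\dots,\lambda_{mn}$ are the eigenvalues of $Y^{(n)}$, one first identifies the log-potential
$$\int_0^\infty \ln x\,\nu_n(\ud x,z)=\frac{2}{mn}\sum_{j=1}^{mn}\ln|\lambda_j-z|,$$
so that differentiating in $s$ gives
$$g_n(s,t)=\frac{2}{mn}\sum_{j=1}^{mn}\frac{s-\Re\lambda_j}{|\lambda_j-z|^2},\qquad |g_n(s,t)|\leq \frac{2}{mn}\sum_{j=1}^{mn}\frac{1}{|\lambda_j-z|}.$$

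Reading $T_1$ as the annulus $\{z:1-\epsilon<|z|<1+\epsilon\}$ (consistent with the complementary definition of $T$; the statement as printed appears to have a typo), its area is $4\pi\epsilon$, and the translate $T_1-\lambda_j$ has the same area. Since $w\mapsto 1/|w|$ coincides with its own symmetric decreasing rearrangement on $\R^2$, the rearrangement inequality gives, for each $j$,
$$\int\int_{T_1}\frac{\ud s\,\ud t}{|\lambda_j-z|}=\int\int_{T_1-\lambda_j}\frac{\ud A(w)}{|w|}\leq \int_{|w|\leq 2\sqrt{\epsilon}}\frac{\ud A(w)}{|w|}=4\pi\sqrt{\epsilon}.$$
Summing the $mn$ copies and multiplying by the prefactor $2/(mn)$ produces
$$\int\int_{T_1}|g_n(s,t)|\,\ud s\,\ud t\leq \frac{2}{mn}\cdot mn\cdot 4\pi\sqrt{\epsilon}=8\pi\sqrt{\epsilon}<32\sqrt{\epsilon}.$$

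The deterministic counterpart is handled directly from the explicit piecewise formula: the bounds $|g(s,t)|\leq 2|s|/(s^2+t^2)\leq 2/|z|\leq 2$ on $\{|z|>1\}$ and $|g(s,t)|=2|s|\leq 2|z|\leq 2$ on $\{|z|\leq 1\}$ give $|g|\leq 2$ everywhere, so $\int\int_{T_1}|g|\leq 2\cdot 4\pi\epsilon=8\pi\epsilon\leq 8\pi\sqrt{\epsilon}<32\sqrt{\epsilon}$ since $\epsilon\leq 1$. Nothing in this plan is deep; the one step worth flagging as the crux is the rearrangement bound, since it is precisely what absorbs the local singularities of the Newton-type kernel $1/|\lambda_j-z|$ \emph{uniformly in $n$ and in the random configuration of the spectrum}, and it is what extracts the required $\sqrt{\epsilon}$ rate by matching the annulus $T_1$ against a disk of equal area — a weaker pointwise bound on $|g_n|$ would only yield an $\epsilon$ rate and be insufficient.
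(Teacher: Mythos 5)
Your argument is correct: the identity $\int_0^\infty \ln x\,\nu_n(\ud x,z)=\frac{2}{mn}\sum_j\ln|\lambda_j-z|$, the pointwise bound $|g_n(s,t)|\le\frac{2}{mn}\sum_j|\lambda_j-z|^{-1}$, and the equal-area rearrangement (bathtub) estimate over the annulus $\{\,||z|-1|<\epsilon\,\}$ (the paper's definition of $T_1$ indeed has a typo) give $8\pi\sqrt{\epsilon}\le 32\sqrt{\epsilon}$, uniformly in $n$ and in the spectrum, and the bound for $g$ is immediate from $|g|\le 2$ on $T_1$. The paper does not prove this lemma but quotes it from Bai--Silverstein, and your route is essentially the classical one there --- reduce to the potential $\sum_j|\lambda_j-z|^{-1}$ and integrate over the thin ring --- with the rearrangement step cleanly replacing the usual near/far splitting and yielding a slightly sharper constant.
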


Since the right-hand side of \eqref{integral-eps-bound} can be made arbitrarily small by choosing $\epsilon$ small, our task is reduced to showing 
\begin{equation} \label{task-reduction}
	\int \int_{T} [g_n(s,t) - g(s,t)] e^{ius+ivt} \ud s \ud t \longrightarrow 0 \qquad \text{a.s.}
\end{equation}

\subsection{Characterization of the Circular Law}

In this section, we study the convergence of the distributions $\nu_n(x,z)$ to a limiting distribution $\nu(x,z)$ as well as discuss properties of the limiting distribution $\nu(x,z)$.  We begin with a standard truncation argument which can be found, for example, in \cite{bai-book}.

\subsubsection{Truncation}

Let $\widehat{Y}$ and $\widetilde{Y}$ be the $(mn) \times (mn)$ matrices with entries
\begin{equation*}
	\entry[\widehat{Y}]{a}{b}{i}{j} = \entry[Y]{a}{b}{i}{j}I(\sqrt{n}|\entry[Y]{a}{b}{i}{j}| \leq n^\delta) - \E\entry[Y]{a}{b}{i}{j}I(\sqrt{n}|\entry[Y]{a}{b}{i}{j}| \leq n^\delta)
\end{equation*} 
and 
\begin{equation*}
	\entry[\widetilde{Y}]{a}{b}{i}{j} = \frac{\entry[\widehat{Y}]{a}{b}{i}{j}}{ \sqrt{ n \E \left|\entry[\widehat{Y}]{a}{b}{i}{j}\right|^2 }}
\end{equation*}
where $\delta>0$.  We denote the ESD of $\widehat{H}_n = (\widehat{Y}-zI)^\ast (\widehat{Y}-zI)$ by $\widehat{\nu}_n(\cdot, z)$ and the ESD of $\widetilde{H}_n = (\widetilde{Y}-zI)^\ast (\widetilde{Y}-zI)$ by $\widetilde{\nu}_n(\cdot, z)$.  

We will let $L(F_1,F_2)$ be the Levy distance between two distribution functions $F_1$ and $F_2$ defined by 
\begin{equation*}
	L(F_1,F_2) = \inf \{ \epsilon : F_1(x-\epsilon)-\epsilon \leq F_2(x) \leq F_1(x+\epsilon)+\epsilon \text{ for all } x \in \R \}.
\end{equation*}
We then have the following Lemma.

\begin{lemma}	\label{lemma:truncation}
We have that
\begin{equation*}
	L(\nu_n(\cdot, z), \widetilde{\nu}_n(\cdot, z)) = o(n^{-\eta \delta / 4}) \text{ a.s.}
\end{equation*}
where the bound is uniform for $|z| \leq M$.  
\end{lemma}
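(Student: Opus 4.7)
The plan is to insert the intermediate distribution $\widehat{\nu}_n$ and separately bound $L(\nu_n, \widehat{\nu}_n)$ and $L(\widehat{\nu}_n, \widetilde{\nu}_n)$, then combine via the triangle inequality for the Levy metric. Both bounds will come from the Bai--Silverstein Levy-distance inequality
$$L^4(F_{A^\ast A}, F_{B^\ast B}) \leq \frac{2}{N^2}\,\tr((A-B)(A-B)^\ast) \cdot \bigl(\tr(AA^\ast) + \tr(BB^\ast)\bigr),$$
with $N = mn$, applied first to $(A,B) = (Y-zI,\widehat{Y}-zI)$ and then to $(\widehat{Y}-zI,\widetilde{Y}-zI)$. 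The key structural observation is that both differences $Y - \widehat{Y}$ and $\widehat{Y} - \widetilde{Y}$ are independent of $z$, while the size factor $\tr(AA^\ast) = \|Y-zI\|_F^2 \leq 2\|Y\|_F^2 + 2mn|z|^2$ is $O(n)$ uniformly for $|z| \leq M$, thanks to the ordinary SLLN applied to the $mn^2$ i.i.d. non-zero entries of $Y$.

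For the truncation step, each non-zero entry of $Y - \widehat{Y}$ equals $\frac{\xi}{\sqrt n}I(|\xi|>n^\delta) - \frac{1}{\sqrt n}\E\xi I(|\xi|>n^\delta)$, so its squared modulus is at most $\frac{2}{n}|\xi|^2 I(|\xi|>n^\delta) + O(n^{-1-2\delta(1+\eta)})$. The pointwise bound $|\xi|^2 I(|\xi|>n^\delta) \leq n^{-\delta\eta}|\xi|^{2+\eta}$ reduces matters to showing that $Z_n := \sum_{k=1}^{mn^2}|\xi_k|^{2+\eta}I(|\xi_k|>n^\delta)$ satisfies $Z_n = o(n^2)$ almost surely. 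This follows from a fixed-cutoff comparison: for any $M > 0$ and $n$ large enough that $n^\delta > M$, $Z_n \leq \sum_{k=1}^{mn^2}|\xi_k|^{2+\eta}I(|\xi_k|>M)$, which by the usual SLLN equals $(1+o(1))\,mn^2\,\E|\xi|^{2+\eta}I(|\xi|>M)$; the limit then tends to $0$ as $M \to \infty$. Hence $\|Y-\widehat{Y}\|_F^2 = o(n^{1-\delta\eta})$ almost surely, and the Bai--Silverstein inequality gives $L^4(\nu_n, \widehat{\nu}_n) = o(n^{-\delta\eta})$ almost surely, uniformly for $|z| \leq M$.

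For the rescaling step, the scalar $c_n := (n\E|\widehat{Y}_{a,b;i,j}|^2)^{-1/2}$ satisfies $n\E|\widehat{Y}_{a,b;i,j}|^2 = \E|\xi|^2 I(|\xi|\leq n^\delta) - |\E\xi I(|\xi|\leq n^\delta)|^2 = 1 - o(n^{-\delta\eta})$, so $c_n - 1 = o(n^{-\delta\eta})$. Since $\widetilde{Y} - \widehat{Y} = (c_n - 1)\widehat{Y}$ on the non-zero block positions and $\|\widehat{Y}\|_F^2 = O(n)$ almost surely, we obtain $\|\widetilde{Y}-\widehat{Y}\|_F^2 = o(n^{1-2\delta\eta})$ a.s., and Bai--Silverstein yields $L^4(\widehat{\nu}_n, \widetilde{\nu}_n) = o(n^{-2\delta\eta})$. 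Combining both steps via the triangle inequality produces the desired bound $L(\nu_n, \widetilde{\nu}_n) = o(n^{-\eta\delta/4})$ a.s., uniformly for $|z| \leq M$. I expect the principal obstacle to be the almost-sure upgrade of the Frobenius-norm estimates, because the $(2+\eta)$-th moment hypothesis does not give a finite fourth moment for $\xi$; a direct Chebyshev-and-Borel--Cantelli argument on $\|Y-\widehat{Y}\|_F^2$ is therefore unavailable, forcing the fixed-cutoff SLLN comparison above.
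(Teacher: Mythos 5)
Your proposal is correct and follows essentially the same route as the paper: interpolate through $\widehat{\nu}_n$, apply the Bai--Silverstein Levy-distance inequality (Corollary A.42) to both pairs, handle the truncation term by the fixed-cutoff SLLN comparison $\sum_k |\xi_k|^{2+\eta}I(|\xi_k|>n^\delta)\leq \sum_k |\xi_k|^{2+\eta}I(|\xi_k|>L)$ with $L\to\infty$, and handle the rescaling term via the deterministic estimate $1-n\E|\widehat{Y}_{a,a+1;i,j}|^2=o(n^{-\delta\eta})$, with uniformity in $|z|\leq M$ coming from the $z$-independence of the difference matrices and the $O(n)$ trace bounds. The only differences are cosmetic (you quote A.42 in its symmetric standard form and write the rescaling as $\widetilde{Y}-\widehat{Y}=(c_n-1)\widehat{Y}$, where the paper carries the factor $1-(\E|\sqrt{n}\widehat{Y}_{1,2;1,1}|^2)^{-1/2}$ inside the inequality).
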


\begin{proof}
By \cite[Corollary A.42]{bai-book} we have that
\begin{equation*}
	L^4(\nu(\cdot, z), \widehat{\nu}_n(\cdot, z)) \leq \frac{2}{n^2}\tr(H_n - \widehat{H}_n) \tr[ (Y-\widehat{Y})^\ast (Y- \widehat{Y})].
\end{equation*}
By the law of large numbers it follows that, with probability $1$,
\begin{equation*}
	\frac{1}{n} \tr{H_n} = \frac{1}{n} \sum_{a=1}^m \sum_{1 \leq i, j \leq n} |\entry[Y]{a}{a+1}{i}{j}|^2 + m|z|^2 \longrightarrow m(1 + |z|^2).
\end{equation*}
Similarly, $\frac{1}{n} \tr(\widehat{H}_n) \rightarrow m(1+|z|^2)$ a.s.

For any $L>0$, we have  
\begin{align*}
	\frac{n^{\delta \eta}}{n}& \tr[(Y-\widehat{Y})^\ast (Y- \widehat{Y})] = \frac{n^{\delta \eta}}{n} \sum_{a=1}^m \sum_{1 \leq i,j \leq n} |\entry[(Y-\widehat{Y})]{a}{a+1}{i}{j}|^2 \\
		& \leq \frac{n^{\delta \eta}}{n^2} \sum_{a=1}^m \sum_{1 \leq i,j \leq n} \left| \sqrt{n}\entry[Y]{a}{a+1}{i}{j}I(\sqrt{n}|\entry[Y]{a}{a+1}{i}{j}| > n^\delta) - \E \sqrt{n}\entry[Y]{a}{a+1}{i}{j}I(\sqrt{n}|\entry[Y]{a}{a+1}{i}{j}|> n^\delta) \right|^2 \\
		& \leq 2n^{\delta \eta} \left( \frac{1}{n^2} \sum_{a=1}^m \sum_{1 \leq i,j \leq n} | \sqrt{n}\entry[Y]{a}{a+1}{i}{j}|^2 I(\sqrt{n}|\entry[Y]{a}{a+1}{i}{j}| > n^\delta) + \E |\xi|^2 I(|\xi|> n^\delta) \right) \\
		&\leq \frac{2}{n^2} \sum_{a=1}^m \sum_{1 \leq i,j \leq n} | \sqrt{n}\entry[Y]{a}{a+1}{i}{j}|^{2+\eta} I(\sqrt{n}|\entry[Y]{a}{a+1}{i}{j}| > L) + \E |\xi|^{2+\eta} I(|\xi|> L)
\end{align*}
and hence
\begin{equation*}
	\limsup_{n \rightarrow \infty} \frac{n^{\delta \eta}}{n} \tr[(Y-\widehat{Y})^\ast (Y- \widehat{Y})] \leq 4 m\E |\xi|^{2+\eta} I(|\xi|> L) \text{ a.s.} 
\end{equation*}
which can be made arbitrarily small by making $L$ large.  Thus we have that
\begin{equation*}
	L(\nu(\cdot, z), \widehat{\nu}_n(\cdot, z)) = o(n^{-\eta \delta / 4}) \text{ a.s.}
\end{equation*}
where the bound is uniform for $|z| \leq M$.  

By \cite[Corollary A.42]{bai-book} we also have that
\begin{equation*}
	L^4(\widehat{\nu}(\cdot, z), \widetilde{\nu}_n(\cdot, z)) \leq \frac{2}{n^2} \tr (\widehat{H}_n + \widetilde{H}_n) \tr(\widehat{Y}^\ast \widehat{Y}) \left( 1 - \frac{1}{\sqrt{ \E|\sqrt{n}\entry[\widehat{Y}]{1}{2}{1}{1}|^2 }} \right).
\end{equation*}
A similar argument shows that $1 - \sqrt{\E|\sqrt{n}\entry[\widehat{Y}]{1}{2}{1}{1}|^2} = o(n^{-\eta \delta})$ and the proof is complete.

\end{proof}

\begin{remark}
For the remainder of the subsection, we will assume the conditions of Theorem \ref{thm:main} hold.  Also, by Lemma \ref{lemma:truncation} we additionally assume that $| \entry[Y]{a}{a+1}{i}{j}| \leq n^\delta$.  
\end{remark}

\subsubsection{Useful tools and lemmas}

We begin by denoting the Stieltjes transform of ${\nu}_n(\cdot,z)$ by
\begin{equation*}
	\Delta_n(\alpha, z) = \int \frac{{\nu}_n(\ud x, z)}{x - \alpha},
\end{equation*}
where $\alpha= x + iy$ with $y>0$.  We also note that $\Delta_n(\alpha,z) = \frac{1}{mn} \tr({G})$ where ${G} = ({H}_n - \alpha I)^{-1}$ is the resolvent matrix.  For brevity, the variable $z$ will be suppressed when there is no confusion and we will simply write $\Delta_n(\alpha)$. 

We first present a number of lemmas that we will need to study $\Delta_n(\alpha)$.  We remind the reader that ${R} ={Y}-zI$ and $\alpha=x+iy$.  

\begin{lemma} \label{lemma:norm-bounds}
If $y>0$ and $x \in K$ for some compact set $K$, then we have the following bounds,
\begin{align}
	\|Y \|^2 &\leq \max_{1 \leq k \leq m} \|X_k \|^2 \leq \sum_{k=1}^m \|X_k \|^2, \label{Y-norm} \\ 
	\|G \|  &\leq \frac{1}{y}, \label{G-norm}\\
	\| RG \| & \leq C\sqrt{ \frac{1}{y^2} + \frac{1}{y}}, \label{RG-norm} \\
	\|G R^\ast \| &\leq C\sqrt{ \frac{1}{y^2} + \frac{1}{y}}, \label{GR-norm} 
\end{align}
for some constant $C>0$ which depends on $K$.  Moreover, there exists a constant $C$ which depends only on $K$ such that
\begin{align}
	\sup \left \{ \| RG \| : x \in K, y \geq y_n, z \in \C \right \} &\leq C\sqrt{ \frac{1}{y_n^2} + \frac{1}{y_n}}, \label{RG-unif-bound} \\
	\sup \left \{ \| G R^\ast \| : x \in K, y \geq y_n, z \in \C \right \} &\leq C\sqrt{ \frac{1}{y_n^2} + \frac{1}{y_n}}, \label{GR-unif-bound}
\end{align}
for any sequence $y_n > 0$.  
\end{lemma}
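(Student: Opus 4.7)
My plan is to handle each pointwise bound in turn, relying on the block structure of $Y$ for \eqref{Y-norm}, the spectral theorem for \eqref{G-norm}, and the resolvent identity $H_nG = I + \alpha G$ for \eqref{RG-norm}--\eqref{GR-norm}. The uniform versions will then follow immediately because the bounds obtained do not depend on $z$.

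For \eqref{Y-norm}, I would compute $Y^*Y$ directly. Each block-column of $Y$ contains exactly one nonzero block (one of the $X_k$), so all cross terms in $Y^*Y$ vanish and one gets the block-diagonal matrix
\begin{equation*}
Y^*Y = \mathrm{diag}(X_m^*X_m,\, X_1^*X_1,\, \ldots,\, X_{m-1}^*X_{m-1}).
\end{equation*}
Thus $\|Y\|^2 = \|Y^*Y\| = \max_k \|X_k\|^2$, and the second inequality in \eqref{Y-norm} is trivial. For \eqref{G-norm}, since $H_n$ is Hermitian positive semi-definite, the spectral theorem gives $\|G\| = \sup_{\lambda \geq 0}|\lambda - \alpha|^{-1} \leq 1/y$.

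The central step is \eqref{RG-norm}. From $G^{-1} = H_n - \alpha I$ one has the identity $H_nG = I + \alpha G$. Since $\langle v, H_n v\rangle \geq 0$ for all $v$, Cauchy--Schwarz yields, for any unit vector $u$,
\begin{equation*}
\|RGu\|^2 = \langle Gu, H_n Gu\rangle = \langle Gu, (I + \alpha G)u\rangle \leq \|Gu\|\,\|(I+\alpha G)u\| \leq \|G\|\bigl(1 + |\alpha|\|G\|\bigr).
\end{equation*}
With $\|G\| \leq 1/y$ and, for $x \in K$ with $|x| \leq M$, the estimate $|\alpha| \leq M + y$, this gives $\|RG\|^2 \leq \frac{2}{y} + \frac{M}{y^2} \leq C\left(\frac{1}{y} + \frac{1}{y^2}\right)$ with $C$ depending only on $K$. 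The adjoint bound \eqref{GR-norm} then follows from $\|GR^*\| = \|RG^*\|$ by repeating the argument with $G^*$ (the resolvent at $\bar\alpha$) in place of $G$ and $\bar\alpha$ in place of $\alpha$.

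Finally, the right-hand side $\frac{2}{y} + \frac{M}{y^2}$ is manifestly independent of $z$---the $z$-dependence entered only through $R = Y - zI$, but was absorbed against $G$ via the resolvent identity---and is monotone decreasing in $y$. Therefore the supremum over $z \in \C$ and $y \geq y_n$ is attained at the endpoint $y = y_n$, which yields \eqref{RG-unif-bound} and \eqref{GR-unif-bound} with no additional work. The only real subtlety, rather than an obstacle, is recognizing that the naive estimate $\|RG\| \leq \|R\|\|G\|$ is too crude (since $\|R\|$ grows with $|z|$ and would destroy the claimed uniformity), so one must instead cancel the growth of $R$ against the decay of $G$ using the resolvent identity.
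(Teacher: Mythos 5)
Your argument is correct, and it reaches the key estimates \eqref{RG-norm}--\eqref{GR-norm} by a slightly different mechanism than the paper. The paper writes $R=U|R|$ by polar decomposition and then bounds $\|RG\|\leq \| |R|(R^\ast R-\alpha)^{-1}\|\leq \sup_{t\geq 0}|\sqrt{t}(t-\alpha)^{-1}|$ by the spectral theorem for the Hermitian matrix $R^\ast R$, whereas you avoid functional calculus altogether and instead use the algebraic identity $H_nG=I+\alpha G$ together with $\|RGu\|^2=\langle Gu,H_nGu\rangle$ and Cauchy--Schwarz, getting $\|RG\|^2\leq\|G\|(1+|\alpha|\,\|G\|)\leq \frac{2}{y}+\frac{M}{y^2}$ with $M=\sup_{x\in K}|x|$; both routes implement the same essential idea (cancel the growth of $R$ against the decay of $G$ rather than estimating $\|R\|\,\|G\|$) and give the same bound $C\sqrt{y^{-2}+y^{-1}}$ with $C$ depending only on $K$, uniformly in $z$. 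Your treatment of \eqref{GR-norm} via $\|GR^\ast\|=\|RG^\ast\|$ and the resolvent at $\bar\alpha$ matches the paper's ``similar argument,'' and your passage to \eqref{RG-unif-bound}--\eqref{GR-unif-bound} by monotonicity in $y$ is exactly how the paper concludes. For \eqref{Y-norm} you actually prove the sharper equality $\|Y\|^2=\max_k\|X_k\|^2$ from the block-diagonal form of $Y^\ast Y$ (a fact the paper records separately in the proof of Lemma \ref{lemma:n-norm-bound}), which is a harmless strengthening; \eqref{G-norm} is handled identically in both. In short: correct, with a marginally more elementary proof of the central inequality.
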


\begin{proof}
The first inequality in \eqref{Y-norm} follows from the definition of the norm and the second inequality is trivial.  The resolvent bound in \eqref{G-norm} follows immediately because $H_n$ is a Hermitian matrix.

To prove \eqref{RG-norm}, we use polar decomposition to write $R=U|R|$ where $U$ is a partial isometry and $|R| = \sqrt{R^\ast R}$.  Then
\begin{align*}
	\|RG \| &= \| U|R| (R^\ast R-\alpha)^{-1} \| \leq \| |R| (R^\ast R - \alpha)^{-1} \| \\
	&\leq \sup_{t \in \text{Sp}(R^\ast R)} | \sqrt{t}(t-\alpha)^{-1} | \leq \sup_{t \geq 0} | \sqrt{t}(t-\alpha)^{-1} | \leq C\sqrt{ \frac{1}{y^2} + \frac{1}{y}}.
\end{align*}
A similar argument verifies \eqref{GR-norm}.  \eqref{RG-unif-bound} and \eqref{GR-unif-bound} follow from \eqref{RG-norm} and \eqref{GR-norm} by using that $y \geq y_n$.  
\end{proof}

\begin{lemma} \label{lemma:G-distribution}
We have that
\begin{equation*}
	\E \left[ \frac{1}{n} \tr G_{a,a} \right] = \E \left[ \frac{1}{mn} \tr G \right]
\end{equation*}
for any $1 \leq a \leq m$.  
\end{lemma}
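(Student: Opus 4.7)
The plan is to exploit a cyclic block symmetry of the problem. Since by the reduction in Section 2 we have $\sigma_1 = \cdots = \sigma_m = 1$, the blocks $X_1, \ldots, X_m$ are i.i.d.\ (each is an $n \times n$ matrix with i.i.d.\ entries drawn from the same distribution), so cyclically relabeling them does not change the joint law of $Y$.

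Concretely, let $\pi$ denote the cyclic permutation $a \mapsto a+1 \pmod{m}$ on $\{1, \ldots, m\}$, and let $P$ be the $(mn) \times (mn)$ block-permutation matrix whose $(a,b)$ block equals $I_n$ when $a = \pi(b)$ and $0$ otherwise. Then $P$ is unitary with $P^{-1} = P^{\ast}$, and a direct block computation shows
\begin{equation*}
	(P Y P^{\ast})_{a,b} = Y_{\pi^{-1}(a), \pi^{-1}(b)},
\end{equation*}
so that the only nonzero blocks of $P Y P^{\ast}$ are $(PYP^\ast)_{a,a+1} = X_{\pi^{-1}(a)}$ (indices mod $m$). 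That is, $P Y P^\ast$ has the same block structure as $Y$ but with the $X_k$'s cyclically shifted. Since the $X_k$ are i.i.d., $P Y P^{\ast}$ has the same joint distribution as $Y$.

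Next I would transfer this symmetry to the resolvent. Because $P I P^{\ast} = I$, we have
\begin{equation*}
	P H_n P^{\ast} = (PYP^\ast - zI)^{\ast}(PYP^\ast - zI),
\end{equation*}
which therefore has the same distribution as $H_n$. Consequently $P G P^{\ast} = (PH_nP^\ast - \alpha I)^{-1}$ has the same distribution as $G$. A block computation identical to the one above gives $(P G P^{\ast})_{a,a} = G_{\pi^{-1}(a), \pi^{-1}(a)}$, and taking traces and expectations yields
\begin{equation*}
	\E\left[ \tr G_{a,a} \right] = \E\left[ \tr G_{\pi^{-1}(a), \pi^{-1}(a)} \right].
\end{equation*}
Iterating over $\pi$ shows that $\E[\tr G_{a,a}]$ is independent of $a$, and averaging over $a = 1, \ldots, m$ gives $\E[\tr G_{a,a}] = \frac{1}{m} \E[\tr G]$, which is exactly the claim. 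There is no real obstacle here beyond carefully bookkeeping the block indices; the content of the lemma is entirely the cyclic symmetry of $Y$.
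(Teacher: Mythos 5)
Your argument is correct, and it rests on exactly the symmetry the paper exploits: the joint law of $(X_1,\ldots,X_m)$ is invariant under a cyclic shift (valid once one normalizes $\sigma_1=\cdots=\sigma_m$, as in Section 2), so the block-diagonal traces of the resolvent all have the same expectation. The difference is in the implementation. You conjugate by the block cyclic-shift unitary $P$, note $PYP^\ast \stackrel{d}{=} Y$, hence $PGP^\ast=(PH_nP^\ast-\alpha I)^{-1}\stackrel{d}{=}G$, and read off $(PGP^\ast)_{a,a}=G_{\pi^{-1}(a),\pi^{-1}(a)}$; this gives equality in distribution of $\tr G_{a,a}$ over $a$, in particular equality of expectations, in a few lines. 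The paper instead writes each diagonal entry $G_{a,a;i,i}$ as a ratio of determinants via the cofactor formula, introduces a relabeling map $T_\sigma$ on matrices of the block form together with an explicit measure-preserving map $T'_\sigma(X_1,\ldots,X_m)=(X_2,\ldots,X_m,X_1)$ on the product probability space, and checks through an eigenvector correspondence that the relevant determinants are invariant, obtaining the pointwise identity $G_{a,a;i,i}(T'_\sigma\omega)=G_{a+1,a+1;i,i}(\omega)$. Your route is cleaner and avoids the entrywise determinant bookkeeping, while the paper's map-on-$\Omega$ formulation records the same invariance at the level of individual realizations; both hinge on the same i.i.d.\ hypothesis and yield the lemma.
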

\begin{proof}
Fix $1 \leq a \leq m$ and $1 \leq i \leq n$.  We will show that 
\begin{equation*}
	\E\entry{a}{a}{i}{i} = \E\entry{a+1}{a+1}{i}{i}.
\end{equation*}
Using the adjoint formula for the inverse of a matrix, we can write that for any $1 \leq b \leq m$
\begin{equation*}
	\entry{b}{b}{i}{i} = \frac{ \det\left(R^\ast R - \alpha I\right)^{(b,i)}}{\det\left(R^\ast R - \alpha I \right)}
\end{equation*}
where $\left(R^\ast R - \alpha I\right)^{(b,i)}$ is the matrix $R^\ast R - \alpha I$ with the entries in the row and column that contain the element \entry[(R^\ast R - \alpha I)]{b}{b}{i}{i} replaced by zeroes except for the diagonal element which is replaced by a $1$.  

We will write $Q_b = X_b^\ast X_b + |z|^2 I - \alpha I$ and then note that $R^\ast R - \alpha I$ has the form
\begin{equation} \label{block-form}
	 \left( \begin{array}{cccccc}
				Q_m & - \bar{z}X_1 & 0 & \cdots & 0 & -z X_m^\ast \\
				-z X_1^\ast  & Q_1 & -\bar{z}X_2 & 0 & \cdots & 0 \\
				0 & -zX_2^\ast & Q_2 & \ddots & 0 & \vdots \\
				\vdots & 0 & \ddots & \ddots & -\bar{z}X_{m-2} & 0 \\
				0 & \cdots & 0 & -zX_{m-2}^\ast & Q_{m-2} & -\bar{z}X_{m-1} \\
				-\bar{z}X_m & 0 & \cdots & 0 & -z X_{m-1}^\ast & Q_{m-1}
	\end{array}\right),
\end{equation}
where $Q_m, Q_1, \ldots, Q_{m-1}$ appear along the diagonal.  

Let $\sigma = (1 \  2 \  3 \ldots m) \in S_m$.  We now construct two bijective maps.  Let $T_\sigma$ be the map that takes matrices of the form \eqref{block-form} into the matrix where each occurrence of $X_b$ is replaced by $X_{\sigma(b)}$ and each occurrence of $Q_b$ is replaced by $Q_{\sigma(b)}$.  Also, let 

\begin{equation*}
	\Omega = \underbrace{\C^{n^2} \times \C^{n^2} \times \cdots \times \C^{n^2}}_{m \text{ times}}
\end{equation*}
denote the probability space.  Then we write $\omega \in \Omega$ as $\omega = (X_1, X_2, \ldots, X_m)$.  We now define $T'_{\sigma}: \Omega \rightarrow \Omega$ by $T'_\sigma (X_1, \ldots, X_m) = (X_2, X_3, \ldots, X_m, X_1)$.  Since each $X_1, \ldots, X_m$ is an independent and identically distributed random matrix, $T'_\sigma$ is a measure preserving map.  

We claim that $\det(R^\ast R - \alpha I) = \det(T_\sigma(R^\ast R - \alpha I))$.  Indeed, if $\lambda$ is an eigenvalue of $(R^\ast R - \alpha I)$ with eigenvector $v = (v_m, v_1, \ldots, v_{m-1})\T$ where $v_b$ is an $n$-vector, then a simple computation reveals that $w=(v_{\sigma(m)}, v_{\sigma(1)}, \ldots, v_{\sigma(m-1)})\T = (v_1, \ldots, v_m)\T$ is an eigenvector of $T_\sigma(R^\ast R - \alpha I)$ with eigenvalue $\lambda$.  

Similarly, $\det\left(R^\ast R - \alpha I\right)^{(b,i)} = \det\left(T_\sigma \left(\left(R^\ast R - \alpha I\right)^{(b,i)}\right) \right)$.  Define $f_{a,i}(\omega)$ to be $\det\left(R^\ast R - \alpha I\right)^{(b,i)}(\omega)$ for each realization $\omega \in \Omega$.  Then we have that
\begin{align*}
	f_{a+1, i}(\omega) &= \det \left(R^\ast R- \alpha I \right)^{(a+1,i)}(\omega) \\
	&= \det \left(T_\sigma \left( \left(R^\ast R- \alpha I \right)^{(a+1,i)} \right) \right)(\omega) \\
	&= \det \left(T_\sigma \left(R^\ast R - \alpha I \right) \right)^{(a,i)} (\omega) \\
	&= \det \left(R^\ast R - \alpha I \right)^{(a,i)}(T'_\sigma (\omega)) = f_{a,i} (T'_\sigma(\omega))
\end{align*}
and
\begin{equation*}
	\det(R^\ast R- \alpha I)(\omega) = \det(R^\ast R- \alpha I)(T'_\sigma (\omega)).
\end{equation*}
Thus $\entry{a}{a}{i}{i}(T'_\sigma (\omega)) = \entry{a+1}{a+1}{i}{i}(\omega)$ for each $\omega \in \Omega$.  Since $T'_\sigma$ is measure preserving, the proof is complete.  
\end{proof}

Next, we present the decoupling formula, which can be found, for example, in \cite{kkp}.  If $\xi$ is a real-valued random variable such that $\E|\xi|^{p+2} < \infty$ and if $f(t)$ is a complex-valued function of a real variable such that its first $p+1$ derivatives are continuous and bounded, then
\begin{equation}
	\E[\xi f(\xi)] = \sum_{a=0}^p \frac{\kappa_{a+1}}{a!} \E[f^{(a)}(\xi)] + \epsilon, \label{decoupling}
\end{equation}
where $\kappa_a$ are the cumulants of $\xi$ and $|\epsilon| \leq C \sup_t |f^{(p+1)}(t)| \E|\xi|^{p+2}$ where $C$ depends only on $p$.  

If $\xi$ is a Gaussian random variable with mean zero, then all the cumulants vanish except for $\kappa_2$ and the decoupling formula reduces to the exact equation
\begin{equation*}
	\E[\xi f(\xi)] = \E[\xi^2] \E[f'(\xi)].
\end{equation*}

Finally, to use \eqref{decoupling}, we need to compute the derivatives of the resolvent matrix $G$ with respect to the various entries of $Y$.  This can be done by utilizing the resolvent identity and we find
\begin{align*}
	\frac{ \partial \entry{a}{b}{k}{l}}{\partial \Re( \entry[Y]{c}{c+1}{q}{p} )} &= - \entry[(GR^\ast)]{a}{c}{k}{q} \entry{c+1}{b}{p}{l} - \entry{a}{c+1}{k}{p} \entry[(RG)]{c}{b}{q}{l}, \\
	\frac{ \partial \entry{a}{b}{k}{l}}{\partial \Im( \entry[Y]{c}{c+1}{q}{p})} &= -i \entry[(GR^\ast)]{a}{c}{k}{q} \entry{c+1}{b}{p}{l} + i \entry{a}{c+1}{k}{p} \entry[(RG)]{c}{b}{q}{l}.
\end{align*}

\subsubsection{Main Theorem}

For the results below, we will consider $\alpha=x+iy$ where $y \geq y_n$ with $y_n = n^{-\eta \delta}$.  Our goal is to establish the following result.

\begin{theorem} \label{thm:cubic-eq}
Under the conditions of Theorem \ref{thm:main} and the additional assumption that $| \entry[Y]{a}{a+1}{i}{j}| \leq n^\delta$, we have
\begin{equation*}
	\Delta_n^3(\alpha,z) + 2 \Delta_n^2(\alpha,z) + \frac{ \alpha + 1 - |z|^2}{\alpha} \Delta_n(\alpha,z) + \frac{1}{\alpha} = r_n(\alpha, z),
\end{equation*}
where if $\delta$ is chosen such that $\delta \eta \leq 1/32$ and $\delta \leq 1/32$, then the remainder term $r_n$ satisfies
\begin{equation*}
	\sup \left \{ |r_n(\alpha, z)| : |z| \leq M, |x| \leq N, y \geq y_n \right\} = O\left( \delta_n \right) \qquad \text{a.s.}
\end{equation*}
with $\delta_n = n^{-1/4} y_n^{-5} n^\delta$.  
\end{theorem}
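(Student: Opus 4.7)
My plan is to derive the cubic by applying the decoupling formula \eqref{decoupling} (Stein-type integration by parts) to resolvent entries, following the strategy of Khorunzhy--Khoruzhenko--Pastur \cite{kkp} and Bai--Silverstein \cite{bai-book}. Throughout write $G=(H_n-\alpha I)^{-1}$ and $\widetilde G=(RR^*-\alpha I)^{-1}$; note that $RGR^*=I+\alpha\widetilde G$ exactly (from $R^*RG=I+\alpha G$) and that $R^*R,RR^*$ share their entire spectrum, so $\frac{1}{mn}\tr\widetilde G=\Delta_n$ exactly.

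Starting from the identity $\alpha G_{a,a;i,i}=(R^*RG)_{a,a;i,i}-1$, expanding $R^*=Y^*-\bar zI$ and using the block-cyclic structure of $Y$ (only blocks $(l,l+1)$ are populated, by $X_l$) gives
\[
\alpha G_{a,a;i,i}=-1-\bar z\,(RG)_{a,a;i,i}+\sum_k\overline{Y_{a-1,a;k,i}}\,(RG)_{a-1,a;k,i}.
\]
I average over $a,i$, take expectation, and apply \eqref{decoupling} with $p=1$ to each bilinear term, splitting $Y=\Re Y+i\,\Im Y$ into independent parts of common variance $\frac{1}{2n}$. The required derivative, obtained from the provided formulas together with $\partial R/\partial Y_{c,d;p,q}$ being the elementary matrix, collapses via $RGR^*=I+\alpha\widetilde G$ to
\[
\frac{\partial(RG)_{a-1,a;k,i}}{\partial\Re Y_{a-1,a;k,i}}=-(RG)^{2}_{a-1,a;k,i}-\alpha\,\widetilde G_{a-1,a-1;k,k}\,G_{a,a;i,i},
\]
with a conjugate formula for the $\Im$-part differing by factors of $i$. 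When combined in $\E[\overline Y\cdot(RG)]$, the two $(RG)^{2}$ contributions cancel and the two $\alpha\widetilde G\,G$ contributions reinforce, giving $\tfrac{1}{mn}\sum_{a,i,k}\E[\overline{Y_{a-1,a;k,i}}(RG)_{a-1,a;k,i}]\approx-\tfrac{\alpha}{mn^{2}}\sum_{a,i,k}\E[\widetilde G_{a-1,a-1;k,k}G_{a,a;i,i}]$. A parallel integration by parts applied to $\tfrac{1}{mn}\E\tr(YG)$, together with $\tr(RG)=\tr(YG)-z\tr(G)$, produces a second identity for $\tau_n:=\tfrac{1}{mn}\E\tr(RG)$.

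To close the system, Lemma \ref{lemma:G-distribution} gives $\E\frac{1}{n}\tr G_{a,a}=\E\Delta_n$ for every $a$; the same cyclic-permutation argument applied to $RR^*$ in place of $R^*R$ yields $\E\frac{1}{n}\tr\widetilde G_{a,a}=\E\Delta_n$. Combined with concentration of these partial traces, both bilinear expectations factor as products of $\E\Delta_n$'s. The first equation then becomes $\alpha\E\Delta_n+1+\bar z\tau_n+\alpha(\E\Delta_n)^{2}\approx 0$, and the second reduces (after the analogous factorization and a little algebra) to $\tau_n\approx -z\,\E\Delta_n/(1+\E\Delta_n)$. Eliminating $\tau_n$ and multiplying by $1+\E\Delta_n$ produces $\alpha\Delta^{3}+2\alpha\Delta^{2}+(\alpha+1-|z|^{2})\Delta+1\approx 0$, i.e.\ the stated cubic after dividing by $\alpha$. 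A martingale-difference concentration bound with $\var(\Delta_n)=O(1/(n^{2}y^{2}))$, obtained from the entrywise derivative formulas and Lemma \ref{lemma:norm-bounds}, upgrades the identity from $\E\Delta_n$ to $\Delta_n$ almost surely.

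The remainder $r_n$ accumulates three sources of error: (i) the Taylor truncation in \eqref{decoupling}, bounded by $C\sup|f''|\cdot\E|\sqrt n\,Y|^{3}\leq Cn^{\delta}\sup|f''|$ under the truncation $|\sqrt n\,Y|\leq n^{\delta}$; (ii) the product-splitting error for $\E[\widetilde G_{a-1,a-1;k,k}G_{a,a;i,i}]$; (iii) concentration errors for $\Delta_n$ and for partial traces $\frac{1}{n}\tr G_{a,a}$. Each pulls negative powers of $y$ through $\|G\|\leq 1/y$ and $\|RG\|,\|GR^*\|\leq C\sqrt{1/y^{2}+1/y}$ from Lemma \ref{lemma:norm-bounds}. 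With $y\geq y_n=n^{-\eta\delta}$ and $\delta,\delta\eta\leq 1/32$, careful accounting yields $r_n=O(n^{-1/4}y_n^{-5}n^{\delta})$ uniformly in the stated $(\alpha,z)$ region, upgraded to almost-sure by Borel--Cantelli on a polynomial net. The main obstacle is precisely this bookkeeping: second-derivative terms in the decoupling expansion produce many products of $G$, $RG$, $GR^*$ and $\widetilde G$ at various index combinations that must either recombine into the cubic or be estimated uniformly in $(\alpha,z)$, and the concentration of \emph{partial block traces} $\frac{1}{n}\tr G_{a,a}$ around $\Delta_n$ (not just the global trace) is the most delicate ingredient and is what forces the $y_n^{-5}$ loss in $\delta_n$.
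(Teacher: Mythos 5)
Your proposal follows essentially the same route as the paper: the resolvent identity for $\tr G_{a,a}$, the decoupling formula \eqref{decoupling} applied entrywise to the independent real and imaginary parts of $Y$, the block-trace symmetry of Lemma \ref{lemma:G-distribution}, factorization of products of block traces via the McDiarmid/martingale concentration bounds of Lemmas \ref{lemma:mcdiarmid-bound} and \ref{lemma:variance-bound}, and a net plus Borel--Cantelli argument for uniformity in $(\alpha,z)$, with the same error scale $\delta_n$; your bookkeeping via $\tau_n=\frac{1}{mn}\E\tr(RG)$ and elimination is only a cosmetic variant of the paper's direct expansion, which generates the cubic term by a second application of the decoupling formula to $\tr(GR^\ast Y)_{a,a}$. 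The one caveat is your assumption that $\Re\xi$ and $\Im\xi$ have common variance $\tfrac{1}{2}$ (the hypotheses give only independence and total variance $1$), on which your exact cancellation of the $(RG)^2$ terms relies; the leftover contribution is proportional to $1-2\var(\Re\xi)$ and is an entrywise square sum $\sum_{k,i}(RG)_{a-1,a;k,i}^2$ bounded via Hilbert--Schmidt norms by $O(n^{-1}y_n^{-2})$, hence negligible against $\delta_n$, whereas the paper avoids the issue by carrying the variances $\delta$ and $1-\delta$ through the expansion and letting them recombine.
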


\begin{remark}
We note that the bounds presented here and in the rest of this section are not optimal and can be improved.  The bounds given, however, are sufficient for our purposes.  
\end{remark}

In order to prove Theorem \ref{thm:cubic-eq}, we will need the following lemmas.  The first lemma is McDiarmid's Concentration Inequality \cite{mcdiarmid}.

\begin{lemma}[McDiarmid's Concentration Inequality] \label{lemma:mcdiarmid}
Let $X = (X_1, X_2, \ldots, X_n)$ be a family of independent random variables with $X_k$ taking values in a set $A_k$ for each $k$.  Suppose that the real-valued $f$ defined on $\prod A_k$ satisfies
\begin{equation*}
	|f(x) - f(x')| \leq c_k
\end{equation*}
whenever the vectors $x$ and $x'$ differ only in the $k$th coordinate.  Let $\mu$ be the expected value of the random variable $f(X)$.  Then for any $t \geq 0$,
\begin{equation*}
	\Prob \left( |f(X) - \mu | \geq t \right) \leq 2 e^{-2t^2 / \sum c_k^2}.
\end{equation*}
\end{lemma}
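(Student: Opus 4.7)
The plan is to prove McDiarmid's inequality via the classical Azuma--Hoeffding martingale approach, applied to the Doob martingale associated with $f(X)$. Define for $k = 0, 1, \ldots, n$ the conditional expectations $Y_k = \E[f(X) \mid X_1, \ldots, X_k]$, so that $Y_0 = \mu$ and $Y_n = f(X)$, and set $D_k = Y_k - Y_{k-1}$. Then $f(X) - \mu = \sum_{k=1}^n D_k$ is a sum of martingale differences with respect to the filtration $\mathcal{F}_k = \sigma(X_1, \ldots, X_k)$, and the inequality will follow from a conditional MGF bound on each $D_k$ combined with the Chernoff trick.

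The first key step is to translate the bounded differences hypothesis into a bounded-range statement for $D_k$ conditional on $\mathcal{F}_{k-1}$. Writing $Z_k(x_1, \ldots, x_k) = \E[f(X) \mid X_1 = x_1, \ldots, X_k = x_k]$ and using that $X_{k+1}, \ldots, X_n$ are independent of $\mathcal{F}_k$, the hypothesis $|f(x) - f(x')| \leq c_k$ for $x, x'$ differing only in coordinate $k$ yields
\begin{equation*}
\left| Z_k(X_1, \ldots, X_{k-1}, x_k) - Z_k(X_1, \ldots, X_{k-1}, x_k') \right| \leq c_k
\end{equation*}
by integrating over $X_{k+1}, \ldots, X_n$. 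Consequently, conditional on $\mathcal{F}_{k-1}$, $D_k$ takes values in an interval whose length is at most $c_k$, and by the martingale property $\E[D_k \mid \mathcal{F}_{k-1}] = 0$.

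Next I invoke Hoeffding's lemma: if $W$ is a random variable with $\E W = 0$ taking values in an interval of length $L$, then $\E[e^{\lambda W}] \leq e^{\lambda^2 L^2 / 8}$ for every $\lambda \in \R$. Applied conditionally this gives $\E[e^{\lambda D_k} \mid \mathcal{F}_{k-1}] \leq e^{\lambda^2 c_k^2 / 8}$. Iterating with the tower property, starting from the innermost conditional expectation, I obtain
\begin{equation*}
\E\!\left[ e^{\lambda (f(X) - \mu)} \right] = \E\!\left[ \prod_{k=1}^{n} e^{\lambda D_k} \right] \leq \exp\!\left( \frac{\lambda^2}{8} \sum_{k=1}^n c_k^2 \right).
\end{equation*}
Applying Markov's inequality to $e^{\lambda(f(X) - \mu)}$ and optimizing the resulting bound $\exp(-\lambda t + \lambda^2 \sum c_k^2 / 8)$ in $\lambda > 0$ at the minimizer $\lambda = 4t/\sum c_k^2$ yields $\Prob(f(X) - \mu \geq t) \leq e^{-2t^2/\sum c_k^2}$. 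The same argument applied to $-f$ (which satisfies the same bounded-differences condition with the same constants $c_k$) gives the matching lower tail, and a union bound produces the factor of $2$ in the stated two-sided inequality.

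The main obstacle is the conditional bounded-range step: one must carefully invoke independence of $X_{k+1}, \ldots, X_n$ from $\mathcal{F}_k$ to reduce the bounded differences hypothesis on $f$ to the corresponding bound on the conditional expectation $Z_k$, and then recognize that conditional on $\mathcal{F}_{k-1}$ the increment $D_k$ is a centered random variable whose essential range has length at most $c_k$. Once this is in place, Hoeffding's lemma and the standard Chernoff optimization are routine.
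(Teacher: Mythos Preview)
Your proof is correct and is in fact the standard Azuma--Hoeffding/Doob martingale derivation of McDiarmid's inequality. Note, however, that the paper does not give its own proof of this lemma: it is simply quoted from McDiarmid's original paper \cite{mcdiarmid} as a known tool, so there is nothing in the paper to compare your argument against beyond the citation itself.
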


\begin{remark}
McDiarmid's Concentration Inequality also applies to complex-valued functions by applying Lemma \ref{lemma:mcdiarmid} to the real part and imaginary part separately.  
\end{remark}

\begin{lemma}  \label{lemma:mcdiarmid-bound}
For $y \geq y_n$ and $|x| \leq N$ (where $\alpha=x+iy$), 
\begin{equation}
\label{lyon11}
	\Prob \left( | \Delta_n(\alpha,z) - \E \Delta_n(\alpha,z) | > t \right) \leq  4 e^{-ct^2 ny_n^4}
\end{equation}
for some absolute constant $c > 0$.   Moreover,  
\begin{equation*}
	\sup \left \{ |\Delta_n(\alpha,z) - \E\Delta_n(\alpha,z)| : |z| \leq M, |x| \leq N, y \geq y_n \right\} 
= O \left( n^{-1/4}y_n^{-2} \right) \qquad \text{a.s.}
\end{equation*}
\end{lemma}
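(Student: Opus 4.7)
The plan is to apply McDiarmid's inequality (Lemma \ref{lemma:mcdiarmid}) to $\Delta_n(\alpha,z) = \frac{1}{mn}\tr G$ viewed as a function of the independent real and imaginary parts of the (truncated) entries $Y_{a,a+1;i,j}$, and then bootstrap the pointwise concentration bound to the uniform a.s.\ bound via Borel--Cantelli together with a net argument.

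The first step is the bounded-difference estimate. If one entry $Y_{a,a+1;i,j}$ is changed, the resulting $Y'$ satisfies $\mathrm{rank}(Y - Y') \leq 1$, and hence $H_n - H_n' = R^\ast R - R'^{\ast}R'$ has rank at most $2$. I can now estimate the change in $\Delta_n$ in one of two equivalent ways: either via the rank inequality for Stieltjes transforms combined with the entry size bound $|Y_{a,a+1;i,j}| \leq n^\delta/\sqrt{n}$, or more directly by writing
$$\frac{\partial \tr G}{\partial \Re Y_{a,a+1;i,j}} = -\tr\bigl(G(E^\ast R + R^\ast E)G\bigr),$$
and bounding this trace using $|\tr(AB)| \leq \|A\|\|B\|_{S^1}$ together with the norm estimates \eqref{G-norm} and \eqref{RG-norm} from Lemma \ref{lemma:norm-bounds}. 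Either route yields bounded-difference constants $c_k = O(n^{\delta - 3/2} y_n^{-2})$ across the $O(n^2)$ real variables, so that $\sum c_k^2 = O(n^{2\delta-1} y_n^{-4})$ and McDiarmid gives the desired tail bound \eqref{lyon11} (absorbing the $n^{-2\delta}$ factor into the constant $c$, which is legitimate since $\delta$ will ultimately be chosen small).

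For the almost sure uniform sup bound, I take $t = t_n := C n^{-1/4} y_n^{-2}$, so that $e^{-c t_n^2 n y_n^4} = e^{-c C^2 n^{1/2}}$, which is summable in $n$. The main obstacle is upgrading the pointwise a.s.\ bound to a bound that is uniform over the compact set $\{|z|\leq M, |x|\leq N, y \geq y_n\}$. I handle this by a standard $\epsilon$-net argument: since $\Delta_n(\alpha,z)$ is Lipschitz in $\alpha$ with constant $\|G^2\| \leq 1/y_n^2$, and Lipschitz in $z$ with a constant that is polynomial in $1/y_n$ (via the derivatives of $R$ and $G$ in $z$), choosing a net of spacing $n^{-A}$ for sufficiently large $A$ gives only polynomially many (in $n$) net points. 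A union bound over the net preserves summability of the tail estimate, and continuity extends the bound to the full set at the cost of a negligible additional error. Borel--Cantelli then yields the claimed a.s.\ bound $O(n^{-1/4}y_n^{-2})$.

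The principal obstacle is the bounded-difference calculation, since a naive use of $|\tr G - \tr G'|\leq 2r/y$ (for $r$-rank perturbations) ignores the $n^{-1/2}$ scale of the entries and is too weak. Exploiting both the rank-$2$ structure of the perturbation to $H_n$ and the entry size (via the derivative formula combined with the norms $\|G\|$, $\|RG\|$) is necessary to produce bounded-difference constants small enough that the $n^2$-fold summation still yields the factor $ny_n^4$ in the exponent.
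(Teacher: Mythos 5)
Your overall architecture (bounded differences plus McDiarmid, then a polynomial net, Borel--Cantelli, and Lipschitz continuity to pass from the net to the full region) is the same as the paper's, but your bounded-difference step has a genuine flaw relative to the statement you are proving. Working entry by entry, your own accounting gives $c_k=O(n^{\delta-3/2}y_n^{-2})$ over $O(n^2)$ coordinates, hence $\sum_k c_k^2=O(n^{2\delta-1}y_n^{-4})$ and a tail bound $4e^{-ct^2 n^{1-2\delta}y_n^4}$. The factor $n^{-2\delta}$ cannot be ``absorbed into the constant $c$'': \eqref{lyon11} asserts an absolute constant, and $n^{-2\delta}\to 0$, so your argument proves a strictly weaker inequality than the one stated (it does still suffice for the second, almost-sure part, since with $t=n^{-1/4}y_n^{-2}$ the exponent $-cn^{1/2-2\delta}$ remains summable for $\delta\leq 1/32$). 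The paper avoids this loss by taking the independent coordinates to be the $mn$ \emph{columns} of $Y$ rather than the individual entries: replacing one column changes $R^\ast R$ by a rank-two perturbation, and the resolvent identity together with $\sup_{t\geq 0}|t(t-\alpha)^{-1}|=O(y_n^{-1})$ gives the per-column oscillation $c_k=O(n^{-1}y_n^{-2})$ as in \eqref{resolv-bound}, whence $\sum_k c_k^2=O(n^{-1}y_n^{-4})$ and exactly the exponent $ny_n^4$ of \eqref{lyon11} --- with no use of the entry truncation at this stage. If you want \eqref{lyon11} as stated, you should switch to this column-wise grouping (or otherwise exploit cancellation beyond the worst-case per-entry bound).

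A secondary point: the region $\{|z|\leq M,\ |x|\leq N,\ y\geq y_n\}$ over which you take the supremum is not compact, since $y$ is unbounded, so a net of spacing $n^{-A}$ over it is not finite. You need the trivial observation that $|\Delta_n(\alpha,z)|\leq 1/\Im\alpha$ (and likewise for its expectation), so that for $y> n^{1/4}y_n^2$ the quantity is automatically below the target $n^{-1/4}y_n^{-2}$; this reduces matters to the bounded slab $y_n\leq y\leq n^{1/4}y_n^2$, on which your net-plus-Lipschitz argument (with Lipschitz constants polynomial in $n$, coming from $\|G^2\|\leq y_n^{-2}$ in $\alpha$ and from $\|Y\|\leq n^{1+\delta}$ in $z$) goes through exactly as in the paper.
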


\begin{proof}
Let $R_k$ denote the matrix $R$ with the $k$-th column replaced by zeroes.  Then $R^\ast R$ and $R^\ast_k R_k$ differ by a matrix with rank at most two.  
So by the resolvent identity
\begin{align} \label{resolv-bound}
	&\left| \frac{1}{mn} \tr \left( R^\ast R - \alpha \right)^{-1} - \frac{1}{mn} \tr \left( R^\ast_k R_k -\alpha \right)^{-1} \right| \nonumber \\
	& \qquad \leq \frac{2}{mn} \left\| \left( R^\ast R - \alpha \right)^{-1} \left( R^\ast_k R_k - R^\ast R \right) \left( R^\ast_k R_k -\alpha \right)^{-1} \right\| \\
	& \qquad \leq \frac{C}{n y_n} \sup_{t \geq 0} \left| (t-\alpha)^{-1}t \right| = C' \frac{1}{n y^2_n}  \nonumber
\end{align}
where the constant $C'$ depends only on $N$.  
The $mn$ columns of $Y$ form an independent family of random variables.  We now apply Lemma \ref{lemma:mcdiarmid} to the complex-valued function 
$\frac{1}{mn} \tr \left( R^\ast R - \alpha \right)^{-1}$ with the bound $c_k = O(n^{-1} y^{-2}_n)$ obtained in \eqref{resolv-bound}.  This proves
the bound (\ref{lyon11}). Thus, for any fixed point $(\alpha, z)$ in the region
\begin{equation}
\label{novreg}
\{ (\alpha=x+iy, z=s+it): |x| \leq N, y \geq y_n,  |z| \leq M \}
\end{equation}
one has
\begin{equation}
\label{lyon12}
	\Prob \left( | \Delta_n(\alpha,z) - \E \Delta_n(\alpha,z) | >  n^{-1/4} \* y_n^{-2} \right) \leq 4 e^{-c\*n^{1/2}}, 
\end{equation}
where we recall that $y_n =n^{-\eta\*\delta}$ and $\delta>0$ could be chosen to be arbitrary small.

If $y=\Im \alpha> n^{1/4}\*y_n^2,$ then 
\begin{equation}
\label{lyon13}
|\Delta_n(\alpha,z)|\leq \frac{1}{\Im \alpha}< n^{-1/4} \* y_n^{-2}, \ \ |\E\Delta_n(\alpha,z)|< n^{-1/4} \* y_n^{-2}.
\end{equation}
Therefore, it is enough to bound the supremum of $|\Delta_n(\alpha) - \E \Delta_n(\alpha) |$ over the region
\begin{equation}
\label{novnovreg}
\mathcal{D}=\{ (\alpha=x+iy, z=s+it): |x| \leq N, \ y_n \leq y \leq n^{1/4}\*y_n^2,  \ |z| \leq M \}.
\end{equation}
To this end, we consider a finite $n^{-C}$-net of $\mathcal{D}$ where $C$ is some sufficiently large positive constant to be chosen later.
Clearly, one can construct such a net that contains at most $[4 \*M \* n^{4\*C} \*n^{1/4}\*y_n^2] $ points if $n$ is sufficiently large, where $[k] $ 
denotes the integer part of $k.$ Let us denote these points by
$(\alpha_i, z_i), 1\leq i  \leq [4\*M  \* n^{4\*C} \*n^{1/4}\*y_n^2].$
It follows from (\ref{lyon12}) that 
one has
\begin{equation}
\label{lyon14}
\Prob \left( \sup \{i:  | \Delta_n(\alpha_i, z_i) - \E \Delta_n(\alpha_i, z_i) | >  n^{-1/4} \* y_n^{-2} \right) \leq 
16\*M \* y_n^2 \* n^{4C +1/4}\*e^{-c\*n^{1/2}},
\end{equation}
where the supremum is taken over the points of the net.
Appying the Borel-Cantelli lemma, we obtain that
\begin{equation}
\label{lyon15}
	\sup \left \{ i:  |\Delta_n(\alpha_i,z_i) - \E\Delta_n(\alpha_i,z_i)| \right\} 
= O \left( n^{-1/4}y_n^{-2} \right) \qquad \text{a.s.}
\end{equation}
where the supremum is again taken over the points of the $n^{-C}$-net of $\mathcal{D}.$
To extend the estimate (\ref{lyon15}) to the supremum over the whole region $\mathcal{D},$ we note that for $(\alpha, z) \in \mathcal{D},$
\begin{align}
\label{brest11}
& \left|\frac{ \partial \Delta_n(\alpha,z)}{\partial \Re \alpha}\right|\leq \frac{1}{y_n^2}, \\
\label{brest12}
& \left|\frac{ \partial \Delta_n(\alpha,z)}{\partial \Im \alpha}\right|\leq \frac{1}{y_n^2}, \\
\label{brest13}
& \left|\frac{ \partial \Delta_n(\alpha,z)}{\partial \Re z}\right|\leq const_m\frac{2\*(n^{1+\delta}+M)}{y_n^2}, \\
\label{brest14}
& \left|\frac{ \partial \Delta_n(\alpha,z)}{\partial \Im z}\right|\leq const_m\frac{2\*(n^{1+\delta}+M)}{y_n^2},
\end{align}
where $const_m$ is a constant that depends only on $m$.

The bounds (\ref{brest11}-\ref{brest12}) are simple properties of the Stieltjes transform.  Indeed, the l.h.s. of (\ref{brest11}) and 
(\ref{brest12}) are bounded from above by $\frac{1}{|\Im \alpha|^2}.$  The proof of (\ref{brest13}-\ref{brest14}) follows from the resolvent identitity
\begin{equation*}
(H_n(z_2) -\alpha \*I)^{-1}-(H_n(z_1) -\alpha \*I)^{-1}= (H_n(z_1) -\alpha \*I)^{-1} \*(H_n(z_2)-H_n(z_1)) \*
(H_n(z_2) -\alpha \*I)^{-1},
\end{equation*}
the formula $ H_n(z)=(Y^{(n)}-z\*I)^*(Y^{(n)}-z\*I),$
the bound $|z|\leq M, $
and the bound
\begin{equation}
\label{sena}
\|Y^{(n)}\| \leq n^{1+\delta}.
\end{equation}
We note that (\ref{sena}) follows from
the fact that the matrix entries of $Y^{(n)}$ are bounded by $n^{\delta}.$

Now, choosing $C$ in the construction of the net sufficiently large, one extends the bound (\ref{lyon15}) to the whole region $\mathcal{D}$ by
(\ref{brest11}-\ref{brest14}).
This finishes the proof of the lemma.  
\end{proof}

\begin{lemma} \label{lemma:variance-bound}
For any $1 \leq a \leq m$, 
\begin{equation*}
	\sup \left\{ \var \left( \frac{1}{n} \tr G_{aa} \right) : |x| \leq N, y\geq y_n, z \in \C \right\} = O( n^{-1}y_n^{-2} )
\end{equation*}
where $\alpha=x+iy$.
\end{lemma}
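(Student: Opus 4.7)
My plan is to prove this by a Doob martingale decomposition on the columns of $Y$, paralleling the argument in the proof of Lemma~\ref{lemma:mcdiarmid-bound}. Denote the columns of $Y$ by $y_1, \ldots, y_{mn}$ and set $\mathcal{F}_k = \sigma(y_1, \ldots, y_k)$. Since the columns are independent,
\[
\tfrac{1}{n}\tr G_{aa} - \E\,\tfrac{1}{n}\tr G_{aa} = \sum_{k=1}^{mn} D_k, \qquad D_k = \bigl(\E[\,\cdot\,|\mathcal{F}_k] - \E[\,\cdot\,|\mathcal{F}_{k-1}]\bigr)\tfrac{1}{n}\tr G_{aa},
\]
is a sum of orthogonal martingale differences, and so $\var\bigl(\tfrac{1}{n}\tr G_{aa}\bigr) = \sum_{k=1}^{mn} \E|D_k|^2$.

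To bound each increment, I would introduce $R^{(k)}$, the matrix obtained from $R$ by zeroing its $k$-th column, and set $G^{(k)} = (R^{(k)*}R^{(k)} - \alpha I)^{-1}$, which is independent of $y_k$. Then $D_k$ is unchanged upon replacing $\tfrac{1}{n}\tr G_{aa}$ by $\tfrac{1}{n}\tr(G - G^{(k)})_{aa}$ inside the two conditional expectations, and Jensen's inequality gives
\[
\E|D_k|^2 \leq \E\Bigl|\tfrac{1}{n}\tr\bigl((G - G^{(k)})_{aa}\bigr)\Bigr|^2.
\]
The resolvent identity $G - G^{(k)} = -G\Delta_k G^{(k)}$ together with $R - R^{(k)} = y_k e_k^T$ gives the explicit form
\[
\Delta_k = R^{(k)*}y_k\, e_k^T + e_k\, y_k^*R^{(k)} + \|y_k\|^2\, e_k e_k^T,
\]
and since $R^{(k)*}y_k$ is orthogonal to $e_k$, the matrix $\Delta_k$, and hence $G - G^{(k)}$, has rank at most $2$. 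Applying the elementary bound $|\tr(PM)| \leq \mathrm{rank}(M)\,\|M\|_{\mathrm{op}}$ with the projection $P_a$ onto block $a$ yields
\[
\bigl|\tr\bigl(P_a(G - G^{(k)})\bigr)\bigr| \leq 2\,\|G\Delta_k G^{(k)}\|_{\mathrm{op}}.
\]

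The remaining work is a careful operator-norm estimate of $G\Delta_k G^{(k)}$. Expanding into its three rank-one pieces and invoking the mixed bound $\|GR^*\| \leq C/y_n$ from Lemma~\ref{lemma:norm-bounds}, together with the rank-one correction $R^{(k)*} = R^* - e_k y_k^*$ (so that $\|GR^{(k)*}\| \leq (C+\|y_k\|)/y_n$), one arrives at an estimate of the form $\|G\Delta_k G^{(k)}\|_{\mathrm{op}} \leq (C/y_n)(\|y_k\|+\|y_k\|^2)$. Under the truncation hypothesis $|\entry[Y]{a}{a+1}{i}{j}|\leq n^{\delta-1/2}$, each $y_k$ has independent entries of variance $O(1/n)$ and bounded fourth moment, so $\E\|y_k\|^2$ and $\E\|y_k\|^4$ are both $O(1)$. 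Squaring, taking expectations, and summing the $mn$ martingale contributions then produces the claimed $O(n^{-1}y_n^{-2})$ bound, uniformly in the stated region. The main obstacle will be precisely this operator-norm bookkeeping: the naive bound $\|G\Delta_k G^{(k)}\|_{\mathrm{op}}\leq\|G\|\|\Delta_k\|\|G^{(k)}\|$ produces only $O(n^{-1}y_n^{-4})$, so the improvement requires replacing one factor of $\|G\|$ by the tighter $\|GR^*\|$ in each of the off-diagonal rank-one pieces of $\Delta_k$ and absorbing the resulting $\|y_k\|$-dependence into the fourth-moment bounds on the column.
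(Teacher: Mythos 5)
Your overall architecture is the same as the paper's (martingale differences over the columns, orthogonality of increments, the rank-two resolvent perturbation $G-G^{(k)}=-G\Delta_k G^{(k)}$, and $|\tr(P_aM)|\le \mathrm{rank}(M)\,\|M\|$), but the pivotal quantitative claim does not hold as stated. With the tools you invoke from Lemma \ref{lemma:norm-bounds} — $\|G\|\le y_n^{-1}$, $\|G^{(k)}\|\le y_n^{-1}$, $\|GR^\ast\|\le C\sqrt{y_n^{-2}+y_n^{-1}}\asymp C y_n^{-1}$, $\|R^{(k)}G^{(k)}\|\le Cy_n^{-1}$ — each of the three rank-one pieces of $G\Delta_kG^{(k)}$ carries \emph{two} resolvent-type factors, so what you actually get is $\|G\Delta_kG^{(k)}\|\le C\,y_n^{-2}(\|y_k\|+\|y_k\|^2)$, not $C\,y_n^{-1}(\|y_k\|+\|y_k\|^2)$: replacing a factor $\|G\|$ by $\|GR^{(k)\ast}\|$ buys nothing in powers of $y_n$, because $\|GR^\ast\|$ is itself of order $y_n^{-1}$, not $O(1)$. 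Squaring, using $\E\|y_k\|^4=O(1)$ and summing the $mn$ increments then yields $\var=O(n^{-1}y_n^{-4})$, not the stated $O(n^{-1}y_n^{-2})$. To genuinely gain a power of $y_n$ per increment you need a finer input than operator-norm bookkeeping, e.g.\ the standard rank-one perturbation inequality $\bigl|\tr A\bigl[(H+qq^\ast-\alpha)^{-1}-(H-\alpha)^{-1}\bigr]\bigr|\le \|A\|/\Im\alpha$, applied twice after writing the Hermitian rank-two $\Delta_k$ as a signed sum of two rank-one Hermitian terms; Lemma \ref{lemma:norm-bounds} alone cannot produce it. (For comparison, the paper's own estimate \eqref{resolvent-id-bound-var} is also only $C/y_n^2$ per increment — obtained by keeping $R^\ast R$ attached to its own resolvent and using $\|R^\ast R(R^\ast R-\alpha)^{-1}\|\le\sup_{t\ge0}|t(t-\alpha)^{-1}|\le C_N/y_n$ — and the resulting $O(n^{-1}y_n^{-4})$ is all that is actually needed later, in Remark \ref{rem:variance-split}; but your write-up asserts the stronger exponent without a mechanism for it.)

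A second, smaller gap: $y_k$ in your decomposition is a column of $R=Y-zI$, hence contains the entry $-z$, so $\E\|y_k\|^2$ and $\E\|y_k\|^4$ are $O(1+|z|^2)$ and $O(1+|z|^4)$, not $O(1)$; your bound is therefore not uniform over $z\in\C$, which is what the supremum in the lemma requires (it is fine for $|z|\le M$). The paper's version of the increment bound is deterministic and never touches the removed column or $z$ at all, which is exactly what makes the uniformity in $z$ legitimate. With the per-increment estimate corrected (either accept $C/y_n^2$ obtained the paper's way, or import the rank-one trace inequality to reach $C/y_n$), the rest of your martingale argument is sound.
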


\begin{proof}
Let $R_k$ denote the matrix $R$ with the $k$-th column replaced by zeroes and let $P_a$ be the orthogonal projector such that 
$\tr G_{a,a} = \tr (P_a G P_a)$.  Following the same procedure as in the proof of Lemma \ref{lemma:mcdiarmid-bound}, we have that
\begin{align} 
	&\left| \tr (R^\ast R - \alpha)^{-1}_{a,a} - \tr (R^\ast_k R_k - \alpha)^{-1}_{a,a} \right| \nonumber \\
	& \qquad = \left| \tr \left[ P_a (R^\ast_k R_k - \alpha)^{-1} (R^\ast_k R_k - R^\ast R)(R^\ast R - \alpha)^{-1} P_a \right] \right| \label{resolvent-id-bound-var} \\
	& \qquad \leq \frac{C}{y^2_n}  \nonumber.
\end{align}
where the constant $C$ depends only on $N$.  

We can write
\begin{equation*}
	\frac{1}{n} \tr G_{a,a} - \E\left[ \frac{1}{n} \tr G_{a,a} \right] = \frac{1}{n} \sum_{k=1}^{mn} \gamma_k,
\end{equation*}
where $\gamma_k$ is the martingale difference sequence
\begin{equation*}
	\gamma_k = \E_k \left[\frac{1}{n} \tr G_{a,a} \right] - \E_{k-1} \left[ \frac{1}{n} \tr G_{a,a} \right]
\end{equation*}
and $\E_k$ denotes the conditional expectation with respect to the elements in the first $k$ columns of $Y$.  Then by the bound 
in \eqref{resolvent-id-bound-var} and \cite[Lemma 2.12]{bai-book}, we have
\begin{align*}
	\E \left| \frac{1}{n} \tr G_{a,a} - \E \frac{1}{n} \tr G_{a,a} \right|^2 &\leq \frac{C}{n^2} \sum_{k=1}^{mn} | \gamma_k |^2 \\
		& \leq \frac{C}{n y^2_n}
\end{align*}
where the constant $C$ depends only on $N$.  Since the bound holds for any $|x| \leq N, y \geq y_n$, and $z \in \C$ the proof is complete.    
\end{proof}

\begin{remark} \label{rem:variance-split}
By Lemmas \ref{lemma:G-distribution}, \ref{lemma:mcdiarmid-bound}, and \ref{lemma:variance-bound}, for every $1 \leq a,b,c \leq m$
\begin{align*}
	\E\left[ \frac{1}{n} \tr G_{a,a} \right] &= \E\left[\frac{1}{mn} \tr G\right] =\frac{1}{mn} \tr G + O(n^{-1/4}y_n^{-5}) \qquad \text{a.s.,} \\
	\E\left[ \frac{1}{n} \tr G_{a,a} \frac{1}{n} \tr G_{b,b} \right] &= \E\left[\left(\frac{1}{mn} \tr G \right)^2\right] + O(n^{-1/4}y_n^{-5}) \\ &=\left(\frac{1}{mn} \tr G \right)^2 + O(n^{-1/4}y_n^{-5}) \qquad \text{a.s.,} 
\end{align*}
and
\begin{align*}
	\E\left[ \frac{1}{n} \tr G_{a,a} \frac{1}{n} \tr G_{b,b} \frac{1}{n} \tr G_{c,c} \right] &= \E\left[ \left(\frac{1}{mn} \tr G \right)^3\right] + O(n^{-1/4}y_n^{-5})\\ &= \left(\frac{1}{mn} \tr G \right)^3 + O(n^{-1/4}y_n^{-5}) \qquad \text{a.s.,} \\
\end{align*}
where the bounds hold uniformly in the region $|x| \leq N, y \geq y_n$, and $|z| \leq M$.  
\end{remark}

We are now ready to prove Theorem \ref{thm:cubic-eq}. 

\begin{proof}[Proof of Theorem \ref{thm:cubic-eq}]
Fix $\alpha = x + iy$ with $|x| \leq N, y \geq y_n$ and $z \in \C$ with $|z| \leq M$.  We will show that the remainder term $r_n(\alpha, z) = O(\delta_n)$ a.s. where the constants in the term $O(\delta_n)$ depend only on $N$ and $M$.  In particular, the remainder term will be estimated using Lemmas \ref{lemma:norm-bounds} and \ref{lemma:mcdiarmid-bound} and Remark \ref{rem:variance-split} where the bounds all hold uniformly in the region.  In the proof presented below, will use the notation $O_{N,M}(\cdot)$ to represent a term which is bounded uniformly in the region $|x| \leq N, y \geq y_n$, and $|z| \leq M$.  

By applying the resolvent identity to $G$ and replacing $R$ and $R^\ast$ with $Y-zI$ and $Y^\ast - \bar{z}I$, respectively, we obtain
\begin{equation*}
	\frac{1}{n} \tr G_{a,a} = -\frac{1}{\alpha} + \frac{1}{\alpha n} \tr [GY^\ast Y]_{a,a} - \frac{z}{\alpha n} \tr[G Y^\ast]_{a,a} - \frac{\bar{z}}{\alpha n} \tr [GY]_{a,a} + \frac{|z|^2}{\alpha n} \tr G_{a,a}.
\end{equation*}

We will let $Y^{(r)}$ be the $(mn) \times (mn)$ matrix containing the real entries of $Y$ and $Y^{(i)}$ be the $(mn) \times (mn)$ matrix containing the imaginary entries of $Y$ such that $Y = Y^{(r)} + i Y^{(i)}$.  By assumption, $Y^{(r)}$ and $Y^{(i)}$ are independent random matrices.  Thus,
\begin{align} 
	\left(1 - \frac{|z|^2}{\alpha}\right) \E\frac{1}{n} \tr G_{a,a} + \frac{1}{\alpha} &= \frac{1}{\alpha n} \E \tr (GY^\ast Y^{(r)})_{a,a} + \frac{i}{\alpha n} \E \tr(G Y^\ast Y^{(i)})_{a,a} \nonumber \\
	& \qquad - \frac{z}{\alpha n} \E \tr(G {Y^{(r)}}^\ast)_{a,a} + \frac{iz}{\alpha n} \E \tr (G {Y^{(i)}}^\ast)_{a,a} \label{resolvent-expansion}\\
	& \qquad -\frac{\bar{z}}{\alpha n} \E \tr(GY^{(r)})_{a,a} - \frac{\bar{z}i}{\alpha n} \E \tr(G Y^{(i)})_{a,a} \nonumber 
\end{align}
Let $\delta=\var(\Re(\xi))$.  Then $\var(\Im(\xi)) = 1-\delta$.  To compute the expectation, we fix all matrix entries except one and integrate with respect to that entry.  Thus, by applying the decoupling formula \eqref{decoupling} with $p=1$ and using the fact that $\entry[Y]{a}{b}{i}{j} = 0$ whenever $b \neq a+1$, we obtain the following expansions for the terms on the right-hand side of \eqref{resolvent-expansion},
\begin{align*}
	\frac{1}{\alpha n} \E \tr (GY^\ast Y^{(r)})_{a,a} &= \frac{1}{\alpha n} \E \sum_{1 \leq j,k,l \leq n} \entry{a}{a}{j}{k} \entry[\bar{Y}]{a-1}{a}{l}{k} \Re \left( \entry[Y]{a-1}{a}{l}{j} \right) \\
	&= \frac{\delta}{\alpha n} \E \tr G_{a,a} - \frac{\delta}{\alpha n^2} \E\sum_{1 \leq j,k,l \leq n} \entry[\bar{Y}]{a-1}{a}{l}{k} \left( \entry[(GR^\ast)]{a}{a-1}{j}{l} \entry{a}{a}{j}{k}\right) \\
	& \qquad - \frac{\delta}{\alpha n^2} \E\sum_{1 \leq j,k,l \leq n} \entry[\bar{Y}]{a-1}{a}{l}{k} \left( \entry{a}{a}{j}{j} \entry[(RG)]{a-1}{a}{l}{k}\right) + O_{N,M} \left(\frac{n^\delta}{n^{1/2}y_n^4}\right) \\
	&= \frac{\delta}{\alpha n} \E \tr G_{a,a} - \frac{\delta}{\alpha n^2} \E \left[ \tr G_{a,a} \tr (RGY^\ast)_{a-1, a-1} \right] + O_{N,M} \left( \frac{n^\delta}{n^{1/2}y_n^4} \right).
\end{align*}
Here we use that the $\epsilon$ error term in \eqref{decoupling} contains the second derivative
\begin{equation*}
	\frac{ \partial^2 \entry[(GY^\ast)]{a}{a}{j}{l} }{\partial \Re \left( \entry[Y]{a-1}{a}{l}{j} \right)^2}  = O_{N,M} \left( \frac{1}{y_n^3} \right)
\end{equation*}
which consists of several terms each bounded by Lemma \ref{lemma:norm-bounds}.  After summing over $1 \leq j,l \leq n$ and utilizing the fact that the third moment of $\Re \left(\entry[Y]{a-1}{a}{l}{j} \right)$ is of order $n^{\delta-3/2}$, we obtain an error bound of $O_{N,M}(n^{\delta-1/2}y_n^{-4})$.  By following the same procedure for the other terms, we obtain
\begin{align*}
	\frac{i}{\alpha n} \E \tr(G Y^\ast Y^{(i)})_{a,a} &= \frac{i}{\alpha n} \E \sum_{1 \leq j,kl \leq n} \entry{a}{a}{j}{k} \entry[\bar{Y}]{a-1}{a}{l}{k} \Im \left( \entry[Y]{a-1}{a}{l}{j} \right) \\
	&= \frac{1-\delta}{\alpha n} \E \tr G_{a,a} - \frac{1-\delta}{\alpha n^2} \E \left[ \tr G_{a,a} \tr (RGY^\ast)_{a-1,a-1}\right] + O_{N,M} \left( \frac{n^\delta}{n^{1/2}y_n^4} \right) , \\
	\frac{z}{\alpha n} \E \tr(G {Y^{(r)}}^\ast)_{a,a} &= \frac{z}{\alpha n} \E \sum_{1 \leq j,k \leq n} \entry{a}{a}{j}{k} \Re\left(\entry[Y]{a}{a+1}{j}{k} \right) \\
	&= \frac{z \delta}{\alpha n^2} \E \left[ \tr G_{a+1}{a+1} \tr(GR^\ast)_{a,a} \right] + O_{N,M} \left( \frac{n^\delta}{n^{1/2}y_n^4} \right), \\
	\frac{iz}{\alpha n} \E \tr (G {Y^{(i)}}^\ast)_{a,a} &= \frac{iz}{\alpha n} \E \sum_{1 \leq j,k \leq n} \entry{a}{a+1}{j}{k} \Im\left( \entry[Y]{a}{a+1}{j}{k} \right) \\
	&= \frac{z(1-\delta)}{\alpha n^2} \E \left[ \tr G_{a+1,a+1} \tr (GR^\ast)_{a,a} \right] + O_{N,M} \left( \frac{n^\delta}{n^{1/2}y_n^4} \right), \\
	\frac{\bar{z}}{\alpha n} \E \tr(GY^{(r)})_{a,a} &= \frac{\bar{z}}{\alpha n} \E \sum_{1 \leq j,k \leq n} \entry{a}{a-1}{j}{k} \Re \left( \entry[Y]{a-1}{a}{k}{j} \right) \\
	&= \frac{\bar{z}\delta}{\alpha n^2} \E \left[ \tr G_{a,a} \tr (RG)_{a-1,a-1} \right] + O_{N,M} \left( \frac{n^\delta}{n^{1/2}y_n^4} \right), 
\end{align*}
and
\begin{align*}
	\frac{\bar{z}i}{\alpha n} \E \tr(G Y^{(i)})_{a,a} &= \frac{i \bar{z}}{\alpha n} \E \sum_{1 \leq j,k \leq n} \entry{a}{a-1}{j}{k} \Im\left( \entry[Y]{a-1}{a}{k}{j} \right) \\
	&= \frac{\bar{z}(1-\delta)}{\alpha n^2} \E \left[ \tr G_{a,a} \tr (RG)_{a-1,a-1} \right] + O_{N,M} \left( \frac{n^\delta}{n^{1/2}y_n^4} \right).
\end{align*}
Combining these terms yields,
\begin{align*}
	\left(1 - \frac{|z|^2}{\alpha}\right) \E\frac{1}{n} \tr G_{a,a} &= - \frac{1}{\alpha} + \frac{1}{\alpha n} \E \tr G_{a,a} - \frac{1}{\alpha n^2} \E \left[ \tr G_{a,a} \tr (RGR^\ast)_{a-1,a-1} \right] \\
	& \qquad + \frac{z}{\alpha n^2} \E \left[ \tr G_{a+1,a+1} \tr (GR^\ast)_{a,a} \right] + O_{N,M} \left( \frac{n^\delta}{n^{1/2}y_n^4} \right) \\
	&= - \frac{1}{\alpha} + \frac{1}{\alpha n} \E \tr G_{a,a} - \frac{1}{\alpha n^2} \E \left[ \tr G_{a,a} \tr (RGR^\ast)_{a-1,a-1} \right] \\
	& \qquad + \frac{z}{\alpha n^2} \E \left[ \tr G_{a+1,a+1} \tr (GR^\ast R)_{a,a} \right] \\
	& \qquad + \frac{1}{\alpha n^2} \E \left[ \tr G_{a+1,a+1} \tr(GR^\ast Y)_{a,a} \right] +O_{N,M} \left( \frac{n^\delta}{n^{1/2}y_n^4} \right).
\end{align*}
We note that by Remark \ref{rem:variance-split}, we have that
\begin{align*}
	\frac{-1}{\alpha n^2} \E\left[ \tr G_{a+1,a+1} \tr (GR^\ast R)_{a,a} \right] &= -\frac{1}{\alpha n} \E \tr G_{a+1,a+1} \\
	& \qquad- \frac{1}{mn^2} \E \tr G_{a,a} \E \tr G + O_{N,M}(\delta_n)
\end{align*}
and
\begin{align*}
	\frac{-1}{\alpha n^2} \E\left[ \tr G_{a,a} \tr (RGR^\ast)_{a-1,a-1} \right] &= -\frac{1}{\alpha n^2} \E \left[ \tr G_{a,a} \tr (R^\ast RG)_{a,a} \right] \\
	& \qquad - \frac{|z|^2}{\alpha n^2} \E \left[ \tr G_{a,a} ( \tr G_{a,a} - \tr G_{a-1,a-1}) \right] \\
	&= -\frac{1}{\alpha n} \E \tr G_{a,a} - \frac{1}{mn^2} \E \tr G_{a,a} \E \tr G + O_{N,M}(\delta_n).
\end{align*}

Finally, we expand $Y$ in terms of $Y^{(r)}$ and $Y^{(i)}$ and again apply the decoupling formula \eqref{decoupling} to obtain
\begin{align*}
	\frac{1}{\alpha n^2} \E \left[ \tr G_{a+1,a+1} \tr (GR^\ast Y)_{a,a} \right] &= -\frac{1}{n^3} \E \left[ \tr G_{a+1,a+1} \tr G_{a,a} \tr G_{a,a} \right] + O_{N,M} \left( \frac{n^\delta}{n^{1/2}y_n^4} \right) \\
	&= -\frac{1}{n^3 m^2} \left( \E \tr G \right)^2 \E \tr G_{a,a} + O_{N,M}(\delta_n),
\end{align*}
where the last equality comes from Remark \ref{rem:variance-split}.  Therefore, we have that
\begin{align*}
	\left(1 - \frac{|z|^2}{\alpha}\right) \E\frac{1}{n} \tr G_{a,a} &= - \frac{1}{\alpha} -\frac{1}{\alpha n} \E \tr G_{a+1,a+1} - \frac{2}{mn^2} \E \tr G_{a,a} \E \tr G \\
	& \qquad- \frac{1}{n^3 m^2} \left( \E \tr G \right)^2 \E \tr G_{a,a} + O_{N,M}(\delta_n).
\end{align*}
By summing over $a$ and dividing by $m$, we obtain
\begin{align*}
	\left( \E \Delta_n(\alpha) \right)^3 + 2 \left( \E \Delta_n(\alpha) \right)^2 + \frac{1 + \alpha - |z|^2}{\alpha} \E \Delta_n(\alpha) + \frac{1}{\alpha} = O_{N,M}(\delta_n).
\end{align*}
Thus, the proof is complete by Lemma \ref{lemma:mcdiarmid-bound}.
\end{proof}

Consider the cubic equation 
\begin{equation} \label{cubic-eq}
	\Delta^3 + 2 \Delta^2 + \frac{\alpha+1-|z|^2}{\alpha} \Delta + \frac{1}{\alpha} = 0
\end{equation}
where $\alpha = x + iy$.  The solution of the equation has three analytic branches when $\alpha \neq 0$ and when there is no multiple root.  Below we show 
that the Stieltjes transform of $\nu_n(\cdot, z)$ converges to a root of \eqref{cubic-eq}.  Following the argument of Bai and Silverstein in 
\cite{bai-book}, we have that there is only one of the three analytic branches, denoted by $\Delta(\alpha)$, to which the Stieltjes transforms are 
converging to.  We let $m_2(\alpha)$ and $m_3(\alpha)$ denote the other two branches and note that $\Delta$, $m_2$, and $m_3$ are also functions of $|z|$.  

By \cite[Theorem B.9]{bai-book}, there exists a distribution function $\nu(\cdot, z)$ such that 
\begin{equation*}
	\Delta(\alpha) = \int \frac{1}{u-\alpha} \nu(\ud u, z).
\end{equation*}
Then we use the following Lemmas due to Bai and Silverstein, \cite{bai-book}.

\begin{lemma}\label{lemma:nu-prop}
The limiting distribution function $\nu(x,z)$ satisfies
\begin{equation*}
	| \nu(w+u, z) - \nu(w,z)| \leq \frac{2}{\pi} \max \{ 2\sqrt{3|u|}, |u| \}
\end{equation*}
for all $z$.  Also, the limiting distribution function $\nu(u,z)$ has support in the interval $[x_1, x_2]$ when $|z| > 1$ and 
$[0, x_2]$ when $|z| \leq 1$, where
\begin{align*}
	x_1 &= \frac{1}{8 |z|^2} \left[ -1 + 20 |z|^2 + 8|z|^4 - (\sqrt{1+8|z|^2})^3 \right], \\
	x_2 &= \frac{1}{8 |z|^2} \left[ (\sqrt{1+8|z|^2})^3 - 1 + 20|z|^2 + 8 |z|^4 \right].
\end{align*}
\end{lemma}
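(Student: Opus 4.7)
The plan is to handle the two assertions---the H\"older-type continuity bound and the explicit description of the support---separately, both using the cubic equation \eqref{cubic-eq}.

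For the support, I would rewrite \eqref{cubic-eq} as $\alpha = F(\Delta)$ with
\[
F(\Delta) := \frac{(|z|^2 - 1)\Delta - 1}{\Delta(\Delta + 1)^2}.
\]
The support of $\nu(\cdot, z)$ on $\R$ consists of the $x$ at which the analytic branch of the cubic selected by the condition $\Delta(\alpha) \to 0$ as $|\alpha| \to \infty$ acquires nonzero imaginary part; its endpoints are the critical values of $F$ on the real $\Delta$-axis. Setting $F'(\Delta) = 0$ and clearing denominators reduces to the quadratic
\[
2(|z|^2-1)\Delta^2 - 3\Delta - 1 = 0,
\]
with roots $\Delta_{\pm} = \frac{3 \pm s}{4(|z|^2 - 1)}$, $s := \sqrt{8|z|^2 + 1}$. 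Substituting back into $F$, using the quadratic relation to eliminate $\Delta_{\pm}^2$, and then rationalizing via the algebraic identity $(8|z|^4 + 20|z|^2 - 1)^2 - s^6 = 64 |z|^2 (|z|^2 - 1)^3$, I obtain $F(\Delta_{\pm}) = \frac{8|z|^4 + 20|z|^2 - 1 \mp s^3}{8|z|^2}$, matching $x_1$ and $x_2$. Since $\nu(\cdot, z)$ is the weak limit of ESDs of the positive semi-definite matrices $H_n$, it is supported in $[0, \infty)$: for $|z| > 1$ both endpoints are positive and the support is $[x_1, x_2]$, while for $|z| \leq 1$ the sign of $(|z|^2-1)^3$ forces $x_1 \leq 0$ and the support reduces to $[0, x_2]$.

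For the continuity bound, I would combine Stieltjes inversion with the Poisson-kernel inequality
\[
\nu([w, w+u]) \leq 2u\, \Im\Delta(w + iu),
\]
obtained by lower-bounding $\Im\Delta(w+iu) = \int \frac{u\,\ud\nu(x)}{(x-w)^2 + u^2}$ by its contribution from $[w, w+u]$, and then estimate $\Im\Delta(w + iu)$ from above. The trivial Stieltjes bound $\Im\Delta \leq 1/u$ together with the fact that $\nu$ is a probability measure ($\nu([w,w+u]) \leq 1$) gives the $|u|$ branch of the inequality for $|u| \geq \pi/2$. For small $|u|$, I would derive a bound of the form $\Im\Delta(w+iu) \leq \frac{2\sqrt{3}}{\pi\sqrt{u}}$ by writing $\Delta = a + ib$ in \eqref{cubic-eq} at $\alpha = w + iu$, separating real and imaginary parts into a coupled pair of polynomial identities for $(a,b)$, and using the selection rule for the correct branch to rule out spurious roots; this yields $\nu([w, w+u]) \leq \frac{4\sqrt{3}}{\pi}\sqrt{u}$, the $\sqrt{|u|}$ branch.

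The hard part will be converting \eqref{cubic-eq} into the quantitative bound on $\Im\Delta(w+iu)$ with the explicit constants $\tfrac{2}{\pi}$ and $2\sqrt{3}$ demanded by the statement. The coupled real and imaginary part identities must be analyzed simultaneously, with careful tracking of which root of the cubic is the Stieltjes transform, and the bound must hold uniformly in $w \in \R$, in $|z|$ bounded, and in $u>0$. The remaining ingredients---the quadratic for critical points of $F$, the algebraic simplifications producing $x_1, x_2$, and the Poisson-kernel inequality---are all routine.
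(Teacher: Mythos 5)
The paper does not prove this lemma at all: it is quoted from Bai and Silverstein \cite{bai-book}, the point being that the cubic equation of Theorem \ref{thm:cubic-eq} is the same one that arises in the single-matrix circular law, so $\nu(\cdot,z)$ is the same limiting distribution studied there and its properties are imported wholesale. Your attempt to prove it from \eqref{cubic-eq} is therefore doing more than the paper does, and the support half of your argument is essentially right and complete: inverting the cubic to $\alpha=F(\Delta)$ with $F(\Delta)=\frac{(|z|^2-1)\Delta-1}{\Delta(\Delta+1)^2}$ is correct, $F'(\Delta)=0$ does reduce to $2(|z|^2-1)\Delta^2-3\Delta-1=0$, your identity $(8|z|^4+20|z|^2-1)^2-(1+8|z|^2)^3=64|z|^2(|z|^2-1)^3$ checks out, and the critical values are indeed $x_1,x_2$ (numerically verified), with $x_1x_2=(|z|^2-1)^3/|z|^2\le 0$ for $|z|\le 1$ forcing $x_1\le 0$ so that positivity of $H_n$ gives support in $[0,x_2]$. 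The only soft spot there is that you invoke the "endpoints are the real critical values of $F$" principle (Silverstein--Choi / Marchenko--Pastur style) without justifying which critical points are the relevant ones for the branch selected by $\Delta(\alpha)\to 0$ as $|\alpha|\to\infty$; that is standard but should be said.

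The continuity bound, however, is left with a genuine gap. The Poisson-kernel step $\nu([w,w+u])\le 2u\,\Im\Delta(w+iu)$ and the trivial disposal of large $|u|$ via $\nu\le 1$ are fine, but the entire content of the first inequality of the lemma is the quantitative estimate $\Im\Delta(w+iu)\le \frac{2\sqrt{3}}{\pi\sqrt{u}}$ uniformly in $w$ and $z$, and you do not derive it; you state the plan (separate real and imaginary parts of the cubic, track the branch) and explicitly defer it as "the hard part." As written, nothing rules out a larger constant or a worse dependence on $u$, so the stated bound $\frac{2}{\pi}\max\{2\sqrt{3|u|},|u|\}$ is not established. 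A concrete way to close it: for real $\alpha=x$ the cubic has real coefficients, so its roots are $a\pm ib$ and $c$, and the elementary symmetric function relations give $2a+c=-2$ and $b^2=3a^2+4a+1+\frac{1-|z|^2}{x}$, together with $c=-\frac{1}{x(a^2+b^2)}$; bounding $a$ from these relations yields a density bound of the form $\rho_\nu(x)=\frac{1}{\pi}b(x)\le \frac{C}{\pi\sqrt{x}}$, which upon integration over $[w,w+u]$ gives the $\sqrt{|u|}$ branch directly (and avoids the Poisson kernel altogether). Until that estimate, with its constants, is actually carried out, the first assertion of the lemma remains unproved in your write-up.
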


\begin{lemma}\label{lemma:branch-bounds}
For any given constants $N>0$, $A>0$, and $\epsilon \in (0,1)$ (recall that $A$ and $\epsilon$ are used to define the region $T$), there exist positive 
constants $\epsilon_1$ and $\epsilon_0$ ($\epsilon_0$ may depend on $\epsilon_1$) such that for all large $n$, 
\begin{enumerate}[(i)]

\item for $|\alpha| \leq N$, $y \geq 0$, and $z \in T$,
\begin{equation*}
	\max_{j=2,3} | \Delta(\alpha) - m_j(\alpha)| \geq \epsilon_0, 
\end{equation*}

\item for $|\alpha| \leq N$, $y \geq 0$, $|\alpha - x_2| \geq \epsilon_1$ (and $|\alpha-x_1| \geq \epsilon_1$ if $|z| \geq 1 + \epsilon$), and $z \in T$,
\begin{equation*}
	\min_{j=2,3} | \Delta(\alpha) - m_j(\alpha)| \geq \epsilon_0, 
\end{equation*}

\item for $z \in T$ and $|\alpha-x_2| < \epsilon_1$,
\begin{equation*}
	\min_{j=2,3} | \Delta(\alpha) - m_j(\alpha)| \geq \epsilon_0 \sqrt{|\alpha - x_2|},
\end{equation*}

\item for $|z| > 1 + \epsilon$, $z \in T$, and $|\alpha-x_1| < \epsilon_1$,
\begin{equation*}
	\min_{j=2,3} | \Delta(\alpha) - m_j(\alpha)| \geq \epsilon_0 \sqrt{|\alpha - x_1|}.
\end{equation*}
\end{enumerate}
\end{lemma}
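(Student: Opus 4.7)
The plan is to exploit the algebraic structure of the cubic \eqref{cubic-eq}: the three branches $\Delta, m_2, m_3$ are exactly the three roots of a degree-three polynomial whose coefficients are rational functions of $\alpha$ and $|z|^2$. I will control pairwise distances of these roots through the discriminant $D(\alpha, z)$ of the cubic in $\Delta$, which vanishes precisely when two branches coalesce. Since $D$ is a rational function of $(\alpha, |z|^2)$, it depends continuously on $(\alpha, z)$ on compact subsets of $\{\alpha \neq 0\}$.

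First I would identify the zero set of $D$ on the closed upper half-plane in $\alpha$. For real $\alpha$ outside the support of $\nu(\cdot,z)$, all three branches are real (by Vieta and the real-coefficient cubic), while on the open interval where $\nu$ has positive density the physical branch satisfies $\Im \Delta > 0$, forcing $m_2 = \bar m_3$ and all three to be distinct. Continuity of roots in the coefficients, combined with Lemma \ref{lemma:nu-prop} which pinpoints the endpoints $x_1(z), x_2(z)$ of the support, then shows that $D(\alpha, z)$ vanishes on the real axis exactly at $\alpha = x_2(z)$ (and at $\alpha = x_1(z)$ when $|z| > 1$, while for $|z|<1$ the zero at $x_1=0$ has already been absorbed in the pole of $1/\alpha$). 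Moreover, $x_1(z)$ and $x_2(z)$ depend continuously on $z$ for $z \in T$.

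For part (ii), a straightforward compactness argument handles the claim: on the compact region $\{|\alpha| \leq N, y \geq 0, z \in T, |\alpha - x_2(z)| \geq \epsilon_1 \text{ (and } |\alpha - x_1(z)| \geq \epsilon_1 \text{ if } |z|\geq 1+\epsilon)\}$, the discriminant is bounded away from zero, and uniform bounds on the roots (via Vieta and the bound $|\alpha|\leq N$, $|\alpha|$ bounded below when needed) give $\min_{j=2,3}|\Delta - m_j| \geq \epsilon_0$. Parts (iii) and (iv) then require a local Puiseux-type analysis near a double root. Writing the cubic in the form $(\Delta - \Delta_0)^2 (\Delta - \Delta_1) + \text{(perturbation)} = 0$ where $\Delta_0 = \Delta_0(z) = \Delta(x_2(z), z)$ is the coalescing value and $\Delta_1$ is the remaining root, and Taylor expanding the coefficients in $\alpha$ around $x_2(z)$, one obtains
\[ (\Delta - \Delta_0)^2 = c(z)(\alpha - x_2(z)) + O(|\alpha - x_2(z)|^{3/2}) \]
with $c(z) \neq 0$ continuous on $T$. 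Solving via the square root yields $|\Delta - m_j| \geq \epsilon_0 \sqrt{|\alpha - x_2|}$ for the coalescing pair, while the third branch stays at distance of order one. The argument at $x_1$ for $|z| > 1 + \epsilon$ is identical.

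For part (i), I note that a triple root would require $\Delta_0 = -2/3$ together with both algebraic conditions $(\alpha+1-|z|^2)/\alpha = 4/3$ and $1/\alpha = -8/27$, hence a single point $(\alpha, |z|^2) = (-27/8, 17/8)$. By choosing $\epsilon_0$ small and possibly shrinking the neighbourhood in (ii), either this exceptional point is excluded by the $y \geq 0$ constraint together with the fact that $\Delta$ is the specific Stieltjes-transform branch (whose imaginary part has the correct sign), or it is handled by observing that at such a point two of the branches are already well separated from the third by a constant by a direct expansion. The main obstacle I anticipate is precisely this triple-root analysis in part (i): the discriminant alone only prevents simultaneous coalescence of pairs, so one must combine the discriminant analysis with the distinguished analytic/sign characterization of the physical branch $\Delta$ to rule out the joint-coalescence scenario uniformly over $z \in T$.
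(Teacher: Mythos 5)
First, note that the paper does not actually prove this lemma: it is quoted from Bai and Silverstein, since the cubic \eqref{cubic-eq} is identical to the one arising in the $m=1$ circular-law analysis, so your discriminant-based route is being compared with the standard argument rather than with a proof in this paper. Your overall strategy (discriminant controls pairwise collisions, compactness off the collision set, square-root Puiseux expansion at the edges) is the right one, but there is a genuine gap in the way you locate the collision set. For the cubic \eqref{cubic-eq} with $t=|z|^2$ one computes the discriminant to be $\bigl(4t\alpha^{2}+(1-20t-8t^{2})\alpha-4(1-t)^{3}\bigr)/\alpha^{3}$, and since $(1-20t-8t^2)^2+64t(1-t)^3=(1+8t)^3$, its zeros in $\alpha$ are exactly $x_1(z)$ and $x_2(z)$ for \emph{every} $t>0$. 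In particular, for $|z|<1$ the discriminant still vanishes at $\alpha=x_1(z)<0$ (the two roots of the quadratic have product $-(1-t)^3/t<0$ there), so your claim that the real zero set is "exactly $x_2$, and $x_1$ only when $|z|>1$" is false. This breaks the compactness step in your part (ii): the region in (ii) with $|z|\leq 1-\epsilon$ only excludes a neighbourhood of $x_2$, hence it contains points $\alpha$ near $x_1(z)<0$ where the discriminant is \emph{not} bounded away from zero. The conclusion of (ii) is still true there, but only because the coalescing pair at $x_1$ is $m_2,m_3$ and does not involve $\Delta$ (for real $\alpha<0$ one has $\Delta(\alpha)=\int \nu(\ud u,z)/(u-\alpha)>0$ by Lemma \ref{lemma:nu-prop}, and the double-root value stays away from it); so the branch-identification input you defer to part (i) is in fact indispensable already in part (ii), and without it your proof of (ii) is incomplete. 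A related, smaller point: the region contains $\alpha$ near $0$, where the coefficients blow up, so the "continuous discriminant on a compact set" argument needs a separate (easy) treatment there, using that two roots are of size $|\alpha|^{-1/2}$ and mutually separated when $||z|-1|\geq\epsilon$.

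Second, your triple-root analysis in (i) contains a sign error that creates a spurious obstacle. For the monic cubic the product of the roots is $-1/\alpha$, so a triple root $\Delta_0=-2/3$ forces $1/\alpha=8/27$, i.e.\ $\alpha=27/8$, and then $(\alpha+1-|z|^2)/\alpha=4/3$ forces $|z|^2=-1/8$, which is impossible. Hence no triple root exists for any real $z$ at all; your exceptional point $(\alpha,|z|^2)=(-27/8,17/8)$ is an artifact of taking the product of roots to be $+1/\alpha$, and the "main obstacle" you anticipate in (i) does not arise: (i) follows from the impossibility of the triple-root conditions together with compactness (plus the separate $\alpha\to0$ remark above). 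With the discriminant zero set corrected, the branch identification at $x_1$ for $|z|<1$ supplied, and the sign fixed, your outline does give the lemma, with (iii) and (iv) handled by the local square-root expansion exactly as you describe.
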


\begin{remark}
Lemma \ref{lemma:branch-bounds} shows that away from the real line, $\Delta$ is distinct from the branches $m_2$ and $m_3$.  
\end{remark}

\begin{lemma} \label{lemma:nu-circular}
We have 
\begin{equation*}
	\frac{\partial}{\partial s} \int_{0}^\infty \ln x \nu(\ud x, z) = g(s,t).
\end{equation*}
\end{lemma}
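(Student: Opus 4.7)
The plan is to reduce the statement to a one-variable calculation and then use implicit differentiation of the cubic \eqref{cubic-eq}. Since the coefficients of \eqref{cubic-eq} depend on $z$ only through $\rho := |z|^2$, so does the physical branch $\Delta(\alpha,z)$, and hence the distribution $\nu(\cdot,z)$. Writing $L(\rho) := \int_0^\infty \ln x \, \nu(\ud x,z)$, one has $\partial_s L = 2s\,L'(\rho)$, so the claim reduces to showing $L'(\rho) = 1/\rho$ for $\rho > 1$ and $L'(\rho) = 1$ for $\rho < 1$, matching the two branches of $g(s,t)/(2s)$.

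To realize this, I would use the Frullani-type identity $\ln x = \int_0^\infty [(1+\lambda)^{-1} - (x+\lambda)^{-1}]\,\ud\lambda$ (valid for $x > 0$) to write
\[
L(\rho) = \int_0^\infty \Bigl[ \tfrac{1}{1+\lambda} - \Delta(-\lambda,\rho) \Bigr] \, \ud\lambda.
\]
Integrability at $\lambda = \infty$ follows from $\Delta(-\lambda,\rho) = \lambda^{-1} + O(\lambda^{-2})$; at $\lambda = 0^+$ it follows from Lemma \ref{lemma:nu-prop} (the support of $\nu$ stays bounded away from $0$ when $\rho > 1$) and from the leading-order balance $\Delta(-\lambda,\rho) \sim \sqrt{(1-\rho)/\lambda}$ extracted from the cubic when $\rho < 1$, which is still integrable at $0$. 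Differentiating under the integral gives $L'(\rho) = -\int_0^\infty \partial_\rho \Delta(-\lambda,\rho) \, \ud\lambda$. At $\alpha = -\lambda$ the cubic rewrites as $\lambda\,\Delta(\Delta+1)^2 = 1 + (1-\rho)\Delta$, and implicit differentiation produces $\partial_\rho \Delta = -\Delta/D$ and $\partial_\lambda \Delta = -\Delta(\Delta+1)^2/D$ with $D := \lambda(\Delta+1)(3\Delta+1) - (1-\rho)$.

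Finally I would change variables from $\lambda$ to $u := \Delta(-\lambda,\rho)$, which is legitimate because $\Delta(-\lambda,\rho) = \int (v+\lambda)^{-1} \nu(\ud v,\rho)$ is strictly decreasing in $\lambda$. The two Jacobian formulas combine into the clean identity $\partial_\rho \Delta \,\ud\lambda = \ud u / (u+1)^2$. The endpoints are $u(\lambda = \infty) = 0$ always, and $u(\lambda = 0^+) = 1/(\rho-1)$ when $\rho > 1$ (the cubic at $\lambda = 0$ degenerates to the linear equation $(1-\rho)\Delta + 1 = 0$) and $u(\lambda = 0^+) = +\infty$ when $\rho < 1$ (from the $\lambda^{-1/2}$ blow-up noted above). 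Hence
\[
L'(\rho) = -\int_{u(0^+)}^{0} \frac{\ud u}{(u+1)^2} = 1 - \frac{1}{u(0^+)+1} = \begin{cases} 1/\rho & \text{if } \rho > 1, \\ 1 & \text{if } \rho < 1, \end{cases}
\]
and multiplying by $2s$ recovers $g(s,t)$ as required. The main technical obstacle is justifying the differentiation under the integral and the change of variables, for which one needs uniform control on $\partial_\rho \Delta$ for $\lambda$ in compact subintervals of $(0,\infty)$ and the simple-root structure of the physical branch; Lemma \ref{lemma:nu-prop} provides the support information about $\nu$ that underpins both.
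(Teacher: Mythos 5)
Your argument is correct, but it is genuinely different in character from what the paper does: the paper offers no proof of this lemma at all, citing it (together with Lemmas \ref{lemma:nu-prop} and \ref{lemma:branch-bounds}) to Bai--Silverstein \cite{bai-book}, whereas you give a self-contained derivation directly from the cubic \eqref{cubic-eq}. I checked your computation: multiplying \eqref{cubic-eq} by $\alpha$ and setting $\alpha=-\lambda$ gives $\lambda\Delta(\Delta+1)^2=1+(1-\rho)\Delta$ with $\rho=|z|^2$; implicit differentiation gives $\partial_\rho\Delta=-\Delta/D$, $\partial_\lambda\Delta=-\Delta(\Delta+1)^2/D$ with $D=\lambda(\Delta+1)(3\Delta+1)-(1-\rho)$, so indeed $\partial_\rho\Delta\,\ud\lambda=\ud u/(u+1)^2$ under $u=\Delta(-\lambda,\rho)$; the endpoints $u(\infty)=0$, $u(0^+)=1/(\rho-1)$ for $\rho>1$ and $u(0^+)=+\infty$ for $\rho<1$ then yield $L'(\rho)=1/\rho$ and $L'(\rho)=1$ respectively, and $\partial_s L=2sL'(\rho)=g(s,t)$. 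As a sanity check, the corresponding antiderivatives $L(\rho)=\ln\rho$ for $\rho>1$ and $L(\rho)=\rho-1$ for $\rho<1$ are exactly twice the logarithmic potential of the circular law, as they must be. The technical points you flag are real but all closable with what is available in the paper: absolute integrability of $\ln x$ against $\nu(\cdot,z)$ (needed for the Frullani/Fubini step) follows from the H\"older-$1/2$ continuity in Lemma \ref{lemma:nu-prop}, which gives $\nu([0,u],z)\leq C\sqrt{u}$; the nonvanishing of $D$ along the physical branch (needed for the implicit differentiation and the change of variables) follows from the identity $D\,\partial_\lambda\Delta=-\Delta(\Delta+1)^2$ together with the strict monotonicity $\partial_\lambda\Delta(-\lambda)=-\int(x+\lambda)^{-2}\nu(\ud x,z)<0$ and $\Delta(-\lambda)>0$; and the endpoint $u(0^+)=+\infty$ for $\rho<1$ can be obtained without the $\lambda^{-1/2}$ asymptotics, since a finite limit would force $\Delta(0^+)=-1/(1-\rho)<0$, contradicting positivity. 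What your route buys is a short, explicit verification tied to the specific cubic derived in Theorem \ref{thm:cubic-eq}, at the cost of redoing (in outline) analysis that the paper simply imports from \cite{bai-book}.
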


\begin{remark}
Lemma \ref{lemma:nu-circular} shows that $\nu(\cdot, z)$ is the distribution which corresponds to the circular law.  
\end{remark}

\subsection{Rate of Convergence of $\nu_n(x,z)$}
For this subsection, we return to the original assumptions on the entries of $Y$.  Before we prove Lemma \ref{lemma:Y-circular}, we need to establish a rate of convergence of $\nu_n(x,z)$ to $\nu(x,z)$.  We remind the reader that $\nu_n(\cdot, z)$ is the ESD of $H_n = (Y-zI)^\ast (Y-zI)$ and $\widetilde{\nu}_n(\cdot, z)$ is the ESD of $\widetilde{H}_n = (\widetilde{Y}-zI)^\ast (\widetilde{Y}-zI)$.  

\begin{lemma} \label{lemma:rate}
For any $M_2 > M_1 \geq 0$, 
\begin{equation*}
	\sup_{M_1 \leq |z| \leq M_2} \| \nu_n(\cdot, z) - \nu(\cdot, z) \| = \sup_{x, M_1 \leq |z| \leq M_2} | \nu_n(x, z) - \nu(x,z)| = O(n^{-\delta \eta / 8}).
\end{equation*}
\end{lemma}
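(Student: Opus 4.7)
The plan is to argue via Stieltjes transforms, leveraging the cubic equation from Theorem~\ref{thm:cubic-eq}. First, by Lemma~\ref{lemma:truncation}, $L(\nu_n(\cdot,z), \widetilde{\nu}_n(\cdot,z)) = o(n^{-\eta\delta/4})$ uniformly for $|z| \leq M_2$, so it suffices to prove the same rate for the Kolmogorov distance $\|\widetilde{\nu}_n(\cdot,z) - \nu(\cdot,z)\|$ under the truncation hypothesis $|\entry[Y]{a}{a+1}{i}{j}| \leq n^\delta$. Let $\widetilde{\Delta}_n(\alpha, z)$ be the Stieltjes transform of $\widetilde{\nu}_n$, and let $\Delta(\alpha,z)$ be the Stieltjes transform of $\nu$. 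Theorem~\ref{thm:cubic-eq} gives
\begin{equation*}
	\widetilde{\Delta}_n^3 + 2\widetilde{\Delta}_n^2 + \frac{\alpha+1-|z|^2}{\alpha}\widetilde{\Delta}_n + \frac{1}{\alpha} = r_n(\alpha,z), \qquad \sup |r_n| = O(\delta_n) \text{ a.s.,}
\end{equation*}
while the identical expression vanishes at $\Delta$. Since $\Delta$, $m_2$, $m_3$ are the three roots of the cubic, one factors
\begin{equation*}
	(\widetilde{\Delta}_n - \Delta)(\widetilde{\Delta}_n - m_2)(\widetilde{\Delta}_n - m_3) = r_n(\alpha,z).
\end{equation*}

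Next, I would control $|\widetilde{\Delta}_n - \Delta|$ using Lemma~\ref{lemma:branch-bounds} together with a bootstrap for branch selection. Fix $\alpha = x + i y_n$ with $|x| \leq N$ and $z$ in an annulus. For $\alpha$ with large imaginary part, both $\widetilde{\Delta}_n$ and $\Delta$ behave like $-1/\alpha$, so $\widetilde{\Delta}_n$ is close to the branch $\Delta$ rather than to $m_2, m_3$. A continuity-in-$y$ argument (since $\widetilde{\Delta}_n$ and $\Delta$ are analytic in $\alpha$ for $\Im\alpha > 0$) then propagates this identification down to $y = y_n$. In the bulk region where $|\alpha - x_1|, |\alpha - x_2| \geq \epsilon_1$, Lemma~\ref{lemma:branch-bounds}(i)-(ii) yields $|\Delta - m_j| \geq \epsilon_0$, and hence, once $\widetilde{\Delta}_n$ is shown to be close to $\Delta$, also $|\widetilde{\Delta}_n - m_j| \geq \epsilon_0/2$, giving $|\widetilde{\Delta}_n - \Delta| = O(|r_n|) = O(\delta_n)$. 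Near the edges, Lemma~\ref{lemma:branch-bounds}(iii)-(iv) only yields $|\Delta - m_j| \gtrsim \sqrt{|\alpha-x_j|}$, so there we get the weaker $|\widetilde{\Delta}_n - \Delta| = O(\sqrt{\delta_n / |\alpha-x_j|})$.

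Finally, I would pass from the Stieltjes transform bound to the Kolmogorov bound using a standard inversion inequality (Bai-Silverstein, Theorem B.14 or Corollary B.15). That estimate bounds $\|\widetilde{\nu}_n - \nu\|_\infty$ by a multiple of
\begin{equation*}
	\int_{-A}^{A} |\widetilde{\Delta}_n(u + iy_n, z) - \Delta(u + iy_n, z)|\, du \;+\; y_n^{-1} \sup_x \int_{|u| < y_n a} |\nu(x+u,z) - \nu(x,z)|\, du,
\end{equation*}
plus tail corrections at $\pm A$. The boundary term is controlled by Lemma~\ref{lemma:nu-prop}: the H\"older-$1/2$ modulus of $\nu$ gives a contribution of order $y_n^{1/2} \sim n^{-\eta\delta/2}$, while the first integral, split into bulk and edge pieces using the bounds above, contributes $O(\delta_n/\epsilon_0^2 + \sqrt{\delta_n \cdot \epsilon_1})$. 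Choosing $\delta,\eta$ as constrained in Theorem~\ref{thm:cubic-eq} so that $\delta_n$ decays polynomially in $n$, the boundary H\"older term dominates and yields the rate $O(n^{-\eta\delta/8})$. Uniformity in $z$ on the annulus $M_1 \leq |z| \leq M_2$ follows from a polynomial net argument exactly as in the proof of Lemma~\ref{lemma:mcdiarmid-bound}, combined with the polynomial Lipschitz bound of $\widetilde{\Delta}_n$ and $\Delta$ in $z$.

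The main obstacle is the edge analysis: near $\alpha = x_1, x_2$, the cubic has a double root in the limit, so the factorization gives only a square-root improvement of $\delta_n$, and one must verify that integrating $\sqrt{\delta_n/|\alpha - x_j|}$ over a neighborhood of the edge does not spoil the rate. The bootstrap for branch selection---ensuring that $\widetilde{\Delta}_n$ is converging to $\Delta$ and not to $m_2$ or $m_3$, uniformly in $(\alpha,z)$ on the regions of interest---also requires some care but is routine given the $y \to \infty$ asymptotics.
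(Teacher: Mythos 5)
Your proposal is correct in substance and follows essentially the same route as the paper: reduce to the truncated matrices via Lemma~\ref{lemma:truncation}, factor the cubic from Theorem~\ref{thm:cubic-eq} and use the branch-separation bounds of Lemma~\ref{lemma:branch-bounds} together with a continuity/branch-selection argument to control $|\Delta_n-\Delta|$ in the bulk and near the edges $x_1,x_2$, then pass to the Kolmogorov distance through the Bai--Silverstein inversion inequality and the H\"older modulus of $\nu$ from Lemma~\ref{lemma:nu-prop}. The only slip is bookkeeping: the exponent $n^{-\delta\eta/8}$ arises from applying the square-root modulus of $\nu$ to the L\'evy distance $o(n^{-\eta\delta/4})$ coming from truncation (L\'evy closeness alone does not control Kolmogorov distance), not from the boundary term $O(\sqrt{y_n})=O(n^{-\eta\delta/2})$ in the inversion inequality, and your slightly weaker edge bound $O(\sqrt{\delta_n/|\alpha-x_j|})$ (the paper gets $O(\delta_n/\sqrt{|\alpha-x_j|})$ once the branch is pinned down) still integrates to an admissible contribution under the constraints $\delta\eta\le 1/32$, $\delta\le 1/32$.
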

\begin{proof}
We first note that it is enough to show 
\begin{equation} \label{truncate-esd-bound}
	\sup_{M_1 \leq |z| \leq M_2} \| \widetilde{\nu}_n(\cdot, z) - \nu(\cdot, z) \| = O(\sqrt{y_n}).  
\end{equation}
Indeed, by Lemma \ref{lemma:truncation}, we have that
\begin{align*}
	L(\nu_n(\cdot, z), \nu(\cdot, z)) &\leq L(\nu_n(\cdot, z), \widetilde{\nu}_n(\cdot, z)) + \| \widetilde{\nu}_n(\cdot, z) - \nu(\cdot, z) \| \\
	&\leq \| \widetilde{\nu}_n(\cdot, z) - \nu(\cdot, z) \| +  o(n^{-\eta \delta / 4}).
\end{align*}
and by Lemma \ref{lemma:nu-prop},
\begin{equation*}
	\|{\nu}_n(\cdot, z) - \nu(\cdot, z) \| \leq C\sqrt{ L(\nu_n(\cdot, z), \nu(\cdot, z)) }
\end{equation*}
uniformly for $|z| \leq M$.  

We now prove \eqref{truncate-esd-bound}.  Since $|\Delta_n(\alpha_0)| \leq (\Im \alpha_0)^{-1}$ for any fixed $\alpha_0$ with $\Im \alpha_0 > 0$, there exists a convergent subsequence of $\{\Delta_n(\alpha_0)\}_{n=1}^\infty$.  Since $\Delta$ is the only branch of \eqref{cubic-eq} that defines a Stieltjes transform, the subsequence must converge to $\Delta(\alpha_0)$.  Hence, $\Delta_n(\alpha_0) \rightarrow \Delta(\alpha_0)$ as $n\rightarrow \infty$ for any fixed $\alpha_0$ with $\Im \alpha_0 > 0$.  Let $m_1 = \Delta$ and $m_2$ and $m_3$ be the other two branches of the cubic equation \eqref{cubic-eq}.  

We remind the reader that $T$ is a bounded set and that the supports of $\nu(\cdot,z)$ are bounded for all $z \in T$.  So by \cite[Corollary B.15]{bai-book} there exists $N$ and some absolute constant $C$ such that
\begin{align*}
	& \| \widetilde{\nu}_n(\cdot,z) - \nu(\cdot,z) \| \\
	& \leq C \left( \int_{|x| \leq N} | \Delta_n(\alpha) - \Delta(\alpha) | \ud x + \frac{1}{y_n} \sup_{x} \int_{|y| \leq 2y_n} |\nu(x+y,z) - \nu(x,z)| \ud y \right) \\
	& \leq C \left( \int_{|x| \leq N} | \Delta_n(\alpha) - \Delta(\alpha) | \ud x + \sqrt{y_n} \right),
\end{align*}
where $\alpha=x+iy_n$ and the last inequality follows from Lemma \ref{lemma:nu-prop}.  So, to complete the proof we only need to estimate the integral in the last inequality above.  

We first show that for $\alpha = x + iy$, $|x| \leq N$, $|x-x_2| \geq \epsilon_1$ ($|x -x_1| \geq \epsilon_1$ if $|z| < 1$), $y \geq y_n$, $M_1 \leq |z| \leq M_2$, and all large $n$,
\begin{equation} \label{integrand-bound}
	|\Delta_n(\alpha) - \Delta(\alpha)| < \frac{C' \epsilon_0}{3} \delta_n
\end{equation}
where $\epsilon_0$ and $\epsilon_1$ come from Lemma \ref{lemma:branch-bounds} and $C'$ is a positive constant.  By Theorem \ref{thm:cubic-eq}, consider a realization where
\begin{equation*}
	|\Delta_n(\alpha) - \Delta(\alpha)| | \Delta_n(\alpha) - m_2(\alpha)| |\Delta_n(\alpha) - m_3(\alpha)| \leq C' \frac{4}{27} \epsilon_0^3 \delta_n.  
\end{equation*}
for some positive constant $C'$.  Fix $\alpha_0 = x_0 + iy_0$ with $|x_0| \leq N, y_0 > 0$, and $\min_{k=1,2} |x_0 - x_k| \geq \epsilon_1$.  Fix $z \in T$.  Choose $n$ large enough such that $|\Delta_n(\alpha_0) - \Delta(\alpha_0)| < \frac{\epsilon}{3}$.  Then for $k \in \{1,2\}$, 
\begin{equation*}
	\epsilon_0 \leq | \Delta(\alpha_0) - m_k(\alpha_0)| \leq |\Delta(\alpha_0) - \Delta_n(\alpha_0)| + |\Delta_n(\alpha_0) - m_k(\alpha_0)|
\end{equation*}
and hence 
\begin{equation*}
	\min_{k=1,2} | \Delta_n(\alpha_0) - m_k(\alpha_0)| > \frac{2 \epsilon_0}{3}.
\end{equation*}
Thus, 
\begin{equation*}
	|\Delta_n(\alpha_0) - \Delta(\alpha_0)| \leq C'\epsilon_0 \delta_n.
\end{equation*}

Next we show \eqref{integrand-bound} is true for all $y \geq y_n$, $|x| \leq N$, and $\min_{k=1,2} |x-x_k| \geq \epsilon_1$.  Suppose \eqref{integrand-bound} is false.  By continuity there exists a subsequence $n_l$, $z_l \in T$, and $\alpha_l$ with $|\Re(\alpha_l)| \leq N$ and $\Im(\alpha_l) \geq y_{n_l}$ such that 
\begin{equation*}
	|\Delta_{n_l}(\alpha_l) - \Delta(\alpha_l)| = \frac{C' \epsilon_0}{3} \delta_{n_l}.
\end{equation*}
Then 
\begin{equation*}
	|\Delta_{n_l}(\alpha_l) - \Delta(\alpha_l)| < \frac{\epsilon_0}{3}
\end{equation*}
for all $l$ greater than some $L$.  By a similar argument as above and Lemma \ref{lemma:branch-bounds}, we have
\begin{equation*}
	\min_{k=1,2}|\Delta_{n_l}(\alpha_l) - m_k(\alpha_l)| > \frac{2 \epsilon_0}{3}
\end{equation*}
for all $l > L$ and hence
\begin{equation*}
	|\Delta_{n_l}(\alpha_l) - \Delta(\alpha_l)| < \frac{C' \epsilon_0}{3} \delta_n,
\end{equation*}
a contradiction.  

Finally, for the case where $|\alpha - x_k| \leq \epsilon_1$ for $k=1$ or $2$, we apply a similar argument and Lemma \ref{lemma:branch-bounds} to obtain
\begin{equation*}
	|\Delta_n(\alpha) - \Delta(\alpha)| = O\left( \frac{\delta_n}{\sqrt{|\alpha  - x_k|}} \right) = O(\delta_n y_n^{-1/2}).
\end{equation*}

\end{proof}

\subsection{Least Singular Value Bound}

A key part of proving Lemma \ref{lemma:Y-circular} is to control the least singular value of $Y-zI$.  Equivalently, we wish to obtain control of the norm of the inverse $\|(Y-zI)^{-1}\|$.  

We will obtain a bound using the results of Tao and Vu in \cite{tao}.  We present Tao and Vu's bound on the least singular value below, which only requires a finite second moment assumption on the entries of the matrix.  

\begin{theorem}[Tao-Vu; Least singular value bound] \label{thm:tao-vu}
Let $A, C_1$ be positive constants, and let $\xi$ be a complex-valued random variable with non-zero finite variance (in particular, the second moment is finite).  Then there are positive constants $B$ and $C_2$ such that the following holds: if $N_n$ is the random matrix of order $n$ whose entries are i.i.d. copies of $\xi$, and $M$ is a deterministic matrix of order $n$ with spectral norm at most $n^{C_1}$, then,
\begin{equation} \label{tao-vu-bound}
	\Prob\left( \|(M+N_n)^{-1} \| \geq n^B \right) \leq C_2 n^{-A}.
\end{equation}
\end{theorem}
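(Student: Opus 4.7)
The plan is to convert the estimate on the inverse into a lower bound on the least singular value $\sigma_n(M+N_n)$, since $\|(M+N_n)^{-1}\| = \sigma_n(M+N_n)^{-1}$; it then suffices to show $\Prob(\sigma_n(M+N_n) \leq n^{-B}) \leq C_2 n^{-A}$ for $B = B(A, C_1)$ chosen large enough. First I would invoke the classical row-distance inequality: letting $X_1,\ldots, X_n$ be the rows of $M + N_n$ and $V_i = \mathrm{span}\{X_j : j \neq i\}$, one has $\sigma_n(M+N_n) \geq n^{-1/2}\min_i \mathrm{dist}(X_i, V_i)$. A union bound over $i$ reduces matters to showing, for each fixed $i$, that $\mathrm{dist}(X_i, V_i) \leq n^{-B+1/2}$ holds with probability at most $C_2 n^{-A-1}$.

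Next I would condition on the $n-1$ remaining rows, so that $V_i$ becomes a deterministic subspace of dimension at most $n-1$. Picking a deterministic unit vector $v\in\C^n$ orthogonal to $V_i$, we get $\mathrm{dist}(X_i, V_i) \geq |\langle X_i, v\rangle| = |c + \langle N_{n,i}, v\rangle|$, where $c = \langle M_i, v\rangle$ is a scalar and the components of $N_{n,i}$ are i.i.d. copies of $\xi$. The remaining task is a small-ball estimate: for every deterministic unit vector $v$ and every scalar $c$, show that $\sum_{k=1}^n v_k \xi_k + c$ avoids the ball of radius $n^{-B+1/2}$ about $0$ with sufficiently high probability.

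The hard part will be obtaining this small-ball bound uniformly in $v$ under the weak hypothesis of only a finite second moment. I plan to follow the Rudelson-Vershynin strategy: first truncate each $\xi_k$ at level $n^{1/2 - \epsilon}$, controlling the truncation error via Markov's inequality on the total variance; then split $v$ into a compressible part (approximately supported on $O(n^{1-\gamma})$ coordinates) and an incompressible part. For incompressible $v$, a Berry-Esseen / Esseen-type anti-concentration argument, applied after centering and rescaling, yields a small-ball probability of polynomial order; the key input is that the fluctuation of $\sum v_k \xi_k$ is spread across many coordinates, so no single sparse configuration can dominate. For compressible $v$, I would use an $\varepsilon$-net argument on the low-dimensional sphere of compressible vectors together with the coarse operator-norm bound $\|M + N_n\| \leq n^{C_1} + \|N_n\|$, where $\|N_n\|$ is polynomially bounded with high probability via Markov applied to $\tr(N_n N_n^\ast)$; this lets us handle all such $v$ simultaneously by a union bound over an exponentially large net that is absorbed by the polynomial factors.

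Finally, choosing $B$ large enough in terms of $A$ and $C_1$ balances all the error terms and yields the stated inequality. I expect the genuinely delicate step to be the incompressible small-ball bound under only a finite second moment, since the usual higher-moment Berry-Esseen is unavailable; a truncation followed by a Paley-Zygmund-style lower bound on $\var(\sum v_k \xi_k)$, combined with an Esseen inequality written in terms of the truncated characteristic function, should suffice to extract the required polynomial estimate.
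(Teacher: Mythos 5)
This statement is not proved in the paper at all: it is Theorem 2.1 of Tao and Vu's circular law paper, quoted verbatim and used as a black box (see the citation to \cite{tao}), so there is no internal proof to compare against. Judged on its own merits, your sketch has genuine gaps and, as written, cannot reach the stated conclusion. The central problem is the step where, after conditioning on the other rows, you ask for a small-ball bound for $\langle X_i,v\rangle+c$ that is uniform over \emph{all} deterministic unit vectors $v$ and scalars $c$ at the polynomially small scale $n^{-B+1/2}$ with failure probability $O(n^{-A-1})$ for arbitrary $A$. No such uniform bound exists under the hypotheses: if $\xi$ has an atom (e.g.\ Bernoulli, which is allowed since only a finite nonzero variance is assumed), then taking $v=e_1$ and $c$ aligned with the atom gives concentration probability bounded below by a constant. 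The correct argument must exploit the randomness of the normal vector $v$ itself, i.e.\ show that with high probability the normal to the span of the other $n-1$ rows has no arithmetic structure; this is exactly what Tao--Vu's inverse Littlewood--Offord theorems and their structure-versus-entropy counting accomplish, and it is the part your outline omits. Relatedly, even for a genuinely incompressible $v$, an Esseen/Berry--Esseen-type bound can never do better than roughly $\varepsilon+n^{-1/2}$, so per-row failure probabilities of order $n^{-A-1}$ for large $A$ are out of reach by that route; arbitrary polynomial gain requires ruling out finer and finer arithmetic structure in $v$, not a CLT-scale estimate.

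A second, independent gap is your treatment of compressible vectors: you assert that an exponentially large $\varepsilon$-net is ``absorbed by the polynomial factors,'' which is false --- polynomial probabilities cannot beat a net of exponential cardinality, and the situation is worse here because the hypothesis only gives $\|M\|\leq n^{C_1}$. To transfer an estimate from a net point to a nearby vector one multiplies by $\|M+N_n\|$, which is of size $n^{C_1}$ rather than $O(\sqrt{n})$, forcing a mesh of polynomially small width and hence a net of cardinality $e^{cn\log n}$; the tensorized per-point small-ball bounds available under a second-moment assumption are at best $e^{-cn}$ and do not beat this. Handling a deterministic perturbation of polynomially large norm with only a $2$nd moment is precisely the difficulty that makes the Tao--Vu theorem deep, and the standard Rudelson--Vershynin compressible/incompressible scheme (which in its usual form also needs stronger moment or subgaussian assumptions) does not survive these changes without the additional inverse Littlewood--Offord machinery. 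If you want a self-contained treatment, you should follow \cite{tao}; otherwise the honest course, which is what the paper does, is to cite the result.
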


\begin{remark} \label{rem:tao-vu}
We note that the bound in \eqref{tao-vu-bound} is independent of the matrix $M$.  In particular, this bound holds for any deterministic matrix of order $n$ with spectral norm at most $n^{C_1}$.  
\end{remark}

We will prove an analogous version of Theorem \ref{thm:tao-vu} for the matrix $Y$.  We first need the following bounds for the norm of $Y$.

\begin{lemma} \label{lemma:n-norm-bound}
We have the following bounds for the norm of $Y$.
\begin{align*}
	\| Y \| = O(n) \text{ a.s.,} \\
	\E \| Y \| = O(n).
\end{align*}
We also have that for any $1\leq a \leq m$,
\begin{equation}
	\E \| X_a \| = O(n). \label{submatrix-bound}
\end{equation}
\end{lemma}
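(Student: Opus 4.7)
The proof plan is to reduce everything to a single block $X_a$ and then bound $\|X_a\|$ by its Frobenius norm, for which a clean second-moment computation suffices.

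First I would use the block structure of $Y$. A direct computation (already noted in the paper just before Lemma \ref{lemma:girko}) shows that $Y^\ast Y$ is block diagonal with diagonal blocks $X_{a-1}^\ast X_{a-1}$, so the singular values of $Y$ are the union of the singular values of $X_1, \ldots, X_m$. In particular $\|Y\| = \max_{1 \leq a \leq m} \|X_a\|$, so it is enough to establish the bound \eqref{submatrix-bound} for each $X_a$ separately: both the almost sure and in expectation statements for $Y$ then follow by taking a maximum over $m$ blocks (a fixed constant as $n\to\infty$).

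Next I would pass from the spectral norm to the Frobenius norm using the trivial inequality $\|X_a\| \leq \|X_a\|_F$. Writing the entries of $X_a$ as $\sigma_a \xi_{ij}/\sqrt{n}$ with $\xi_{ij}$ i.i.d.\ copies of $\xi$, one has
\begin{equation*}
\|X_a\|_F^2 = \sigma_a^2 \cdot \frac{1}{n} \sum_{1 \leq i, j \leq n} |\xi_{ij}|^2 .
\end{equation*}
Taking expectation and using $\E|\xi|^2 = 1$ gives $\E \|X_a\|_F^2 = \sigma_a^2 n$, and then Jensen's inequality yields $\E \|X_a\| \leq \E \|X_a\|_F \leq \sigma_a \sqrt{n} = O(n)$, which gives \eqref{submatrix-bound} and the bound $\E\|Y\| = O(n)$.

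For the almost sure statement, I would apply the strong law of large numbers to the i.i.d.\ sequence $\{|\xi_{ij}|^2\}$, which has finite mean by the variance hypothesis. This gives $\frac{1}{n^2}\sum_{i,j}|\xi_{ij}|^2 \to 1$ almost surely, hence $\|X_a\|_F^2 = \sigma_a^2 n (1+o(1))$ almost surely and therefore $\|X_a\| \leq \|X_a\|_F = O(\sqrt{n}) = O(n)$ a.s. There is no genuine obstacle here; the only thing to keep track of is that the bound $O(n)$ is far from tight (one really gets $O(\sqrt{n})$) but this coarser form is exactly what is needed to fit into the hypothesis $\|M\| \leq n^{C_1}$ of Theorem \ref{thm:tao-vu} in the subsequent application.
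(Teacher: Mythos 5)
Your proposal is correct and follows essentially the same route as the paper: reduce to the block-diagonal structure of $Y^\ast Y$, bound the operator norm by a trace-type quantity of the squared entries, and invoke the law of large numbers (resp.\ a second-moment computation) for the almost sure (resp.\ expectation) bound. The only difference is cosmetic: the paper passes from $\|Y\|$ to $\tr(Y^\ast Y)$ via $s \leq s^2+1$, yielding $O(n)$ directly, while your use of $\|X_a\| \leq \|X_a\|_F$ gives the sharper $O(\sqrt{n})$, which of course implies the stated $O(n)$.
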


\begin{proof}
We note that 
\begin{equation*}
	Y^\ast Y = \left( \begin{array}{cccc}
                          X_m^\ast X_m &      &  &         0 \\
                              &  X_1 ^\ast X_1 &  &          \\      
                              &      &  \ddots  &         \\
                           0  &      &          & X_{m-1}^\ast X_{m-1}
                  
	\end{array}\right),
\end{equation*}
and hence the singular values of $Y$ are the the union of the singular values of $X_k$ for $1 \leq k \leq m$.  Let $s_1, \ldots s_{mn}$ denote the singular values of $Y$.  Then
\begin{align*}
	\frac{1}{mn} \| Y \| &\leq \frac{1}{mn} \sum_{j=1}^{mn} s_j \leq \frac{1}{mn}  \sum_{j=1}^{mn} s^2_j + 1 \\
		& \leq \frac{1}{mn} \tr{Y^\ast Y} +1 = \frac{1}{mn} \sum_{k=1}^m \sum_{1 \leq i, j \leq n} \left|\left(X_{k} \right)_{ij} \right|^2 + 1 \longrightarrow 2 \text{ a.s.}
\end{align*}
as $n \rightarrow \infty$ by the law of large numbers.  The same argument shows that 
\begin{equation*}
	\frac{1}{mn} \E \| Y \| \leq \frac{1}{mn} \sum_{k=1}^m \sum_{1 \leq i, j \leq n} \E \left|\left(X_{k} \right)_{ij} \right|^2 + 1 = 2.
\end{equation*}
A similar argument verifies \eqref{submatrix-bound}.

\end{proof}

\begin{theorem}[Least singular value bound for $Y$] \label{thm:singular-value}
Let $Y$ be the $(mn) \times (mn)$ matrix defined in \eqref{def-Y} and let $A$ be a positive constant.  Then, under the hypothesis of Theorem \ref{thm:main}, there exists positive constants $B$ and $C$ (depending on both $A$ and $m$) such that 
\begin{equation*}
	\Prob \left( \|(Y-zI)^{-1} \| \geq n^B \right) \leq C n^{-A}
\end{equation*}
uniformly for $|z| \leq M_2$.
\end{theorem}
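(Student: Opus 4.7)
The plan is to reduce the least singular value bound for $Y-zI$ to Theorem \ref{thm:tao-vu} applied to the individual factors $X_k$, via the algebraic identity
\begin{equation*}
Y^m - z^m I = (Y - zI)\, P(Y), \qquad P(Y) := Y^{m-1} + z Y^{m-2} + \cdots + z^{m-1} I.
\end{equation*}
Since $P(Y)$ and $Y - zI$ are polynomials in $Y$ they commute, so $(Y-zI)^{-1} = P(Y)(Y^m - z^m I)^{-1}$ whenever both inverses exist, giving
\begin{equation*}
\|(Y-zI)^{-1}\| \leq \|P(Y)\| \cdot \|(Y^m - z^m I)^{-1}\|.
\end{equation*}

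For the polynomial factor, Lemma \ref{lemma:n-norm-bound} and Markov's inequality give $\|Y\| \leq n^{1+o(1)}$ on an event of probability $\geq 1 - O(n^{-A})$; together with $|z| \leq M_2$ this forces $\|P(Y)\| \leq m(1+M_2)^{m-1}\|Y\|^{m-1} = O(n^{m-1+o(1)})$ on the same event. For the second factor, the computation at the start of the proof of Theorem \ref{thm:main} gives $Y^m = \mathrm{diag}(Y_1,\ldots,Y_m)$ with $Y_k := X_k X_{k+1}\cdots X_{k-1}$ (cyclic indices), and therefore
\begin{equation*}
\|(Y^m - z^m I)^{-1}\| = \max_{1\leq k \leq m}\|(Y_k - z^m I)^{-1}\|.
\end{equation*}

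For each $k$, I would write $Y_k = X_k W_k$ with $W_k := X_{k+1} \cdots X_{k-1}$ independent of $X_k$, and factor $Y_k - z^m I = (X_k - z^m W_k^{-1})W_k$ to obtain $\|(Y_k - z^m I)^{-1}\| \leq \|W_k^{-1}\| \cdot \|(X_k - z^m W_k^{-1})^{-1}\|$. A first application of Theorem \ref{thm:tao-vu} with $M = 0$ to each of the $m-1$ matrices $\sqrt{n}\,X_j$ entering $W_k$ (each of which has i.i.d. entries distributed as $\xi$) produces $\|X_j^{-1}\| \leq \sqrt{n}\,n^{B_0}$ with probability $\geq 1 - C_2 n^{-A}$, and hence $\|W_k^{-1}\| \leq n^{(m-1)(B_0 + 1/2)}$ with probability $\geq 1 - (m-1)C_2 n^{-A}$. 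Conditionally on such a good realization of $W_k$, the matrix $\sqrt{n}\,X_k$ is still independent of $W_k$ with i.i.d. copies of $\xi$ as entries, while the deterministic perturbation $-z^m \sqrt{n}\, W_k^{-1}$ has spectral norm bounded polynomially in $n$; a second application of Theorem \ref{thm:tao-vu} then yields $\|(X_k - z^m W_k^{-1})^{-1}\| \leq \sqrt{n}\,n^{B_1}$ with conditional probability $\geq 1 - C_2 n^{-A}$. Union-bounding over $k$ and combining with the estimate for $\|P(Y)\|$ gives $\|(Y-zI)^{-1}\| \leq n^B$ with failure probability $O(n^{-A})$ for some $B = B(A,m,M_2)$.

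The main obstacle is the uniformity requirement in $|z| \leq M_2$, since Theorem \ref{thm:tao-vu} is pointwise in the deterministic perturbation and hence in $z$. This is handled by a standard discretization: apply the pointwise bound at all points of an $n^{-D}$-net of $\{|z| \leq M_2\}$, chosen so the net has only polynomially many points, then transfer to the full disk using the $1$-Lipschitz dependence of $\sigma_{\min}(Y - zI)$ on $z$, together with the polynomial bound on $\|Y\|$ which controls how much the relevant quantities can vary between net points.
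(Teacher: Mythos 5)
Your argument is correct and rests on the same engine as the paper's proof --- a double application of Theorem \ref{thm:tao-vu} (first with $M=0$ to invert the $m-1$ "other" factors, then, after conditioning on those factors and using the uniformity of Remark \ref{rem:tao-vu}, to the remaining i.i.d. matrix shifted by the conditionally deterministic perturbation), combined with a Markov-inequality norm bound from Lemma \ref{lemma:n-norm-bound} for the polynomial prefactor. Where you differ is the reduction step: the paper inverts $Y-zI$ blockwise, asserting that each block $(Y-zI)^{-1}_{a,b}$ has the form $z^\kappa X_{j_1}\cdots X_{j_l}(X_{i_1}\cdots X_{i_q}-z^r)^{-1}$, and then factors $(X_{i_1}\cdots X_{i_q}-z^r)^{-1}=X_{i_q}^{-1}\cdots X_{i_2}^{-1}(X_{i_1}-z^rX_{i_q}^{-1}\cdots X_{i_2}^{-1})^{-1}$; you instead use $Y^m-z^mI=(Y-zI)P(Y)$, hence $(Y-zI)^{-1}=P(Y)(Y^m-z^mI)^{-1}$, together with the block-diagonality of $Y^m$, which lands you at exactly the same factorization $(X_kW_k-z^mI)^{-1}=W_k^{-1}(X_k-z^mW_k^{-1})^{-1}$. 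Your route buys a cleaner justification of the reduction (the paper's block-inversion formula is left as "a simple computation"), at the cost of the extra factor $\|P(Y)\|$, which you control the same way the paper controls its prefactors. Two small remarks: the phrase "uniformly for $|z|\leq M_2$" in the statement only means that $B$ and $C$ do not depend on $z$, which both proofs get for free from Remark \ref{rem:tao-vu} since $z$ enters only through the deterministic perturbation; your $n^{-D}$-net plus the $1$-Lipschitz dependence of $\sigma_{\min}(Y-zI)$ on $z$ is not needed for the stated result, though it harmlessly yields the stronger bound on the probability that the supremum over $|z|\leq M_2$ exceeds $n^B$. Also, first-moment Markov from $\E\|Y\|=O(n)$ gives $\|Y\|\leq n^{A+2}$ (not $n^{1+o(1)}$) with failure probability $O(n^{-A})$; this changes nothing since $B$ may depend on $A$ and $m$.
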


\begin{proof}
We remind the reader that $(Y-zI)^{-1}$ is an $(mn) \times (mn)$ matrix and again refer to the $m^2$ blocks $(Y-zI)^{-1}_{a,b}$ each of size $n \times n$.  A simple computation reveals, that (when invertible) $(Y-zI)^{-1}_{a,b}$ has the form
\begin{equation*}
	z^\kappa X_{j_1} \cdots X_{j_l} \left(X_{i_1} \cdots X_{i_q} - z^r \right)^{-1}
\end{equation*}
where $\kappa, l, q, r$ are nonnegative integers no bigger than $m$, the variables $\kappa, l, q, r, j_1, \ldots j_l, i_1, \ldots, i_q$ depend only on $a$ and $b$, and the indices $i_1,\ldots,i_q$ are all distinct.  

By the definition of the norm, we have that
\begin{equation*}
	\| (Y-zI)^{-1} \| \leq C_m \max_{1 \leq a, b \leq m}\| (Y-zI)^{-1}_{a,b}\| \leq C_m \sum_{1 \leq a,b \leq m} \| (Y-zI)^{-1}_{a,b}\| 
\end{equation*}
where $C_m$ is a constant that depends only on $m$.  Thus, it is enough to show that given a positive constant $A$, there exists $B$ and $C$ such that 
\begin{equation*}
	\Prob \left( \| z^\kappa X_{j_1} \cdots X_{j_l} \left(X_{i_1} \cdots X_{i_q} - z^r \right)^{-1} \| \geq n^B \right) \leq C n^{-A}
\end{equation*}
uniformly for $|z| \leq M_2$.  

So we have,
\begin{align*}
	&\Prob \left( \| z^\kappa X_{j_1} \cdots X_{j_l} \left(X_{i_1} \cdots X_{i_q} - z^r \right)^{-1} \| \geq n^B \right)  \\
	& \leq  m \Prob \left( \| X_1 \| \geq n^{B/(m+2)} \right) + \Prob \left( \|\left(X_{i_1} \cdots X_{i_q} - z^r \right)^{-1}\| \geq n^{B/(m+2)} \right)
\end{align*}
for $|z| \leq M_2$ and $n$ large.  The first term can be estimated by Markov's inequality
\begin{equation*}
	\Prob \left( \| X_1 \| \geq n^{B/(m+2)} \right) \leq \frac{ \E\|X_1\| }{ n^{B/(m+2)} } = O(n^{-B/(m+2)+1})
\end{equation*}
since $\E\|X_1\| = O(n)$ by Lemma \ref{lemma:n-norm-bound}. Therefore, this term is order $n^{-A}$ by taking $B > (m+2)(A+1)$.  So, it is now enough to show that given a positive constant $A$, there exists $B$ and $C$ such that 
\begin{equation*}
	\Prob \left( \| \left(X_{i_1} \cdots X_{i_q} - z^r \right)^{-1} \| \geq n^B \right) \leq C n^{-A}.
\end{equation*} 

We note that,
\begin{equation*}
	\left(X_{i_1} \cdots X_{i_q} - z^r \right)^{-1} = X_{i_q}^{-1} \cdots X_{i_2}^{-1} \left( X_{i_1} - z^r X_{i_q}^{-1} \cdots X_{i_2}^{-1} \right)^{-1}.
\end{equation*}
By Theorem \ref{thm:tao-vu} there exists positive constants $B$ and $C$ such that
\begin{align} \label{product-bound}
	\Prob \left( \| X_{i_q}^{-1} \cdots X_{i_2}^{-1} \| \geq n^B \right) \leq m\Prob \left( \|X_1^{-1} \| \geq n^{B/m} \right) \leq C n^{-A}.
\end{align}
Thus, we only need to show that given $A$ there exists $B$ and $C$ such that
\begin{equation*}
	\Prob \left( \left( X_{i_1} - z^r X_{i_q}^{-1} \cdots X_{i_2}^{-1} \right)^{-1} \geq n^{B} \right) \leq C n^{-A}.
\end{equation*}

We then have that
\begin{align*}
	& \Prob \left( \left( X_{i_1} - z^r X_{i_q}^{-1} \cdots X_{i_2}^{-1} \right)^{-1} \geq n^{B} \right) \\
	& = \Prob \left( \left( X_{i_1} - z^r X_{i_q}^{-1} \cdots X_{i_2}^{-1} \right)^{-1} \geq n^{B} \Bigm\vert \|X_{i_q}^{-1} \cdots X_{i_2}^{-1}\| \leq n^{C_1} \right) \\
	& \qquad \qquad \times \Prob \left( \|X_{i_q}^{-1} \cdots X_{i_2}^{-1}\| \leq n^{C_1} \right)  \\
	& \qquad + \Prob \left( \left( X_{i_1} - z^r X_{i_q}^{-1} \cdots X_{i_2}^{-1} \right)^{-1} \geq n^{B} \Bigm\vert \|X_{i_q}^{-1} \cdots X_{i_2}^{-1}\| \geq n^{C_1} \right) \\
	&\qquad \qquad \times \Prob \left( \|X_{i_q}^{-1} \cdots X_{i_2}^{-1}\| \geq n^{C_1} \right) \\
	& \leq C n^{-A}
\end{align*}
where the first term is controlled by Theorem \ref{thm:tao-vu} (in particular, see Remark \ref{rem:tao-vu}) and the second term is estimated as in \eqref{product-bound}.  This completes the proof of the Theorem.  
\end{proof}

\subsection{Proof of Lemma \ref{lemma:Y-circular}}

\begin{proof}[Proof of Lemma \ref{lemma:Y-circular}]
In order to finish the proof of Lemma \ref{lemma:Y-circular} we need to show \eqref{task-reduction} holds.  By integration by parts, we have
\begin{align*}
	& \left| \int_{z \in T} (g_n(s,t) - g(s,t))e^{isu+itv} \ud t \ud s \right| \\
	& = \Bigg| - \int_{z \in T} iu \tau(s,t)\ud t \ud s + \int_{|t| \leq A^3} (\tau(A,t) - \tau(-A,t)) \ud t \\
	& \qquad -\int_{|t| \leq 1+\epsilon} \left(\tau(\sqrt{(1+\epsilon)^2 - t^2}, t) - \tau(-\sqrt{(1+\epsilon)^2-t^2}, t)\right) \ud t \\
	& \qquad +\int_{|t| \leq 1-\epsilon} \left(\tau(\sqrt{(1-\epsilon)^2 - t^2}, t) - \tau(-\sqrt{(1-\epsilon)^2-t^2}, t)\right) \ud t \Bigg|,
\end{align*}
where 
\begin{equation*}
	\tau(s,t) = e^{ius+ivt} \int_{0}^{\infty} \ln x (\nu_n(\ud x, z) - \nu(\ud x, z)).
\end{equation*}

Let $\epsilon_n = e^{-n^{\eta \delta / 16}}$.  By Theorem \ref{thm:singular-value} and the Borel-Cantelli lemma, with probability $1$, 
\begin{equation*}
	\lim_{n \rightarrow \infty} \int_{z \in T} \left| \int_0^{\epsilon_n} \ln x \nu_n(\ud x, z) \right| \ud t \ud s = 0.
\end{equation*}
By Lemma \ref{lemma:nu-prop}, 
\begin{equation*}
	\lim_{n \rightarrow \infty} \int_{z \in T} \left| \int_0^{\epsilon_n} \ln x \nu(\ud x, z) \right| \ud t \ud s = 0.
\end{equation*}
By Lemma \ref{lemma:n-norm-bound}, there exists $\kappa > 0$ such that the support of $\nu_n(\cdot, z)$ lies in $[0, \kappa n^2]$ for all $z \in T$.  Thus, by Lemma \ref{lemma:rate}
\begin{align*}
	& \int_{z \in T} \left| \int_{\epsilon_n}^\infty \ln x (\nu_n(\ud x, z) - \nu(\ud x, z)) \right| \ud t \ud s \\
	&  = \int_{z \in T} \left| \int_{\epsilon_n}^{\kappa n^2} \ln x(\nu_n(\ud x, z) - \nu(\ud x, z)) \right| \ud t \ud s \\
	&  \leq C \left[|\ln(\epsilon_n)| + \ln (\kappa n^2) \right] \max_{z \in T} \| \nu_n(\cdot, z) - \nu(\cdot, z) \| \longrightarrow 0 \qquad \text{a.s.}
\end{align*}
Therefore, with probability $1$,
\begin{equation*}
	\lim_{n \rightarrow \infty} iu\int_{z \in T} \tau(s,t) \ud s \ud t = 0.
\end{equation*}
In a similar fashion, we can show that the boundary terms satisfy the following
\begin{align*}
	\lim_{n \rightarrow \infty}& \int_{|t|\leq A^3} \tau(\pm A, t) \ud t = 0 \qquad \text{a.s.,} \\
	\lim_{n \rightarrow \infty}& \int_{|t| \leq 1+\epsilon} \tau(\pm \sqrt{(1+\epsilon)^2-t^2},t) \ud t = 0 \qquad \text{a.s.,}
\end{align*}
and
\begin{align*}
	\lim_{n \rightarrow \infty}& \int_{|t| \leq 1-\epsilon} \tau(\pm \sqrt{(1-\epsilon)^2-t^2}, t) \ud t = 0 \qquad \text{a.s.}
\end{align*}
The proof of Lemma \ref{lemma:Y-circular} is complete.
\end{proof}

\begin{remark}
After we finished our paper, we learned about a very recent preprint \cite{gotzetikh} where F.G\"{o}tze and A. Tikhomirov proved the convergence of the 
expected spectral distribution $\E \mu_X$ to the limit defined by (\ref{plotnost})  under the assumption that the matrix entries are mutually independent  
centered complex random variables with variance one.  Our approach is different from the one used in \cite{gotzetikh}.  We are grateful to Z. Burda, 
T. Tao and A. Tikhomirov for useful comments regarding the results of the paper.  In addition, we are grateful to unanimous referees for valuable and 
constructive criticism regarding the proofs of Theorem 15 and Lemma 19, and for bringing to our attention the reference \cite{girko-vlad} where 
a similar result was obtained for $m=2.$
\end{remark}

\end{document}